\newtheorem{lemma}{Lemma}
\newtheorem{proposition}{Proposition}
\newtheorem{remark}{Remark}
\newtheorem{theorem}{Theorem}
\begin{document}

\title{On the B-subdifferential of proximal operators of affine-constrained $\ell_1$ regularizer}

\author{Xudong Li\thanks{School of Data Science, Fudan University, Shanghai, P.R. China ({\tt lixudong@fudan.edu.cn}).}, \qquad Meixia Lin\thanks{(Corresponding author) Engineering Systems and Design, Singapore University of Technology and Design ({\tt meixia\_lin@sutd.edu.sg}).}, \qquad 
	Kim-Chuan Toh\thanks{Department of Mathematics and Institute of Operations Research and Analytics, National University of Singapore, Singapore ({\tt mattohkc@nus.edu.sg}).}.}
\maketitle
\date

\begin{abstract}
In this work, we study the affine-constrained $\ell_1$ regularizers, which frequently arise in statistical and machine learning problems across a variety of applications, including microbiome compositional data analysis and sparse subspace clustering. With the aim of developing scalable second-order methods for solving optimization problems involving such regularizers, we analyze the associated proximal mapping and characterize its generalized differentiability, with a focus on its B-subdifferential. The revealed structured sparsity in the B-subdifferential enables us to design efficient algorithms within the proximal point framework. Extensive numerical experiments on real applications, including comparisons with state-of-the-art solvers, further demonstrate the superior performance of our approach. Our findings provide new insights into the sensitivity and stability properties of affine-constrained nonsmooth regularizers, and contribute to  the development of fast second-order methods for a class of structured, constrained sparse learning problems.
\end{abstract}

\section{Introduction}

We consider the function $q_{\mu,c}:\mathbb{R}^n \rightarrow \mathbb{R}$ defined by 
\begin{equation}
	q_{\mu,c}(x) := \|x\|_1 + \delta_{\mu,c}(x), \label{eq: def_q}
\end{equation}
where $\delta_{\mu,c}(x)$ is the indicator function of the affine set $C_{\mu,c} = \{ x\in \mathbb{R}^n \mid \mu^\top x = c \}$, taking the value of $0$ if $x\in C_{\mu,c}$, and $ +\infty$ otherwise. Here, $\mu\in \mathbb{R}^n$ is a fixed non-zero vector and $c\in \mathbb{R}$ is a constant scalar. This function $q_{\mu,c}(\cdot)$ combines the $\ell_1$-norm with an affine constraint, giving rise to the affine-constrained lasso penalty. It naturally appears in optimization problems of the form
\begin{equation}
	\min_{x \in \mathbb{R}^n} \  \left\{ F(x):=f(Ax) + \lambda q_{\mu,c}(x)\right\},
	\label{eq: l1_linear_constraint_obj}
\end{equation}
where $f:\mathbb{R}^m \rightarrow \mathbb{R}$ is a convex loss function, $A\in \mathbb{R}^{m\times n}$ is a data matrix, and $\lambda>0$ is a regularization parameter. Such problems arise in a wide range of applications where one seeks a sparse solution subject to an affine constraint, which often reflects intrinsic structural requirements of the data, such as compositionality, linear relations or conservation laws.

One representative example is microbiome compositional data analysis, where each sample consists of relative abundances that sum to one, imposing structural constraints that require specialized regression methods. A well-established approach is the log-contrast model \cite{aitchison1984log}, where a log transformation is applied to the compositional covariates to enable interpretable linear regression analysis. Specifically, let $y \in \mathbb{R}^m$ be the response vector, and $Z \in \mathbb{R}^{m \times n}$ be the covariate matrix with each row lying in the positive probability simplex. By defining $A = \log Z \in \mathbb{R}^{m \times n}$ elementwise, the log-contrast model takes the form:
\[
b = Ax + \varepsilon, \quad \text{subject to } e^\top x = 0,
\]
where $x\in \mathbb{R}^n$ denotes the regression coefficients, $\varepsilon\sim{\cal N}(0,\sigma^2 I_m)$ is the noise vector, and $e$ is the vector of all ones. 
In high-dimensional settings, several works \cite{lin2014variable,shi2016regression,susin2020variable} have proposed imposing sparsity on regression coefficients through $\ell_1$-regularization to enable variable selection, leading to the constrained nonsmooth problem:
\begin{equation}
	\min_{x\in \mathbb{R}^n} \  \left\{\frac{1}{2} \|b - Ax\|^2 + \lambda \|x\|_1 \ \middle\vert\  e^\top x = 0\right\}, \label{eq: micro_ls}
\end{equation}
which fits into the general formulation \eqref{eq: l1_linear_constraint_obj} with $\mu = e$, $c=0$ and a least squares loss function. This model has been shown to effectively identify relevant microbial features while respecting the compositional nature of the data. Building on this framework, Lu et al. \cite{lu2019generalized} extended the methodology to generalized linear models, including log-contrast logistic regression problem:
\begin{equation}
	\min_{x\in \mathbb{R}^n} \ \left\{  \sum_{i=1}^m \log\left(1 + \exp(-b_i a_i^\top x)\right) + \lambda \|x\|_1 \ \middle\vert\  e^\top x = 0 \right\}, \label{eq: micro_logistic}
\end{equation}
where $a_i^\top$ denotes the $i$-th row of the matrix $A$ and $b_i \in \{-1, 1\}$ are binary responses. This problem fits into the general formulation \eqref{eq: l1_linear_constraint_obj} by setting $\mu = e$, $c=0$, and taking the loss function $f(\cdot)$ as the logistic loss.

Beyond compositional models, the affine-constrained $\ell_1$ regularizer \eqref{eq: def_q} also plays a key role in sparse subspace clustering. This widely used approach in unsupervised learning represents each data point as a sparse linear combination of others, under the assumption that the data lie near a union of low-dimensional affine subspaces. An essential step in many modern frameworks \cite{vidal2009sparse,elhamifar2013sparse,traganitis2017sketched,pourkamali2019large,abdolali2019scalable,pourkamali2020efficient} is to solve an affinely constrained $\ell_1$ regularized least squares problem. Specifically, given a data matrix $A\in \mathbb{R}^{m\times n}$ whose columns are data points in $\mathbb{R}^m$, one seeks a coefficient matrix $X$ that yields sparse representations of all points, leading to the optimization problem:
\begin{equation}
	\min_{X \in \mathbb{R}^{n \times n}}\ \left\{ 
	\frac{1}{2}\|A-AX\|_F^2 +\lambda \|X\|_1 \ \middle \vert  \ {\rm Diag}(X) = 0,\, X^\top e = e
	\right\},\label{eq: sub_cluster_matrix}
\end{equation}
where $\|X\|_1 := \sum_{i=1}^n \sum_{j=1}^n |X_{ij}|$.
This problem decouples into column-wise subproblems of the form:
\begin{equation}
	\min_{x\in \mathbb{R}^n}  \left\{ \frac{1}{2}\| Ax-a\|^2 + \lambda \|x\|_1 \ \middle \vert\ e^\top x = 1 \right\},\label{eq: sub_cluster_vector}
\end{equation}
where $a$ is a fixed column of $A$. Here,  although ${\rm Diag}(X)=0$ implies each column essentially lies in $ \mathbb{R}^{n-1}$, we write $x\in \mathbb{R}^n$ for notational convenience. This subproblem matches the general model \eqref{eq: l1_linear_constraint_obj} with $f(\cdot)$ as the least squares loss, $\mu = e$, and $c=1$.

Numerous algorithms have been proposed to solve problems of the form \eqref{eq: l1_linear_constraint_obj}. Zhou and Lange \cite{zhou2013path} introduced a path-following algorithm for the constrained least squares problem without the $\ell_1$ regularization, where they replaced the constraint by an exact penalty formulation. Later, Lin et al. \cite{lin2014variable} tackled the constrained lasso problem with a least squares loss in the log-contrast setting, using the alternating direction method of multipliers (ADMM), with coordinate descent employed for solving subproblems. Subsequent work by Gaines et al. \cite{gaines2018algorithms} explored methods such as quadratic programming, ADMM, and path-following algorithms, to address the same class of problems. Moving beyond least squares, Lu et al. \cite{lu2019generalized} studied the generalized linear models under affine constraints via an accelerated proximal gradient method, while James et al. \cite{james2020penalized} proposed the Penalized and Constrained optimization method (PaC), a modified coordinate descent scheme for computing solution paths of problem \eqref{eq: l1_linear_constraint_obj} with twice-differentiable loss functions. More recently, Tran et al. \cite{tran2024fast} addressed the equality-constrained lasso problem by first performing variable screening using solutions from unconstrained lasso problems, and then refining the results with a hybrid ADMM and Newton–Raphson method. While these methods provide valuable insights and have been applied successfully in various settings, their computational efficiency and scalability are rather limited, particularly in high-dimensional regimes.
This motivates the development of more scalable approaches tailored to sparse optimization problems of the form \eqref{eq: l1_linear_constraint_obj}.

To this end, we investigate the application of the proximal point algorithm (PPA), which has recently been proven to be an effective tool for solving large-scale nonsmooth optimization. However, the practical use of PPA relies on efficiently solving its subproblems to a sufficient level of precision. Inspired by the work of Li et al. \cite{li2018highly}, we develop an efficient second-order semismooth Newton framework that leverages the ``sparse plus low-rank'' decomposition of the subdifferential of a non-standard proximal mapping. Central to our algorithm is to characterize the generalized differentiability of the proximal mapping associated with the affine-constrained $\ell_1$ regularizer $\lambda q_{\mu,c}$:
\begin{equation}
	{\rm Prox}_{\lambda q_{\mu,c}}(x) = \underset{z\in \mathbb{R}^n}{\arg\min} \left\{  \frac{1}{2}\|z-x\|^2+ \lambda  \|z\|_1 \mid \mu^\top z = c \right\}. \label{eq: def_prox_qx_C}
\end{equation} 
The exact solution to \eqref{eq: def_prox_qx_C} is known only in the special case $\mu=e$ and $c=1$, which can be computed in ${\cal O}(n \log n)$ time via a one-dimensional root-finding procedure \cite[Algorithm 2]{pourkamali2020efficient}. However, the analytical form of its 
B-subdifferential has not been established, even for this special case. We adapt the approach in \cite{pourkamali2020efficient} to develop an explicit method for computing ${\rm Prox}_{\lambda q_{\mu,c}}(\cdot)$ for arbitrary $\mu$ and $c$, and further provide the first complete characterization of its B-subdifferential. These results thereby enable a fast, globally convergent Newton-type algorithm for a broad class of affine-constrained $\ell_1$-regularized problems.

The rest of the paper is organized as follows.
We begin in Section~\ref{sec: prox} with the computation of the proximal mapping ${\rm Prox}_{\lambda q_{\mu,c}}(\cdot)$, 
followed in Section~\ref{sec: Bdiff} by a characterization of its B-subdifferential. 
Based on the established results, Section~\ref{sec: alg} introduces a double-loop algorithm for affine-constrained sparse optimization, 
and Section~\ref{sec: exp} presents numerical experiments on representative application problems, comparing our method with existing solvers. 
Finally, Section~\ref{sec: conclusion} concludes the paper.

\medskip
\noindent \textbf{Notation.} Denote $[n] = \{1,2,\cdots,n\}$. We use 
$\operatorname{sign}(x)$ to denote the sign of $x$, i.e., $\operatorname{sign}(x) := 1$ if $x > 0$, $0$ if $x = 0$, and $-1$ if $x < 0$. We also use $(x)_+ := \max\{x, 0\}$ to denote the positive part of $x$. For an index set $J\subseteq [n]$, we use $|J|$ to denote the cardinality of $J$.  For a given set $D\subseteq \mathbb{R}$, let ${\rm 1}_D(x)$ denote the function that equals $1$ if $x\in D$ and $0$ otherwise. 

\section{Computation of the proximal mapping}\label{sec: prox}
We assume, without loss of generality, that $\mu_i\neq  0$ for all $i\in [n]$. This assumption is justified by the fact that, for any non-zero $\mu\in \mathbb{R}^n$ with $I =\{i\in [n]\mid \mu_i \neq 0\}$, the proximal mapping ${\rm Prox}_{\lambda q_{\mu,c}}(x)$ decomposes as:
\begin{equation*}
	\left({\rm Prox}_{\lambda q_{\mu,c}}(x)\right)_{I}={\rm Prox}_{\lambda \|\cdot\|_1+\delta_{\mu_I,c}(\cdot)}(x_I),\qquad \left({\rm Prox}_{\lambda q_{\mu,c}}(x)\right)_{I^{\complement}}={\rm Prox}_{\lambda \|\cdot\|_1}(x_{I^{\complement}}),
\end{equation*}
where $I^{\complement}$ is the complement of $I$ in $[n]$, and $\delta_{\mu_I,c}(\cdot)$ is the indicator function of the affine set $C_{\mu_I,c}= \{z\in \mathbb{R}^{|I|}\mid \mu_I^\top z=c\}$. This shows that the coordinates corresponding to indices with $\mu_i = 0$ are unaffected by the affine constraint and can be treated separately using the standard soft-thresholding operator.

We evaluate the proximal operator \eqref{eq: def_prox_qx_C} via its optimality condition, whereby the optimization problem in $\mathbb{R}^n$ is reduced to a one-dimensional root-finding task through the introduction of a scalar dual variable. This approach was previously considered in \cite{pourkamali2020efficient} for the special case $\mu = e$ and $c = 1$. For completeness, and to prepare for our analysis of the B-subdifferential of ${\rm Prox}_{\lambda q_{\mu,c}}(\cdot)$, we extend the result to arbitrary $\mu \in \mathbb{R}^n$ and $ c \in \mathbb{R}$. The extension is conceptually straightforward but serves as a useful basis for the subsequent analysis.

We begin by presenting a characterization of the optimality condition for evaluating the proximal operator in \eqref{eq: def_prox_qx_C}.
\begin{proposition}
	A necessary and sufficient optimality condition for \eqref{eq: def_prox_qx_C} is the existence of a dual multiplier $w\in \mathbb{R}$ such that
	\begin{equation}\label{eq:prox_qx_C_opt}
		f(x, w) := \mu^\top {\rm Prox}_{\lambda \|\cdot\|_1}(x-w \mu) =c. 	
	\end{equation}
	Once such a scalar $w$ is identified, then
	\begin{equation}
		{\rm Prox}_{\lambda q_{\mu,c}}(x) = {\rm Prox}_{\lambda \|\cdot\|_1}(x-w \mu). \label{eq: prox_qx_C}
	\end{equation}
\end{proposition}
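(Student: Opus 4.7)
The plan is to recognize \eqref{eq: def_prox_qx_C} as a convex problem with a single linear equality constraint and analyze it via Lagrangian duality. Specifically, I would introduce a scalar multiplier $w\in\mathbb{R}$ for the constraint $\mu^\top z = c$ and write the Lagrangian
\begin{equation*}
L(z,w) = \tfrac{1}{2}\|z-x\|^2 + \lambda\|z\|_1 + w(\mu^\top z - c).
\end{equation*}
Since $\mu \neq 0$, the point $z_0 = (c/\|\mu\|^2)\,\mu$ lies in $C_{\mu,c}$ and gives a finite objective, so the problem is feasible; combined with the fact that the only constraint is affine and the objective is proper, closed, and convex, standard convex analysis (e.g., Rockafellar's strong duality for problems with linear constraints and finite primal value) guarantees the existence of a KKT pair, and KKT is both necessary and sufficient for optimality.

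Next I would unpack the KKT conditions. Primal feasibility gives $\mu^\top z = c$. Stationarity in $z$ amounts to
\begin{equation*}
0 \in z - x + \lambda\,\partial\|z\|_1 + w\mu,
\end{equation*}
which is equivalent to $(x-w\mu) - z \in \lambda\,\partial\|z\|_1$, i.e., by the definition of the proximal operator of $\lambda\|\cdot\|_1$,
\begin{equation*}
z = {\rm Prox}_{\lambda\|\cdot\|_1}(x - w\mu).
\end{equation*}
Substituting this expression for $z$ into the primal feasibility equation $\mu^\top z = c$ produces exactly the scalar equation $f(x,w) = \mu^\top {\rm Prox}_{\lambda\|\cdot\|_1}(x-w\mu) = c$, establishing \eqref{eq:prox_qx_C_opt}, and reading off the stationarity formula establishes \eqref{eq: prox_qx_C}.

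For the converse direction, given any $w\in\mathbb{R}$ satisfying \eqref{eq:prox_qx_C_opt}, the point $z^\star := {\rm Prox}_{\lambda\|\cdot\|_1}(x-w\mu)$ satisfies both primal feasibility (by definition of $w$) and the stationarity inclusion above (by definition of the proximal operator), hence $(z^\star,w)$ is a KKT pair; by convexity of the primal, $z^\star$ is optimal, yielding \eqref{eq: prox_qx_C}.

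The only real subtlety is justifying that strong duality (equivalently, existence of a Lagrange multiplier $w$) holds so that the KKT system is nonvacuous; this is where I would be most careful to cite the linear-constraint qualification rather than, say, Slater's condition, since the constraint is an equality. Once that point is secured, the rest is a direct translation between the prox inclusion and its fixed-point form, with no computation beyond the standard resolvent identity for $\lambda\|\cdot\|_1$.
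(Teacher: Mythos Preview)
Your proposal is correct and follows essentially the same approach as the paper: both derive the KKT system for the linearly constrained convex problem (the paper cites \cite[Corollary 28.3.1]{rockafellar1997convex} where you invoke the linear-constraint qualification), rewrite the stationarity inclusion as $z = {\rm Prox}_{\lambda\|\cdot\|_1}(x - w\mu)$, and substitute into primal feasibility to obtain \eqref{eq:prox_qx_C_opt}. Your treatment is slightly more explicit about feasibility and the converse direction, but the argument is the same.
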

\begin{proof}{Proof}
	According to \cite[Corollary 28.3.1]{rockafellar1997convex}, a point $z\in \mathbb{R}^n$ is the minimizer to the optimization problem in \eqref{eq: def_prox_qx_C} if and only if there exists a scalar $w\in \mathbb{R}$ such that the following Karush-Kuhn-Tucker conditions hold:
	\begin{equation}\label{eq: kkt}
		\left\{ \begin{aligned}
			&0\in z-x +\lambda \partial \|z\|_1  + w \mu, \\
			&\mu^\top z = c.
		\end{aligned}
		\right.
	\end{equation}
	Note that the first condition in \eqref{eq: kkt} is equivalent to $
	z = {\rm Prox}_{\lambda \|\cdot\|_1}(x-w \mu)$.
	By plugging it into the affine constraint $\mu^\top z =c$, we have the equality \eqref{eq:prox_qx_C_opt}, and the remaining conclusion follows. 
\end{proof}

The following proposition analyzes the existence of a dual multiplier satisfying the condition \eqref{eq:prox_qx_C_opt}, and also shows that  one such multiplier and the proximal mapping ${\rm Prox}_{\lambda q_{\mu,c}}(x)$ can be computed in  $\mathcal{O}(n \log n)$ operations.

\begin{proposition} \label{prop:wx-exist}
	For any $x \in \mathbb{R}^n$, there must exist some $w\in \mathbb{R}$ such that \eqref{eq:prox_qx_C_opt} holds. Such a scalar $w$ and the proximal mapping
	${\rm Prox}_{\lambda q_{\mu,c}}(x)$
	can be computed in $\mathcal{O}(n \log n)$ arithmetic operations.
\end{proposition}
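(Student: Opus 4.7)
The plan is to study the scalar map $w \mapsto f(x,w) = \mu^\top {\rm Prox}_{\lambda\|\cdot\|_1}(x - w\mu)$ as a continuous, nonincreasing, piecewise-linear function of $w$, and then invoke the intermediate value theorem for existence. Writing $u_i(w) := x_i - w\mu_i$ and using the coordinatewise soft-thresholding formula, one has
$$
f(x,w) \;=\; \sum_{i=1}^n \mu_i\,\operatorname{sign}(u_i(w))\,(|u_i(w)| - \lambda)_+.
$$
This is clearly continuous in $w$. On any interval where no $|u_i(w)|$ equals $\lambda$, the derivative of the $i$-th summand equals $-\mu_i^2$ if $|u_i(w)| > \lambda$ and $0$ otherwise, so $f(x,\cdot)$ is piecewise linear with nonpositive slopes and therefore nonincreasing.

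Next I would check the two boundary limits. For $|w|$ sufficiently large every $|u_i(w)|$ exceeds $\lambda$, so the slope stabilizes at $-\sum_{i=1}^n \mu_i^2 = -\|\mu\|^2$, which is strictly negative under the standing assumption $\mu_i \neq 0$ for all $i$. This forces $f(x,w) \to +\infty$ as $w \to -\infty$ and $f(x,w) \to -\infty$ as $w \to +\infty$, so continuity and the intermediate value theorem yield some $w \in \mathbb{R}$ with $f(x,w) = c$. The previous proposition then delivers ${\rm Prox}_{\lambda q_{\mu,c}}(x) = {\rm Prox}_{\lambda\|\cdot\|_1}(x - w\mu)$.

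For the $\mathcal{O}(n\log n)$ complexity, the key observation is that $f(x,\cdot)$ is piecewise linear with at most $2n$ breakpoints, namely the values $w = (x_i \pm \lambda)/\mu_i$, $i\in[n]$. The algorithm I would implement first sorts these $2n$ candidates in $\mathcal{O}(n\log n)$ time, and then performs a single left-to-right sweep maintaining the current active set $S(w) = \{i : |u_i(w)| > \lambda\}$, the current slope $-\sum_{i\in S(w)} \mu_i^2$, and the current value of $f$. Each breakpoint toggles exactly one index between active and inactive, so the slope and the value at the next breakpoint are both updated in $\mathcal{O}(1)$. Once the unique interval on which $f$ crosses the target $c$ is detected, a single linear equation gives $w$, and one further soft-thresholding evaluation yields ${\rm Prox}_{\lambda q_{\mu,c}}(x)$ in $\mathcal{O}(n)$, for an overall cost dominated by the initial sort.

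No serious obstacle is expected. The main care needed is bookkeeping: initializing the sweep correctly (either by using the asymptotic slope $-\|\mu\|^2$ at $w = -\infty$, or by directly evaluating $f$ at the smallest breakpoint in $\mathcal{O}(n)$), and handling possible ties among breakpoints (which just amounts to toggling several indices simultaneously). Conceptually, the argument is the extension of the $\mu = e$, $c = 1$ procedure from the cited work to arbitrary $\mu$ and $c$, relying solely on the piecewise-linear structure of the soft-thresholding operator.
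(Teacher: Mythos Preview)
Your proof is correct and follows essentially the same approach as the paper: both establish existence via the intermediate value theorem applied to the continuous, nonincreasing, piecewise-linear function $w\mapsto f(x,w)$, and both achieve $\mathcal{O}(n\log n)$ complexity by sorting the $2n$ breakpoints $(x_i\pm\lambda)/\mu_i$. The only minor difference is that the paper locates the correct interval by bisection (costing $\mathcal{O}(n\log n)$ after the sort), whereas you use a left-to-right sweep with $\mathcal{O}(1)$ updates per breakpoint (costing $\mathcal{O}(n)$ after the sort); both are dominated by the sort, and your variant is in fact slightly tighter.
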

\begin{proof}{Proof}
	Define the function $s:\mathbb{R}^2\rightarrow \mathbb{R}$ as: 
	\begin{equation*}
		s(t,r) := {\rm sign}(t-r)\left(|t-r|-\lambda\right)_+,\quad t, r\in \mathbb{R}.
	\end{equation*}
	For fixed $t$, the mapping $r\mapsto s(t,r)$ is non-increasing. Consequently, for any $x\in \mathbb{R}^n$, 
	\begin{equation}
		w \mapsto f(x,w)=\sum_{i=1}^{n}\mu_i {\rm Prox}_{\lambda |\cdot|} (x_i-w\mu_i)=\sum_{i=1}^{n}\mu_i s(x_i,w\mu_i) \label{eq: f_in _s}
	\end{equation}
	is a continuous, piecewise affine, {non-increasing function, since $\mu_i\neq 0$ for all $i\in [n]$}. Moreover, as $w \rightarrow -\infty$ (respectively, $\infty$), $f(x,w)\rightarrow \infty$ (respectively, $-\infty$). By the intermediate value theorem, there must exist some $w$ such that $f(x,w) = c$. 
	
	Moreover, given $x\in \mathbb{R}^n$, solving for $w$ such that $f(x,w) = c$ reduces to finding the root of $f(x,\cdot)$. This function changes its slope at $2n$ break-points $\left\{ \frac{x_i \pm \lambda}{\mu_i}, i\in[n] \right\}$, which partition the domain into linear regions. Within each region, the function is affine, so the root can be easily determined once the correct region is identified.
	
	To do this efficiently, we first sort the $2n$ break-points, which takes $\mathcal{O}(n \log n)$ operations. Then, as $f(x,\cdot)$ is monotone, we can apply a bisection search over these regions to locate the interval containing the root. Each bisection step requires $\mathcal{O}(n)$ time, and the total number of steps is $\mathcal{O}(\log n)$. Once the correct region is found, we choose any point $\bar{w}$ in this region to compute 
	\begin{equation*}
		\bar{z} = {\rm Prox}_{\lambda \|\cdot\|_1}(x - \bar{w} \mu) = (s(x_1,\bar{w} \mu_1),\cdots,s(x_n,\bar{w} \mu_n))^\top.
	\end{equation*}	
	Then we have that the support of $\bar{z}$, denoted by $S =\{i \in [n]\mid \bar{z}_i \neq 0\}$, coincides with that of ${\rm Prox}_{\lambda q_{\mu,c}}(x)$. If $S = \emptyset$, then the proximal mapping ${\rm Prox}_{\lambda q_{\mu,c}}(x) = 0$, and any value of $w$ within the identified region is a valid dual multiplier. Otherwise, we have
	$${\rm sign}(\bar z_i) = s(x_i, w^*\mu_i) \quad \mbox{ and } \quad 
	f(x,w^*) = \sum_{i\in S} \mu_i (x_i - w^*\mu_i - {\rm sign}(\bar{z}_i)\lambda).   $$
	This, together with the constraint $f(x,w^*) = c$, gives
	$$
	w^*  =  \dfrac{c - \sum_{i \in S}\mu_i(x_i - {\rm sign}(\bar{z}_i) \lambda)}{\sum_{i\in S} \mu_i^2},
	$$
	and ${\rm Prox}_{\lambda q_{\mu,c}}(x) = {\rm Prox}_{\lambda \|\cdot\|_1}(x - w^* \mu)$. The overall complexity is thus $\mathcal{O}(n \log n)$. 
\end{proof}

For clarity and completeness, we summarize the above procedure to compute the proximal mapping ${\rm Prox}_{\lambda q_{\mu,c}}(\cdot)$ in Algorithm \ref{alg: prox}.

\begin{algorithm}
	\caption{Computation of the proximal mapping ${\rm Prox}_{\lambda q_{\mu,c}}(\cdot)$} 
	\label{alg: prox}
	\vskip6pt
	\begin{algorithmic}
		\STATE \textbf{Input:} $x \in \mathbb{R}^n$, $\lambda>0$, $c\in \mathbb{R}$, and $\mu\in \mathbb{R}^n$ with $\mu_i\neq 0$ for all $i\in [n]$.
		\STATE Let $y$ be the sorted list (in ascending order) of the $2n$ breakpoints $\left\{\frac{x_i \pm \lambda_i}{\mu_i} \right\}_{i=1}^n$
		\STATE Append $y_0 = -\infty$ and $y_{2n+1} = +\infty$, and initialize $i_{\min} \gets 0$, $i_{\max} \gets 2n + 1$
		\WHILE{$i_{\max} - i_{\min} > 1$}
		\STATE $j \gets \lfloor (i_{\min} + i_{\max}) / 2 \rfloor$
		\STATE \textbf{if} $f(x, y_j) > c$ \textbf{then} $i_{\min} \gets j$ \textbf{else} $i_{\max} \gets j$
		\ENDWHILE
		\STATE Compute $\bar{z} = {\rm Prox}_{\lambda \|\cdot\|_1}(x - (y_{i_{\min}} + y_{i_{\max}})\mu/2)$ and let $S =\{i \in [n]\mid \bar{z}_i \neq 0\}$.
		\IF{$S = \emptyset$}
		\STATE\textbf{Output:} ${\rm Prox}_{\lambda q_{\mu,c}}(x) = 0$
		\ELSE		
		\STATE$w^*  =  \dfrac{c - \sum_{i \in S}\mu_i(x_i - {\rm sign}(\bar{z}_i) \lambda)}{\sum_{i\in S} \mu_i^2}$
		\STATE\textbf{Output:} ${\rm Prox}_{\lambda q_{\mu,c}}(x) = {\rm Prox}_{\lambda \|\cdot\|_1}(x - w^* \mu)$
		\ENDIF
	\end{algorithmic}
\end{algorithm}

\section{Characterization of the B-subdifferential of ${\rm Prox}_{\lambda q_{\mu,c}}(\cdot)$}
\label{sec: Bdiff}

In this section, we study the B-subdifferential of ${\rm Prox}_{\lambda q_{\mu,c}}(\cdot)$. As we will see in the subsequent analysis, the differentiability properties of ${\rm Prox}_{\lambda q_{\mu,c}}(\cdot)$ depend crucially on whether $c$ is zero. We begin with the case $c \neq 0$, where the dual multiplier associated with the affine constraint is uniquely defined, and we can characterize both its Lipschitz continuity and the resulting B-subdifferential of ${\rm Prox}_{\lambda q_{\mu,c}}(\cdot)$. These results are presented in Sections \ref{sec: multiplier} to \ref{sec: bdiff_prox}. The case $c = 0$ is addressed in Section \ref{sec: bdiff_c0}, where the analysis is more delicate due to the potential loss of uniqueness and continuity of the multiplier.

\subsection{Lipschitz continuity of the dual multiplier}
\label{sec: multiplier}
We first assume $c\neq 0$. Under this condition, the constraint $\mu^\top x=c$ uniquely determines the dual multiplier $w$, as characterizated in the following proposition.

\begin{proposition} \label{prop:wx-unique}
	Suppose $c\neq 0$. For any $x \in \mathbb{R}^n$, there exists a unique $w\in \mathbb{R}$, which we denote as $w = w(x) $, such that \eqref{eq:prox_qx_C_opt} holds.
\end{proposition}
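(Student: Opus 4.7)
The plan is to prove uniqueness by contradiction, leveraging the piecewise-affine structure of $f(x,\cdot)$ established in the proof of Proposition~\ref{prop:wx-exist}. Existence is already given, so I only need to rule out two distinct dual multipliers. Suppose for contradiction that $f(x,w_1) = f(x,w_2) = c$ for some $w_1 < w_2$. Since $f(x,\cdot)$ is continuous and non-increasing (again by Proposition~\ref{prop:wx-exist}), this forces $f(x,w) = c$ for every $w \in [w_1, w_2]$.

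Next, I would zoom in on the piecewise-affine structure inside this interval. Recall from \eqref{eq: f_in _s} that $f(x,w) = \sum_{i=1}^n \mu_i\, s(x_i, w\mu_i)$, whose breakpoints are $\{(x_i \pm \lambda)/\mu_i : i \in [n]\}$. Pick any linear piece $I \subseteq [w_1, w_2]$, and let $S(I) = \{i \in [n] : |x_i - w\mu_i| > \lambda \text{ for } w \in I\}$, with the corresponding signs $\sigma_i = \operatorname{sign}(x_i - w\mu_i)$ (constant on $I$ for $i \in S(I)$). On $I$ one has the explicit affine form
\begin{equation*}
f(x,w) = \sum_{i \in S(I)} \mu_i\bigl(x_i - \sigma_i \lambda\bigr) - w \sum_{i \in S(I)} \mu_i^2.
\end{equation*}

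Because $f(x,\cdot)$ is constant equal to $c$ on $I$, the $w$-slope above must vanish, i.e. $\sum_{i \in S(I)} \mu_i^2 = 0$. Since by assumption $\mu_i \neq 0$ for every $i \in [n]$, this forces $S(I) = \emptyset$. Plugging back, $f(x,w) = 0$ on $I$, hence $c = 0$, contradicting our standing assumption $c \neq 0$. This yields $w_1 = w_2$ and proves uniqueness; we then denote the common value by $w(x)$.

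There is no real obstacle here: the only subtlety is to observe that the flat pieces of $f(x,\cdot)$ are precisely those on which every soft-thresholded coordinate vanishes, so that a flat piece can only sit at height $0$. Once this is noted, the non-increasing property promotes any ``non-unique'' pair of solutions into a whole flat interval at height $c$, which the hypothesis $c \neq 0$ rules out.
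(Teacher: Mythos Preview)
Your argument is correct and follows essentially the same route as the paper: both promote two distinct roots to a full flat interval via monotonicity, then show that any flat piece of $f(x,\cdot)$ must sit at height $0$, contradicting $c\neq 0$. The only cosmetic difference is that the paper reaches ``all soft-thresholded coordinates vanish'' by termwise monotonicity of $\mu_i s(x_i,w\mu_i)$ between $w_1$ and $w_2$, whereas you reach it by computing the slope $-\sum_{i\in S(I)}\mu_i^2$ on a linear sub-piece; both are equivalent verifications of the same fact.
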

\begin{proof}{Proof}
	The existence follows directly from Proposition \ref{prop:wx-exist}. 
	We prove the uniqueness by contradiction. 
	Now suppose $w_1<w_2$, with both satisfying \eqref{eq:prox_qx_C_opt}. Then, we have $f(x,w)=c$ for all $w \in [w_1, w_2]$ due to the monotonicity of $f(x,\cdot)$, which further implies $\sum_{i=1}^n \mu_i [s(x_i, w_1\mu_i) - s(x_i, w_2\mu_i)] = 0$ according to \eqref{eq: f_in _s}. Since $\mu_is(x_i, w_1\mu_i) \geq  \mu_i s(x_i, w_2\mu_i)$ and $\mu_i\neq 0$ for $i\in [n]$, we must have $ s(x_i,w_1\mu_i) =  s(x_i,w_2\mu_i)$ for all $i\in [n]$. The latter can happen only when 
	\begin{equation*}
		w_1\mu_i, w_2\mu_i \subseteq \left[x_i-\lambda, x_i+\lambda\right],\quad \mbox{for } i\in [n]. 
	\end{equation*}
	In particular, this implies $s(x_i,w_1\mu_i)= 0$ for $i\in [n]$, and hence $f(x,w_1)=\sum_{i=1}^n \mu_i s(x_i, w_1\mu_i) = 0$, which contradicts the fact that $f(x, w_1)=c\neq 0$. Therefore, the dual multiplier must be unique. 
\end{proof}

For any $x$ and $w(x)$ satisfying equation $\eqref{eq:prox_qx_C_opt}$, we define the following index sets:
\begin{equation}
	\label{def: alpha_beta_gamma}
	\begin{aligned}
	\alpha_+(x) &= \left\{
	i\in [n]\mid x_i - w(x) \mu_i > \lambda
	\right\}, \quad
	\alpha_-(x) = \left\{
	i\in [n]\mid x_i - w(x) \mu_i <- \lambda
	\right\},\notag
	\\
	\gamma(x) &= \left\{ i\in[n]\mid |x_i - w(x) \mu_i| <\lambda \right\},
	\\
	\beta_+(x) &= \left\{
	i\in [n] \mid x_i - w(x) \mu_i = \lambda 
	\right\}, \quad
	\beta_-(x) = \left\{
	i\in [n] \mid x_i - w(x) \mu_i = -\lambda
	\right\}, \notag
	\end{aligned}
\end{equation}
and 
$\alpha(x)= \alpha_+(x) \cup \alpha_-(x)$, $\beta(x) = \beta_+(x) \cup \beta_-(x)$. We note that $\alpha(x)\cup \beta(x) \cup \gamma(x)$ is a partition of the index set $[n]$.  Clearly, since $f(x, w(x)) = c\neq 0$, there is at least one $i\in [n]$ such that $|x_i - w(x)\mu_i|> \lambda$, that is, $
\alpha(x)\neq \emptyset $ 
for any $x\in \mathbb{R}^n$. 

We show in the following proposition that the dual multiplier map $w(\cdot)$ is convex, Lipschitz continuous, and piecewise affine, which will be used when characterizing sensitivity and stability of the dual multiplier map as well as the proximal mapping.
\begin{proposition}\label{prop:wx_continuous}
	Suppose $c\neq 0$. The mapping $w(\cdot)$ defined in Proposition \ref{prop:wx-unique} is convex, Lipschitz continuous, and piecewise affine. 
\end{proposition}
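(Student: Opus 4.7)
The plan is to exploit the defining scalar equation $f(x,w)=c$, where $f(x,w)=\sum_{i=1}^n \mu_i\,\mathrm{Prox}_{\lambda|\cdot|}(x_i-w\mu_i)$ is continuous and jointly piecewise affine in $(x,w)$. Under the standing hypothesis $c\neq 0$ we have $\alpha(x)\neq\emptyset$, so $\partial_w f(x,w(x))=-\sum_{i\in\alpha(x)}\mu_i^2<0$ at every $x$; this opens the door to an implicit-function argument performed cell-by-cell over a natural polyhedral complex.

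For the piecewise-affine claim, I would partition $\mathbb{R}^{n+1}$ into polyhedral cells by the $2n$ hyperplanes $\{x_i-w\mu_i=\pm\lambda\}$, $i\in[n]$. On each such cell the combinatorial data $(\alpha_\pm,\beta_\pm,\gamma)$ are constant, $f$ restricts to a jointly affine function of $(x,w)$, and its $w$-derivative is a nonzero constant. Consequently $\{f=c\}$ intersected with the cell is an $n$-dimensional affine slice which projects bijectively onto a polyhedron in $\mathbb{R}^n$; on this image, $w(\cdot)$ is an explicit affine function. The uniqueness of $w$ from Proposition~\ref{prop:wx-unique} then guarantees that these pieces glue together into a continuous, globally piecewise-affine map.

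For Lipschitz continuity, the implicit function theorem applied on each cell yields a gradient whose $i$-th component is $\mu_i/\sum_{j\in\alpha(x)}\mu_j^2$ for $i\in\alpha(x)$ and $0$ otherwise, so $\|\nabla_x w(x)\|=\bigl(\sum_{j\in\alpha(x)}\mu_j^2\bigr)^{-1/2}\le 1/\min_j|\mu_j|$, uniformly in $x$. Together with continuity across cells, this produces a global Lipschitz constant $1/\min_j|\mu_j|$.

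For convexity, which I expect to be the main obstacle, my plan is to search for an envelope representation. Using the dual formulation, $w(x)$ is the unique minimizer of the jointly convex objective $\phi(x,w)=wc+\tfrac{1}{2}\|\mathrm{Prox}_{\lambda\|\cdot\|_1}(x-w\mu)\|^2$, and one would attempt to write $w(x)$ as a pointwise maximum (when $c>0$) or minimum (when $c<0$) of the finite family of affine candidates $w_{S,\sigma}(x)=\bigl[\sum_{i\in S}\mu_i(x_i-\sigma_i\lambda)-c\bigr]/\sum_{i\in S}\mu_i^2$, indexed by nonempty supports $S\subseteq[n]$ and sign patterns $\sigma\in\{\pm1\}^S$. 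The hard step is identifying the correct subcollection of $(S,\sigma)$ that enters the envelope and verifying that the resulting extremum matches the unique $w(x)$; I anticipate that the decisive case analysis is driven by the sign of $c$ together with the consistency constraints on both $S$ and its complement, and this is where the real content of the proof must lie.
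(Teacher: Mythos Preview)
Your handling of the piecewise-affine and Lipschitz claims is correct and essentially mirrors the paper: the paper also derives the closed form
\[
w(x)=\frac{1}{\sum_{i\in\alpha(x)}\mu_i^2}\Bigl(\textstyle\sum_{i\in\alpha(x)}\mu_i x_i-\lambda\bigl(\sum_{i\in\alpha_+(x)}\mu_i-\sum_{i\in\alpha_-(x)}\mu_i\bigr)-c\Bigr),
\]
observes there are only finitely many index-set configurations $(\alpha_+,\alpha_-)$, and reads off a global Lipschitz bound from the bounded coefficients. Your cell-by-cell implicit-function viewpoint is just a repackaging of this; the gradient you compute coincides with what the paper records later as $w'(x)$.

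For convexity the paper bypasses your envelope program entirely and argues in two lines directly from the defining equation. With $\bar x=tx+(1-t)x'$ and $\bar w=tw(x)+(1-t)w(x')$ one has $\bar x_i-\bar w\mu_i=t(x_i-w(x)\mu_i)+(1-t)(x'_i-w(x')\mu_i)$; the paper then invokes convexity of the scalar soft-thresholding map $u\mapsto{\rm Prox}_{\lambda|\cdot|}(u)$ to obtain $f(\bar x,\bar w)\le c=f(\bar x,w(\bar x))$, and since $f(\bar x,\cdot)$ is non-increasing this forces $w(\bar x)\le\bar w$. No enumeration of supports $S$ or sign patterns $\sigma$ is needed. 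Two remarks on your own sketch: first, the fact that $w(x)$ is the unique minimizer of a jointly convex $\phi(x,\cdot)$ does not by itself make $x\mapsto w(x)$ convex, so that line alone would not close the argument. Second, your instinct that the sign of $c$ should dictate a max-versus-min envelope is actually well placed and worth retaining: the scalar soft-thresholding map has slope pattern $1,0,1$ and is neither globally convex nor concave, so the key inequality in the paper's short argument deserves careful verification before you abandon your more cautious plan.
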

\begin{proof}{Proof}
	For any $x\in \mathbb{R}^n$ and any $w \in \mathbb{R}$, by slightly abusing the notation, we define the following index sets
	\begin{equation*}
		\alpha_+(x, w) = \{ i\in [n] \mid x_i - w\mu_i  > \lambda \}, \qquad
		\alpha_-(x, w) = \{ i\in [n] \mid x_i - w\mu_i  < - \lambda \},
	\end{equation*}
	and $\alpha(x, w)=\alpha_+(x, w)\cup\alpha_-(x, w)$.
	
	\textbf{(i) Continuity.} First, we show continuity of $w(\cdot)$ at any given $x\in \mathbb{R}^n$. Denote 
	\begin{equation*}
		\epsilon_0  = \frac{1}{2} \min_{i\in \alpha(x)} \left\{ \left|\frac{x_i}{\mu_i} - w(x)\right| -\frac{\lambda}{|\mu_i|} \right\} > 0.
	\end{equation*}
	We claim that for any $\epsilon\in (0, \epsilon_0)$ and any $x'\in \mathbb{R}^n$ such that $\|x-x'\|_1\leq  \frac{\epsilon \mu_{\min}^2}{2\mu_{\max}}$, it holds that $|w(x)-w(x')| < \epsilon$. Here $\mu_{\max}:=\max_{i\in [n]}|\mu_i|$ and $\mu_{\min}:=\min_{i\in [n]}|\mu_i|$. We prove this by contradiction. Suppose instead that $|w(x)-w(x')| \geq \epsilon$.
	
	Recall that, $f(x, \cdot)$ is a continuous piecewise linear function, whose slope at $w$ is given by $-\sum_{i\in\alpha(x,w)} \mu_i^2$. Moreover, for any $w' \in [w(x) - \epsilon_0, w(x) + \epsilon_0]$, we have 
	\begin{equation*}
		\begin{aligned}
		|x_i-w'\mu_i|-\lambda&=|\mu_i|\left(\left|\frac{x_i}{\mu_i} - w'\right| -\frac{\lambda}{|\mu_i|}
		\right)\\
		&\geq |\mu_i|\left(\left|\frac{x_i}{\mu_i} - w(x)\right|-|w(x)-w'| -\frac{\lambda}{|\mu_i|}
		\right)\geq |\mu_i|\epsilon_0>0,
		\end{aligned}
	\end{equation*}
	for each $i\in \alpha(x,w)$. That is to say, $\alpha(x) = \alpha(x, w(x))\subseteq \alpha(x, w')$, which further implies that the slope of $f(x, \cdot)$ has magnitude at least $\sum_{i\in\alpha(x)} \mu_i^2$ at any $w'\in [w(x) - \epsilon_0, w (x)+ \epsilon_0]$. Consequently, by the mean value theorem, we have
	\begin{equation}
		|f(x, w(x)) - f(x, w')| \geq \sum_{i\in\alpha(x)} \mu_i^2 \cdot |w(x) - w'| \geq \mu_{\min}^2|w(x) - w'|,\label{eq: control_fix_x}
	\end{equation}
	for any $w' \in [w(x) - \epsilon_0, w(x) + \epsilon_0]$, where the last inequality holds as $\alpha(x)\neq \emptyset$. Then we can see that $|w(x)-w(x')| \geq \epsilon$ indicates 
	\begin{equation}
		|f(x,w(x))-f(x,w(x'))|\geq \mu_{\min}^2\epsilon,\label{eq: muep}
	\end{equation}
	since if $\epsilon\leq |w(x)-w(x')|\leq \epsilon_0$, according to \eqref{eq: control_fix_x}, we have
	\begin{equation*}
		|f(x,w(x))-f(x,w(x'))|\geq \mu_{\min}^2|w(x) - w(x')|\geq \mu_{\min}^2\epsilon;
	\end{equation*}
	and if $|w(x)-w(x')|> \epsilon_0$, by the monotonicity of $ f(x,\cdot)$ and \eqref{eq: control_fix_x}, we have
	\begin{equation*}
		\begin{aligned}
		|f(x,w(x))-f(x,w(x'))|&\geq |f(x,w(x))-f(x,w(x)+{\rm sign}(w(x')-w(x))\epsilon_0)|\\
		&\geq \mu_{\min}^2\epsilon_0\geq \mu_{\min}^2\epsilon.
		\end{aligned}
	\end{equation*}
	Therefore, we can see that
	\begin{equation}
		\begin{aligned}
		|f(x,w(x))-f(x',w(x'))|&\geq |f(x,w(x))-f(x,w(x'))|-|f(x,w(x'))-f(x',w(x'))|\notag \\
		&\geq \mu_{\min}^2\epsilon- \mu_{\max}\|x-x'\|_1\geq\mu_{\min}^2\epsilon/2>0,
		\end{aligned}
		\label{eq:diff_fx_fx'}
	\end{equation}
	where the second inequality follows from \eqref{eq: muep} and
	\begin{equation*}
		\begin{aligned}
		|f(x, w)-f(x', w)| &=\left|\sum_{i\in [n]}\mu_i  {\rm Prox}_{\lambda|\cdot|}(x_i-w\mu_i)-\sum_{i\in [n]}\mu_i{\rm Prox}_{\lambda|\cdot|}(x'_i-w\mu_i)\right|\\
		&\leq \sum_{i\in [n]}|\mu_i||x_i-x'_i|\leq \mu_{\max}\|x-x'\|_1,\qquad w\in \mathbb{R},\quad x, x'\in \mathbb{R}^n.
		\end{aligned}
	\end{equation*}
	The inequality \eqref{eq:diff_fx_fx'} contradicts the fact that $f(x,w(x))=f(x',w(x'))=c$. It follows that $w(\cdot)$ is continuous w.r.t. $\|\cdot\|_1$, and hence w.r.t. $\|\cdot \|_2$. 
	
	\textbf{(ii) Piecewise affine.} Second, we show that $w(\cdot)$ is piecewise affine. For any $x\in \mathbb{R}^n$, since $\alpha(x)\neq \emptyset$, we have
	\begin{equation*}
		\begin{aligned}
		f(x, w(x)) &= \sum_{i\in \alpha_-(x)} \mu_i(x_i-w(x)\mu_i+\lambda) + \sum_{i\in \alpha_+(x)} \mu_i(x_i -w(x)\mu_i-\lambda) \\
		&= \sum_{i\in \alpha(x)} \mu_i x_i -w(x)\sum_{i\in \alpha(x)}\mu_i^2 - \lambda \Bigg(\sum_{i\in \alpha_{+}(x)}\mu_i-\sum_{i\in \alpha_{-}(x)}\mu_i\Bigg) = c, 
		\end{aligned}
	\end{equation*}
	which implies 
	\begin{equation}
		w(x) = \frac{1}{\sum_{i\in \alpha(x)}\mu_i^2} \left(\sum_{i\in \alpha(x)} \mu_i x_i  - \lambda \Bigg(\sum_{i\in \alpha_{+}(x)}\mu_i-\sum_{i\in \alpha_{-}(x)}\mu_i\Bigg) - c \right). \label{eq: wx-expression}
	\end{equation}
	Since there are only finitely many distinct index sets for $\alpha_+(x)$, and $\alpha_-(x)$, it must be the case that $w(x)$ is piecewise affine. 
	
	\textbf{(iii) Lipschitz continuity.} Note that Lipschitz continuity follows since it is continuous and piecewise affine with bounded coefficients, as \eqref{eq: wx-expression} shows. 
	
	\textbf{(iv) Convexity.} Finally, the convexity of $w(\cdot)$ is established as follows. Let $x, x' \in \mathbb{R}^n$ and $t \in [0,1]$. Denote $w = w(x)$, $w' = w(x')$, $\bar{x} = t x + (1-t)x'$ and $\bar w = t w + (1-t) w'$. Since $\bar{x}_i- \bar{w} \mu_i= t(x_i- w\mu_i ) + (1-t)(x_i' - w'\mu_i )$ and ${\rm Prox}_{\lambda|\cdot|}(\cdot)$ is convex, we have
	\begin{equation*}
		\begin{aligned}
		f(\bar{x}, \bar{w}) &= \sum_{i=1}^n \mu_i{\rm Prox}_{\lambda|\cdot|}(\bar{x}_i - \bar{w}\mu_i)\\
		&\leq t \sum_{i=1}^n \mu_i {\rm Prox}_{\lambda|\cdot|} (x_i - w\mu_i) + (1-t) \sum_{i=1}^n\mu_i {\rm Prox}_{\lambda|\cdot|}(x'_i-w'\mu_i) \\
		&= t f(x, w) + (1-t)f(x', w') = c=f(\bar{x},w(\bar{x})).
		\end{aligned}
	\end{equation*}
	Since $f(x,\cdot)$ is non-increasing, we have $w(\bar{x}) \leq \bar{w}$ and convexity of $w(\cdot)$ follows. This completes the proof. 
\end{proof}

\subsection{B-subdifferential of the dual multiplier mapping}
\label{sec: bdiff_wx}
Continuing the assumption $c\neq 0$, we examine the differentiability properties of the dual multiplier mapping $w(\cdot)$. To facilitate subsequent analysis, we give the following lemma on the directional derivative of ${\rm Prox}_{\lambda \|\cdot\|_1}$, obtained by straightforward calculation.
\begin{lemma}\label{lemma: proxl1}
	For any $z, h \in \mathbb{R}^n$, let ${\rm Prox}'_{\lambda \|\cdot\|_1}(z;h)$ be the (one-sided) directional derivative of ${\rm Prox}_{\lambda \|\cdot\|_1}$ at point $z$ along direction $h$. Then, it holds that for all $i=1,\ldots, n$,
	\begin{equation*}
		({\rm Prox}'_{\lambda \|\cdot\|_1}(z;h))_i = 
		\begin{cases}
			\max\{0,h_i\} & {\rm if} \ z_i = \lambda, \\
			h_i & {\rm if}\ z_i >\lambda \mbox{ {\rm or} } z_i <-\lambda,\\ 
			0 & {\rm if}\ z_i \in (-\lambda, \lambda), \\
			\min\{0,h_i\} & {\rm if} \ z_i = -\lambda.
		\end{cases}
	\end{equation*}
\end{lemma}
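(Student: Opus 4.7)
The plan is to exploit separability: the proximal operator of the $\ell_1$-norm acts coordinatewise as the scalar soft-thresholding operator $S_\lambda(t) = {\rm sign}(t)(|t|-\lambda)_+$, so
\begin{equation*}
({\rm Prox}_{\lambda\|\cdot\|_1}(z))_i = S_\lambda(z_i), \qquad i=1,\ldots,n.
\end{equation*}
Since one-sided directional derivatives of a separable map decompose coordinatewise, it suffices to compute, for each scalar $z \in \mathbb{R}$ and each scalar direction $h \in \mathbb{R}$, the one-sided derivative
\begin{equation*}
S_\lambda'(z;h) = \lim_{t \downarrow 0} \frac{S_\lambda(z+th) - S_\lambda(z)}{t}.
\end{equation*}

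I would then carry out a case split based on the position of $z$ relative to the kink points $\pm\lambda$. If $z > \lambda$ or $z < -\lambda$, then $S_\lambda$ coincides with $t \mapsto t - \lambda$ or $t \mapsto t + \lambda$ on a neighborhood of $z$, respectively; in both situations $S_\lambda$ is classically differentiable with derivative one, so $S_\lambda'(z;h) = h$. If $|z| < \lambda$, then $S_\lambda \equiv 0$ on a neighborhood of $z$, hence $S_\lambda'(z;h) = 0$.

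The only nontrivial cases are the kink points $z = \pm \lambda$, and here I would simply evaluate the limit directly for $h \geq 0$ and $h < 0$ separately. For $z = \lambda$: if $h \geq 0$, then $S_\lambda(\lambda + th) = th$ for all small $t>0$, giving limit $h$; if $h < 0$, then $\lambda + th < \lambda$ for small $t>0$, hence $S_\lambda(\lambda + th) = 0$, giving limit $0$. Combining the two subcases yields $S_\lambda'(\lambda;h) = \max\{0,h\}$. The case $z = -\lambda$ is analogous by symmetry (using $S_\lambda(-t) = -S_\lambda(t)$) and yields $\min\{0,h\}$. Assembling the coordinate contributions produces exactly the piecewise formula stated in the lemma.

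There is no real obstacle: the entire argument is elementary once one reduces to the scalar case, and the only care needed is in bookkeeping the one-sided limits at the kink points $z_i = \pm \lambda$.
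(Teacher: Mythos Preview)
Your proposal is correct and matches the paper's approach: the paper simply states that the lemma is ``obtained by straightforward calculation'' and gives no further proof, which is exactly the coordinatewise soft-thresholding computation you outline.
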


Since $w(\cdot)$ is Lipschitz continuous piecewise affine according to Proposition \ref{prop:wx_continuous}, it is differentiable almost everywhere by Rademacher’s
theorem \cite[Section 9.J]{rockafellar2009variational}. Denote 
\[
D_w:= \left\{x\in\mathbb{R}^n\mid w(\cdot) \mbox{ is differentiable at } x \right\}.
\]
In the next proposition, we show that $w(\cdot)$ is differentiable at $x$ if and only if $\beta(x) = \emptyset$, where $\beta(x)$ is defined in \eqref{def: alpha_beta_gamma} .

\begin{proposition}\label{prop:dbbeta}
	Suppose $c\neq 0$. For any given $x\in\mathbb{R}^n$, $w(\cdot)$ is differentiable at $x$ if and only if the index set $\beta(x)$ given in \eqref{def: alpha_beta_gamma} is empty. In fact, for any $x\in D_w$, the derivative $w'(x)\in\mathbb{R}^{1\times n}$ takes the form as 
	\begin{equation}\label{eq: w_derivative}
		(w'(x))_i  =  \begin{cases}
			\frac{\mu_i}{\sum_{j\in\alpha(x) }\mu_j^2} 
			&{\rm if}\ i\in \alpha(x) ,
			\\[5pt]
			0 & {\rm otherwise}.
		\end{cases}
	\end{equation}
\end{proposition}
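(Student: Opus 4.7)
The overall plan is to leverage the explicit piecewise-affine formula \eqref{eq: wx-expression} for $w(\cdot)$ derived in the proof of Proposition \ref{prop:wx_continuous}, together with the continuity of $w(\cdot)$, in order to pin down the exact affine piece(s) of $w(\cdot)$ meeting at $x$. For the sufficiency direction ($\beta(x)=\emptyset\Rightarrow$ differentiable), the strict inequalities $|x_i-w(x)\mu_i|\neq \lambda$ for all $i\in[n]$, combined with continuity of $w(\cdot)$, imply that for $x'$ in a sufficiently small neighborhood of $x$ the quantities $x_i'-w(x')\mu_i$ remain strictly on the same side of $\pm\lambda$ as $x_i-w(x)\mu_i$. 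Hence $\alpha_+(x')=\alpha_+(x)$ and $\alpha_-(x')=\alpha_-(x)$ throughout that neighborhood, and substituting into \eqref{eq: wx-expression} shows that $w(\cdot)$ coincides with a single affine function of $x'$ near $x$. Differentiating this affine function coordinate-wise immediately yields \eqref{eq: w_derivative}.

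For the necessity direction ($\beta(x)\neq\emptyset\Rightarrow w$ is not differentiable at $x$), I argue contrapositively. Pick $i_0\in\beta_+(x)$ (the case $i_0\in\beta_-(x)$ is symmetric), and construct two local affine pieces of $w(\cdot)$ meeting at $x$ whose gradients differ. Consider the perturbations $x+te_{i_0}$ and $x-te_{i_0}$ for small $t>0$. On the first perturbation I posit $i_0\in\alpha_+$ while keeping all other index assignments as at $x$; on the second I posit $i_0\in\gamma$ with the same convention. For each candidate piece, \eqref{eq: wx-expression} supplies an explicit affine expression for $w$, and both expressions evaluate to $w(x)$ at $x$ due to the boundary relation $x_{i_0}-w(x)\mu_{i_0}=\lambda$. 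A direct self-consistency check then verifies, for $t>0$ sufficiently small, the strict inequalities defining the posited index sets, so these are genuine local affine pieces of $w(\cdot)$. The partial derivative $\partial w/\partial x_{i_0}$ equals $\mu_{i_0}/\bigl(\sum_{j\in\alpha(x)}\mu_j^2+\mu_{i_0}^2\bigr)$ on the first piece and $0$ on the second. Since $\mu_{i_0}\neq 0$, these gradients disagree, and the continuous piecewise-affine function $w(\cdot)$ cannot be differentiable at $x$.

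The main technical obstacle is the self-consistency verification in the necessity direction: after substituting a guessed index-set assignment into \eqref{eq: wx-expression}, one must check that the resulting $w$-value, paired with the perturbed input $x\pm te_{i_0}$, satisfies the strict inequalities that define that assignment. This reduces to controlling ratios such as $\sum_{j\in\alpha(x)}\mu_j^2/\bigl(\sum_{j\in\alpha(x)}\mu_j^2+\mu_{i_0}^2\bigr)$, which lie strictly in $(0,1)$ because $\alpha(x)\neq\emptyset$ (as recorded immediately after \eqref{def: alpha_beta_gamma}). The explicit formula \eqref{eq: w_derivative} is then an immediate byproduct of the sufficiency computation, so once both directions are established the proposition follows.
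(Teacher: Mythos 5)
Your sufficiency direction is essentially the paper's own argument: when $\beta(x)=\emptyset$, continuity of $w(\cdot)$ (Proposition~\ref{prop:wx_continuous}) keeps the index sets in \eqref{def: alpha_beta_gamma} locally constant, and \eqref{eq: wx-expression} then exhibits $w(\cdot)$ as a single affine function near $x$, yielding \eqref{eq: w_derivative}. Your necessity direction, however, departs from the paper, which differentiates the identity $f(x,w(x))=c$ via the chain rule for B-differentiable functions and extracts an algebraic contradiction from the resulting directional-derivative identities; your plan of exhibiting two one-sided affine pieces along $\pm e_{i_0}$ is more elementary in spirit, but it has a gap as written.

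The gap is in the self-consistency check on the $+$ side. Take $i_0\in\beta_+(x)$ with $\mu_{i_0}>0$ and $t>0$. Your posited assignment gives $w(x+te_{i_0})-w(x)=\Delta$ with $\Delta=\mu_{i_0}t/\bigl(\sum_{j\in\alpha(x)}\mu_j^2+\mu_{i_0}^2\bigr)>0$, and this change in $w$ acts on \emph{every other} boundary index: for $j\in\beta_+(x)\setminus\{i_0\}$ with $\mu_j<0$ one gets $x_j-(w(x)+\Delta)\mu_j=\lambda+|\mu_j|\Delta>\lambda$, and for $j\in\beta_-(x)$ with $\mu_j>0$ one gets $x_j-(w(x)+\Delta)\mu_j<-\lambda$. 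Such indices are dragged into $\alpha$, contradicting the convention of ``keeping all other index assignments as at $x$''; this is not a corner case (for $\mu=e$ it occurs whenever $\beta_-(x)\neq\emptyset$). Consequently the partial derivative you claim for the $+$ piece, $\mu_{i_0}/\bigl(\sum_{j\in\alpha(x)}\mu_j^2+\mu_{i_0}^2\bigr)$, is wrong in general: the correct denominator also contains the $\mu_j^2$ of the dragged-in indices. The argument is repairable, because the qualitative conclusion survives: on the $-$ side one checks directly that $f(x+te_{i_0},w(x))=c$ for small $t<0$ (no coordinate other than $i_0$ is perturbed and $w$ is unchanged), so by uniqueness of the multiplier (Proposition~\ref{prop:wx-unique}) the one-sided derivative is exactly $0$; on the $+$ side, whatever the correct active set $\mathcal{S}\supseteq\alpha(x)\cup\{i_0\}$ turns out to be, the one-sided derivative equals $\mu_{i_0}/\sum_{j\in\mathcal{S}}\mu_j^2\neq 0$. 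Since these two one-sided derivatives are not negatives of each other, $w$ is not differentiable at $x$. You should either solve the perturbed equation $f(x+te_{i_0},w)=c$ for $\Delta$ honestly (tracking which boundary indices activate) or argue at this qualitative level; as stated, the verification you describe does not go through.
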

\begin{proof}{Proof}
	We prove the equivalence by showing both directions.
	
	\textbf{($\Leftarrow$) Suppose $\beta(x)=\emptyset$.} In this case, a small perturbation on $x$ will not change the index set $\alpha(\cdot), \beta(\cdot)$ and $\gamma(\cdot)$ in \eqref{def: alpha_beta_gamma}. Together with the expression of $w(\cdot)$ in \eqref{eq: wx-expression}, we can see that $w(\cdot)$ is differentiable at $x$.
	
	\textbf{($\Rightarrow$) Suppose $x\in D_w$.} We prove $\beta(x) = \emptyset$ by contradiction. Suppose instead $|\beta(x)|>0$. By the chain rule of the composition of B-differentiable functions \cite[Proposition 3.1.6]{facchinei2003finite}, the equation \eqref{eq:prox_qx_C_opt} implies that, for any $h\in\mathbb{R}^n$, we have
	\begin{equation}
		\label{eq:dif-w}
		\mu^\top {\rm Prox}'_{\lambda \|\cdot\|_1}(x - w(x)\mu;\, h - (w'(x)h)\mu ) = 0.
	\end{equation}
	Denote $w'(x) =[\eta_1, \eta_2,\cdots,\eta_n]$. Recall that $|\alpha(x)| \geq 1 $ when $c\neq 0$. Pick $i \in \alpha(x)$ and choose $h = e_{i}$, that is, the $i$-th standard basis in $\mathbb{R}^n$, then $h - (w'(x)h) \mu=e_i -\eta_i\mu$. Therefore, according to Lemma \ref{lemma: proxl1} and equation \eqref{eq:dif-w}, we have that
	\begin{equation}
		\begin{aligned}
		\sum_{k\in \alpha(x),k\neq i}\mu_k(-\eta_i\mu_k)&+\mu_i(1-\eta_i\mu_i)+\sum_{k\in \beta_+(x)}\mu_k\max\{0,-\eta_i\mu_k\}\notag\\
		&+\sum_{k\in \beta_-(x)}\mu_k\min\{0,-\eta_i\mu_k\}=0.
		\end{aligned}
		\label{eq: hei}
	\end{equation}
	On the other hand, one can obtain the following equation by choosing $h = -e_{i}$:
	\begin{equation*}
		\sum_{k\in \alpha(x),k\neq i}\!\mu_k\eta_i\mu_k+\mu_i(\eta_i\mu_i\!-\!1)+\! \sum_{k\in \beta_+(x)}\!\! \!\mu_k\max\{0,\eta_i\mu_k\}+\! \sum_{k\in \beta_-(x)}\!\!\!\mu_k\min\{0,\eta_i\mu_k\}=0.
	\end{equation*}
	After summing up the above two equations, we have
	\begin{equation*}
		0 = \sum_{k\in \beta_+(x)}\mu_k |\eta_i\mu_k|-\sum_{k\in \beta_-(x)}\mu_k|\eta_i\mu_k|.
	\end{equation*}
	Equation \eqref{eq: hei} further implies that $\eta_{i}\neq 0$, thus the above equality indicates that
	\begin{equation}
		\sum_{k\in \beta_+(x)}\mu_k |\mu_k| =\sum_{k\in \beta_-(x)}\mu_k |\mu_k|.\label{eq: b+b-}
	\end{equation}
	Since $|\beta(x)|>0$, we have $\beta_{+}(x)\neq \emptyset$ or $\beta_-(x)\neq \emptyset$. Without loss of generality, we assume $\beta_{+}(x)\neq \emptyset$. By taking $j\in \beta_{+}(x)$ and choosing $h=e_j$ or $h=-e_j$, we have:
	\begin{equation}
		\begin{aligned}
		&\sum_{k\in \alpha(x)}\mu_k(-\eta_j\mu_k)+\sum_{k\in \beta_+(x),k\neq j}\mu_k\max\{0,-\eta_j\mu_k\}+\mu_j\max\{0,1-\eta_j\mu_j\}\notag \\
		&\qquad \qquad \qquad +\sum_{k\in \beta_-(x)}\mu_k\min\{0,-\eta_j\mu_k\}=0,\notag\\
		& \sum_{k\in \alpha(x)}\mu_k(\eta_j\mu_k)+\sum_{k\in \beta_+(x),k\neq j}\mu_k\max\{0,\eta_j\mu_k\}+\mu_j\max\{0,-1+\eta_j\mu_j\}\notag \\
		&\qquad \qquad \qquad+\sum_{k\in \beta_-(x)}\mu_k\min\{0,\eta_j\mu_k\}=0.
		\end{aligned} 
		\label{eq:betaj}
	\end{equation}
	Summing the above two equalities, we have
	\begin{equation*}
		\sum_{k\in \beta_+(x),k\neq j}\mu_k |\eta_j \mu_k|+\mu_j |1-\eta_j\mu_j|-\sum_{k\in \beta_-(x)}\mu_k|\eta_j\mu_k|=0.
	\end{equation*}
	Combing with equation \eqref{eq: b+b-}, we have
	\begin{equation*}
		\mu_j |1-\eta_j\mu_j| -\mu_j |\eta_j\mu_j| = 0.
	\end{equation*}
	Since $\mu_j\neq 0$, we have
	\begin{equation*}
		|1-\eta_j\mu_j|-|\eta_j\mu_j|=0,
	\end{equation*}
	which indicates that $\eta_j\mu_j=1/2$. Substituting this back to \eqref{eq:betaj} gives
	\[
	\sum_{k\in \alpha(x)}\mu_k(\frac{1}{2\mu_j}\mu_k)+\sum_{k\in \beta_+(x),k\neq j}\mu_k\max\{0,\frac{1}{2\mu_j}\mu_k\}+\sum_{k\in \beta_-(x)}\mu_k\min\{0,\frac{1}{2\mu_j}\mu_k\}=0,
	\]
	which means that
	\begin{equation*}
		\sum_{k\in \alpha(x)} \mu_k^2+\sum_{k\in \beta_+(x),k\neq j,\mu_k\mu_j > 0} \mu_k^2+\sum_{k\in \beta_-(x),\mu_k\mu_j < 0} \mu_k^2=0.
	\end{equation*}
	Since $|\alpha(x)|>0$, we arrive at a contradiction. Hence, we must have $|\beta(x)| = 0$. 
	
	Note that the desired expression \eqref{eq: w_derivative} follows directly from the formula in \eqref{eq: wx-expression}. We thus complete the proof of this proposition. 
\end{proof}

Based on Proposition \ref{prop:dbbeta}, we can characterize the B-subdifferential of $w(\cdot)$.
\begin{theorem}
	\label{thm:partialBw}
	Suppose $c\neq 0$. For any $x\in \mathbb{R}^n$, we have the following results.
	\begin{enumerate}[label=(\alph*),leftmargin=2em]
		\item We have
		\begin{equation*}
			\partial_B w(x):= \left\{ \lim_{k\rightarrow \infty} w'(x^k)\ \middle\vert \ x^k \rightarrow x, x^k \in D_w
			\right\}\subseteq {\cal M}(x),
		\end{equation*}
		where ${\cal M}(x)$ is a set of linear operators from $\mathbb{R}^n$ to $\mathbb{R}$ defined as
		\begin{equation*}
			\mathcal{M}(x) = \left\{
			h \in \mathbb{R}^{1 \times n}  \middle| 
			h_i =
			\begin{cases}
				\mu_i / s & i \in \alpha(x) \\
				0 & i \in \gamma(x) \\
				0\ {\rm or}\ \mu_i/s & i \in \beta(x)
			\end{cases},
			\left.
			\begin{array}{l}
				\text{with } s = \sum_{j \in \mathcal{S}(x)} \mu_j^2, \\
				\alpha(x) \subseteq \mathcal{S}(x) \subseteq [n] \setminus \gamma(x)
			\end{array}
			\right.
			\right\}
		\end{equation*}
		with $\alpha(\cdot), \beta(\cdot)$ and $\gamma(\cdot)$ being the index sets given in \eqref{def: alpha_beta_gamma}.
		\item For any $\beta_+'(x)\subseteq\beta_+(x)$ and $\beta_-'(x)\subseteq\beta_-(x)$, we can construct $ h^* \in \partial_B w(x)$ as
		\begin{equation*}
			h^*_i = 
			\begin{cases}
				\frac{\mu_i}{\sum_{j\in\alpha(x)\cup \beta_+'(x)\cup \beta_-'(x) }\mu_j^2} 
				&{\rm if}\ i\in \alpha(x) \cup \beta_+'(x)\cup\beta_-'(x),
				\\[5pt]
				0 & {\rm otherwise}.
			\end{cases}
		\end{equation*} 
		\item It holds that $\partial_B w(x) = {\cal M}(x)$.
	\end{enumerate}
\end{theorem}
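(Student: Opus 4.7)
My plan is to establish (a) first, (b) second, and then derive (c) as an immediate corollary, noting that the elements $h^*$ produced in (b) as $(\beta_+'(x),\beta_-'(x))$ ranges over $2^{\beta_+(x)} \times 2^{\beta_-(x)}$ exhaust ${\cal M}(x)$ via the bijection $\mathcal{S}(x)\setminus\alpha(x) \leftrightarrow \beta_+'(x)\cup\beta_-'(x)$.

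For (a), I would take any $\{x^k\} \subseteq D_w$ with $x^k\to x$ and, after passing to a subsequence, assume $w'(x^k)\to h$. By continuity of $w(\cdot)$ (Proposition \ref{prop:wx_continuous}), the map $y \mapsto y_i - w(y)\mu_i$ is continuous, so the strict inequalities defining $\alpha(x)$ and $\gamma(x)$ persist near $x$: $\alpha(x) \subseteq \alpha(x^k)$ and $\gamma(x) \subseteq \gamma(x^k)$ for large $k$. For $i \in \beta_+(x)$ the value $x^k_i - w(x^k)\mu_i \to \lambda > 0$, so if such $i$ lies in $\alpha(x^k)$ it must lie in $\alpha_+(x^k)$; symmetrically for $\beta_-(x)$. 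Since $\beta(x^k) = \emptyset$ by Proposition \ref{prop:dbbeta}, every $i \in \beta(x)$ falls into exactly one of $\alpha_\pm(x^k)$ or $\gamma(x^k)$, with the sign forced as above. A further subsequence stabilizes this assignment, giving $\alpha(x^k) = \alpha(x) \cup \beta_+'(x) \cup \beta_-'(x)$ eventually for some $\beta_\pm'(x) \subseteq \beta_\pm(x)$. Formula \eqref{eq: w_derivative} then identifies $h$ as the element of ${\cal M}(x)$ with $\mathcal{S}(x) = \alpha(x) \cup \beta_+'(x) \cup \beta_-'(x)$.

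Part (b) will be the main obstacle. Fix $\beta_+'(x) \subseteq \beta_+(x)$, $\beta_-'(x) \subseteq \beta_-(x)$; set $S = \alpha(x) \cup \beta_+'(x) \cup \beta_-'(x)$, $S_\pm = \alpha_\pm(x) \cup \beta_\pm'(x)$; and let $w_S$ denote the affine function on $\mathbb{R}^n$ obtained from \eqref{eq: wx-expression} by replacing $\alpha_\pm(x)$ with $S_\pm$. Observe that $w_S(x) = w(x)$, since for each $i \in \beta_\pm(x)$ one has $\mu_i x_i \mp \lambda \mu_i = w(x)\mu_i^2$, so enlarging $S$ by $\beta$-indices leaves the quotient unchanged. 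The aim is to construct $x^{(\epsilon)} = x + \epsilon v$ in $D_w$ with $\alpha_\pm(x^{(\epsilon)}) = S_\pm$, since Proposition \ref{prop:dbbeta} will then yield $w'(x^{(\epsilon)}) \equiv h^*$. The difficulty is that the naive choice $v_i = \pm 1$ on $\beta(x)$ fails because the induced change in the dual multiplier can swamp the intended shift of $x_i - w(x^{(\epsilon)})\mu_i$ across $\pm \lambda$. The remedy is to choose $v$ that pre-compensates for the change in $w_S$: with $A := \sum_{j\in\alpha(x)}\mu_j^2 > 0$ (positive because $c \neq 0$ forces $\alpha(x) \neq \emptyset$), $K' := \sum_{\beta_+'(x)}\mu_j - \sum_{\beta_-'(x)}\mu_j$, and $W := K'/A$, define
\[
v_i = \begin{cases} \mu_i W + 1, & i \in \beta_+'(x) \cup (\beta_-(x)\setminus \beta_-'(x)),\\ \mu_i W - 1, & i \in \beta_-'(x) \cup (\beta_+(x)\setminus \beta_+'(x)),\\ 0, & i \in \alpha(x) \cup \gamma(x). \end{cases}
\]
A short algebraic computation yields $\sum_{j \in S}\mu_j v_j = sW$ (where $s = \sum_{j \in S}\mu_j^2$), hence $w_S(x + \epsilon v) = w(x) + \epsilon W$. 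Therefore $(x+\epsilon v)_i - w_S(x+\epsilon v)\mu_i = (x_i - w(x)\mu_i) + \epsilon(v_i - \mu_i W)$, and by design $v_i - \mu_i W = \pm 1$ on $\beta(x)$ with precisely the sign needed to place $i$ into $S_\pm$ (for $i \in \beta_\pm'(x)$) or into the complement of $S$ (for $i \in \beta_\pm(x)\setminus\beta_\pm'(x)$); meanwhile the strict inequalities on $\alpha(x) \cup \gamma(x)$ persist for small $\epsilon > 0$. Hence $x + \epsilon v$ satisfies the KKT conditions with active sets $(S_+, S_-)$ and $\beta(x+\epsilon v) = \emptyset$, so $x+\epsilon v \in D_w$ with gradient $h^*$, and letting $\epsilon \downarrow 0$ yields $h^* \in \partial_B w(x)$.

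Finally, (c) follows at once: (a) gives $\partial_B w(x) \subseteq {\cal M}(x)$, while (b), ranging over all pairs $(\beta_+'(x),\beta_-'(x))$, exhibits every element of ${\cal M}(x)$ inside $\partial_B w(x)$.
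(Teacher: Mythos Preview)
Your proposal is correct and follows the same three–part architecture as the paper: (a) via continuity of $w(\cdot)$ and the explicit formula \eqref{eq: w_derivative} for $w'(x^k)$, (b) via an explicit perturbation realizing a prescribed active set, and (c) by combining the two inclusions. Your subsequence argument in (a), which stabilizes $\alpha(x^k)$ to a fixed set $\alpha(x)\cup\beta_+'(x)\cup\beta_-'(x)$, is in fact slightly cleaner than the paper's limit argument.

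The only substantive difference is the concrete construction in (b). The paper perturbs the $\alpha(x)$–coordinates as well (by amounts proportional to $1/\mu_i$) so as to force $w(x^k)=w(x)$ exactly, and then verifies $f(x^k,w(x))=c$ directly. You instead perturb only the $\beta(x)$–coordinates, inserting the compensation term $\mu_iW$ so that the induced drift $w_S(x+\epsilon v)=w(x)+\epsilon W$ is exactly offset in each coordinate difference $(x+\epsilon v)_i-w_S(x+\epsilon v)\mu_i$. Both constructions achieve $\beta(x+\epsilon v)=\emptyset$ with the desired active set; yours has the minor aesthetic advantage of leaving $\alpha(x)\cup\gamma(x)$ untouched, while the paper's has the advantage that the dual multiplier stays fixed, making the verification $f(x^k,w(x))=c$ a one–line telescoping sum. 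Either route is entirely adequate.
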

\begin{proof}{Proof}
	(a) For any $v\in \partial_B w(x)$, we will show $v\in {\cal M}(x)$. By definition of $\partial_B w(x)$, there exists a sequence $\{x^k\} \subseteq D_w$ such that $x^k\to x$ and $w'(x^k) \to v$. Together with the continuity of $w(\cdot)$ proved in Proposition \ref{prop:wx_continuous}, for $k$ sufficiently large, we have 
	\begin{equation*}
		|(x^k)_i - w(x^k)\mu_i| > \lambda \mbox{ for all } i\in \alpha(x), \quad  \mbox{ and } \
		|(x^k)_i - w(x^k)\mu_i| < \lambda \mbox{ for all } i\in \gamma(x). 
	\end{equation*}
	Meanwhile, for $i\in \beta(x)$, $|(x^k)_i - w(x^k)\mu_i| - \lambda$ converges to $|x_i - w(x)\mu_i|-\lambda = 0$ as $ k \rightarrow \infty$.
	Therefore, we have $\alpha(x)\subseteq \alpha(x^k)$ and $\gamma(x) \subseteq \gamma(x^k)$ for sufficiently large $k$. From Proposition \ref{prop:dbbeta}, we know that we have $\beta(x^k) = \emptyset$ for all $k$, and hence
	\[
	\alpha(x) \subseteq \alpha(x^k) = [n] \setminus \gamma(x^k) \subseteq [n] \setminus \gamma(x),\quad \mbox{for large }k.
	\]
	Moreover, we also have $(w'(x^k))_i = \mu_i/\sum_{j\in\alpha(x^k) }\mu_j^2$ if $i\in \alpha(x^k)$, and $0$ otherwise.
	Since $w'(x^k) \to v$, we can define $s = \lim_{k\to \infty} \sum_{j\in \alpha(x^k)}\mu_j^2$.
	Clearly, since $\mu_i\neq 0$ for all $i\in [n]$, there must exist a set ${\cal S}(x)$ such that $\alpha(x) \subseteq {\cal S}(x)\subseteq [n]\backslash \gamma(x)$ and $s=\sum_{j\in {\cal S}(x)}\mu_j^2$. In addition, we can see that $v_i = \mu_i/s$ for all $i\in \alpha(x)$, $v_i \in \{0, \mu_i/s\}$ for all $i\in \beta(x)$, and $v_i = 0$ for all $i\in \gamma(x)$. That is, $v\in {\cal M}(x)$.
	
	(b) We will show that for such $h^*$, there exists a sequence $\{x^k\} \subseteq D_w$ such that
	$x^k \to x$ and $w'(x^k) \to h^*$. Here, we only need to focus on the nontrivial case where $x\not\in D_w$, that is, $\beta(x)\not=\emptyset$. For $k\geq 1$, define a sequence $\{t^k\} \subseteq \mathbb{R}^n$ as follows: for each $i\in[n]$,
	\begin{equation*}
		(t^k)_i = 
		\begin{cases}
			\lambda\frac{\sum\nolimits_{j\in \beta_-'(x)} {\rm sgn} (\mu_j)-\sum\nolimits_{j\in \beta_+'(x)} {\rm sgn} (\mu_j)}{k|\alpha(x)|\mu_i} & \mbox{ if $ i\in \alpha(x) $}, \\[5pt]
			-\frac{\lambda}{k|\mu_i|} & \mbox{ if  $i\in \beta_+(x)\backslash \beta_+'(x)$ or $i\in \beta_-'(x)$}, \\[5pt]
			\frac{\lambda}{k|\mu_i|} & \mbox{ if  $i\in \beta_+'(x)$ or $i\in \beta_-(x)\backslash\beta_-'(x)$}, \\[5pt]
			0  & \mbox{ if $ i\in \gamma(x) $}.
		\end{cases}
	\end{equation*}
	By choosing $x^k = t^k+x$, there must exist an integer $k_0$, such that for all $k\ge k_0$, 
	\begin{equation}
		|(x^k)_i - w(x)\mu_i|   
		\begin{cases}
			<\lambda &\mbox{for $i\in \gamma(x)\cup( \beta_+(x) \backslash \beta_+'(x))\cup (\beta_-(x) \backslash \beta_-'(x))$},
			\\[5pt]
			> \lambda &\mbox{for $i\in \alpha(x)\cup \beta_+'(x)\cup \beta_-'(x)$} .
		\end{cases}\label{eq:xk_wx}
	\end{equation}
	Moreover, for all $k\geq k_0$, we have
	\begin{equation*}
		\begin{aligned}
		&\mu^\top{\rm Prox}_{\lambda \|\cdot\|_1}(x^k \!- \! w(x)\mu)
		=\!\!\! \!\sum_{i\in \alpha_+(x)} \!\!\!\mu_i(t^k_i\!+\! x_i \!-\! w(x)\mu_i \!-\!\lambda ) +\! \!\!\!\sum_{i\in \alpha_-(x)} \!\!\!\mu_i(t^k_i\!+\! x_i \!- \!w(x)\mu_i \!+\!\lambda ) 
		\\[5pt]
		& \qquad\qquad  + \sum_{i\in \beta_+'(x)}\mu_i (t^k_i + x_i - w(x)\mu_i -\lambda ) + \sum_{i\in \beta_-'(x)} \mu_i(t^k_i + x_i - w(x)\mu_i +\lambda )
		\\[5pt]
		& =\sum_{i\in \alpha_+(x)} \mu_i(x_i - w(x)\mu_i -\lambda ) + \sum_{i\in \alpha_-(x)} \mu_i(x_i - w(x)\mu_i +\lambda ) 
		+ \!\!\sum_{i\in \alpha(x)\cup \beta_+'(x)\cup \beta_-'(x)} \mu_it^k_i
		\\
		& 
		= \sum_{i\in \alpha_+(x)} \mu_i(x_i - w(x)\mu_i -\lambda ) + \sum_{i\in \alpha_-(x)} \mu_i(x_i - w(x)\mu_i +\lambda ) 
		\\
		&= \mu^\top{\rm Prox}_{\lambda \|\cdot\|_1}(x - w(x)\mu) \;=\; c.
		\end{aligned}
	\end{equation*}
	That is, $(x^k,w(x))$ is a solution to equation \eqref{eq:prox_qx_C_opt}.  
	Hence, by the uniqueness of the dual multiplier shown in Proposition \ref{prop:wx-unique}, it holds that for all $k\ge k_0$, $w(x^k) = w(x)$. This, together with \eqref{eq:xk_wx}, further implies that for $k\geq k_0$, we have
	\begin{equation*}
		\begin{aligned}
		&\alpha(x^k) = \alpha(x) \cup \beta_+'(x) \cup \beta_-'(x), \qquad  \beta(x^k) = \emptyset,\\
		&\gamma(x^k) = \gamma(x) \cup  ( \beta_+(x) \backslash \beta_+'(x)) \cup ( \beta_-(x) \backslash \beta_-'(x)).
		\end{aligned}
	\end{equation*}
	Therefore, from Proposition \ref{prop:dbbeta}, we know that for $k\geq k_0$, $x^k\in D_w$ and $w'(x^k) = h^*$. Combining with the fact that $x^k\rightarrow x$, we have $ h^* \in \partial_B w(x)$.
	
	(c) This conclusion follows directly from a simple observation that each and every element in ${\cal M}(x)$ can be represented by appropriately choosing the index sets $\beta'_+(x)\subseteq \beta_+(x)$ and $\beta'_-(x)\subseteq \beta_-(x)$. This completes the proof of the theorem. 
\end{proof}

\subsection{B-subdifferential of ${\rm Prox}_{\lambda q_{\mu,c}}(\cdot)$}
\label{sec: bdiff_prox}
Still assuming $c\neq 0$, we now study the B-subdifferential of the proximal mapping ${\rm Prox}_{\lambda q_{\mu,c}}(\cdot)$. According to \eqref{eq: prox_qx_C} and Propositions \ref{prop:wx-unique} and \ref{prop:wx_continuous}, we have that, for any $x\in\mathbb{R}^n$,
\begin{equation}
	{\rm Prox}_{\lambda q_{\mu,c}}(x) = {\rm Prox}_{\lambda \|\cdot\|_1}(x - w(x)\mu),\label{eq: prox_lambda_q} 
\end{equation}
and ${\rm Prox}_{\lambda q_{\mu,c}}(\cdot)$ is convex, Lipschitz continuous, and piecewise affine over $\mathbb{R}^n$. Define 
\[
D_{\mu,c}:= \left\{x\in\mathbb{R}^n\mid {\rm Prox}_{\lambda q_{\mu,c}}(\cdot) \mbox{ is differentiable at } x \right\}.
\]
We shall prove in the next proposition that $D_{\mu,c} = D_w$. Then it follows from Proposition \ref{sec: bdiff_prox} that $x\in D_{\mu,c}$, $x\in D_w$ and $\beta(x) = \emptyset$ are all equivalent.

\begin{proposition}
	\label{prop:d_prox}
	Suppose $c\neq 0$. For any $x\in\mathbb{R}^n$, ${\rm Prox}_{\lambda q_{\mu,c}}(\cdot)$ is differentiable at $x$ if and only if the index set $\beta(x)  = \emptyset$. In fact, for any $x\in D_{\mu,c}$, it holds that
	\begin{equation}
		{\rm Prox}'_{\lambda q_{\mu,c}}(x) = {\rm Diag}(u) - \frac{1}{\sum_{j\in \alpha(x)}\mu_j^2} \tilde{\mu} \tilde{\mu}^\top, \label{eq:ypx}
	\end{equation}
	where $u\in \mathbb{R}^n$ is defined as: $u_i = 1$ for $i\in \alpha(x)$ and $ 0$ otherwise, and $\tilde{\mu}={\rm Diag}(u)\mu$.
\end{proposition}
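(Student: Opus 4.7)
The plan is to deduce everything from the decomposition ${\rm Prox}_{\lambda q_{\mu,c}}(x) = {\rm Prox}_{\lambda\|\cdot\|_1}(x-w(x)\mu)$ in \eqref{eq: prox_lambda_q}, combined with the differentiability characterization of $w(\cdot)$ in Proposition \ref{prop:dbbeta} and the fact that the coordinate-wise soft-thresholding operator ${\rm Prox}_{\lambda\|\cdot\|_1}$ is classically differentiable at any point whose coordinates all avoid $\pm\lambda$, with diagonal Jacobian.

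For the sufficiency direction, I would assume $\beta(x)=\emptyset$ and note that this forces every coordinate of $z:=x-w(x)\mu$ to lie strictly outside $\{-\lambda,\lambda\}$, so ${\rm Prox}_{\lambda\|\cdot\|_1}$ is (classically) differentiable at $z$ with Jacobian ${\rm Diag}(u)$ where $u$ is the indicator vector of $\alpha(x)$. Simultaneously, Proposition \ref{prop:dbbeta} gives differentiability of $w(\cdot)$ at $x$ with $w'(x)=\tilde\mu^\top/\sum_{j\in\alpha(x)}\mu_j^2$. A single application of the chain rule to the smooth composition yields
\[
{\rm Prox}'_{\lambda q_{\mu,c}}(x) = {\rm Diag}(u)\bigl(I-\mu w'(x)\bigr) = {\rm Diag}(u) - \bigl({\rm Diag}(u)\mu\bigr) w'(x),
\]
and the identity ${\rm Diag}(u)\mu=\tilde\mu$ converts this into the asserted formula \eqref{eq:ypx}.

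For the necessity direction, I would argue by exhibiting two distinct elements of $\partial_B{\rm Prox}_{\lambda q_{\mu,c}}(x)$, which rules out differentiability. Fix any $j\in\beta(x)$; without loss of generality $j\in\beta_+(x)$. I would recycle the sequence construction from the proof of Theorem \ref{thm:partialBw}(b) twice: once with $(\beta_+'(x),\beta_-'(x))=(\emptyset,\emptyset)$, producing $x^{k,1}\to x$ with $\alpha(x^{k,1})=\alpha(x)$ and $\beta(x^{k,1})=\emptyset$; and once with $(\beta_+'(x),\beta_-'(x))=(\{j\},\emptyset)$, producing $x^{k,2}\to x$ with $\alpha(x^{k,2})=\alpha(x)\cup\{j\}$ and $\beta(x^{k,2})=\emptyset$. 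Both sequences lie in $D_{\mu,c}$ by the already-established sufficiency direction, so formula \eqref{eq:ypx} applies at each $x^{k,\ell}$. Inspecting the $(j,j)$-entry: along $\{x^{k,1}\}$ it is identically $0$ (both summands vanish since $j\notin\alpha(x^{k,1})$), whereas along $\{x^{k,2}\}$ it equals $1-\mu_j^2/\sum_{i\in\alpha(x)\cup\{j\}}\mu_i^2$, which is strictly positive because $\alpha(x)\neq\emptyset$. Passing to the limit produces two distinct matrices in $\partial_B{\rm Prox}_{\lambda q_{\mu,c}}(x)$, contradicting differentiability.

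The main obstacle is not really conceptual, since the heavy lifting has already been done in Theorem \ref{thm:partialBw}, which supplies perturbation sequences that keep the dual multiplier pinned while toggling individual $\beta$-indices into either $\alpha$ or $\gamma$. The only genuine check is the strict inequality $\mu_j^2/\sum_{i\in\alpha(x)\cup\{j\}}\mu_i^2 < 1$, which relies on $\alpha(x)$ being non-empty; this is precisely the one spot where the hypothesis $c\neq 0$ is invoked in the necessity argument.
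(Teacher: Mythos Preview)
Your sufficiency argument and the derivation of formula \eqref{eq:ypx} match the paper's proof exactly. Your necessity argument is correct but takes a different route from the paper's.

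The paper's necessity direction is shorter and more conceptual: it picks any index $i_0\in\alpha(x)$ (non-empty since $c\neq 0$), observes that by continuity of $w(\cdot)$ the coordinate $\bigl({\rm Prox}_{\lambda q_{\mu,c}}(\,\cdot\,)\bigr)_{i_0}$ equals $x_{i_0}-w(x)\mu_{i_0}\mp\lambda$ on a full neighborhood of the given point, and reads off that differentiability of this scalar function forces differentiability of $w(\cdot)$ itself---contradicting Proposition~\ref{prop:dbbeta}. In other words, the paper reduces the necessity direction for ${\rm Prox}_{\lambda q_{\mu,c}}$ directly back to the already-proven equivalence $D_w=\{x:\beta(x)=\emptyset\}$, without touching any perturbation construction. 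Your approach instead re-invokes the explicit sequences from Theorem~\ref{thm:partialBw}(b) and computes two distinct limiting Jacobians by comparing $(j,j)$-entries; this is correct and self-contained, and in effect previews the construction behind Theorem~\ref{thm:partialBprox}(b), but it is longer and does more work than necessary given what Proposition~\ref{prop:dbbeta} already hands you.
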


\begin{proof}{Proof}
	\textbf{($\Leftarrow$) Suppose $\beta(x)=\emptyset$.} For $x\in\mathbb{R}^n$ with $\beta(x) = \emptyset$, we know from Proposition \ref{prop:dbbeta} that $w(\cdot)$ is differentiable at $x$. Meanwhile, the definition of $\beta(x)$ in \eqref{def: alpha_beta_gamma} in  further implies that ${\rm Prox}_{\lambda \|\cdot\|_1}(\cdot)$ is differentiable at $x - w(x)\mu$. Thus, as the composition of $w(\cdot)$ and ${\rm Prox}_{\lambda \|\cdot\|_1}(\cdot)$, ${\rm Prox}_{\lambda q_{\mu,c}}(\cdot)$ in \eqref{eq: prox_lambda_q} is differentiable at $x$.
	
	\textbf{($\Rightarrow$) Suppose $\hat{x}\in D_{\mu,c}$.} We prove $\beta(\hat{x})=\emptyset$ by contradiction. Suppose instead $\beta(\hat{x}) \neq \emptyset$. As $c\neq 0$ implies that $\alpha(\hat{x})\neq \emptyset$, we can choose $i_0\in \alpha(\hat{x})$. Without loss of generality, we assume $i_0\in \alpha_+(\hat{x})$. Then there exists a neighborhood $\mathcal{B}$ of $\hat{x}$ such that $i_0 \in \alpha_+(x)$ for all $x\in \mathcal{B}$. Thus, according to \eqref{eq: prox_lambda_q}, for any $x\in \mathcal{B}$, we have 
	$$
	({\rm Prox}_{\lambda q_{\mu,c}}(x))_{i_0} =  x_{i_0} - w(x)\mu_{i_0} - \lambda.
	$$
	Since $\hat{x}\in D_{\mu,c}$, we have that $({\rm Prox}_{\lambda q_{\mu,c}}(x))_{i_0}$ differentiable at $\hat x$, which implies the differentiability of $w(\cdot)$ at $\hat x$. This contradicts   Proposition \ref{prop:dbbeta}.
	
	For any $x\in D_{\mu,c}=D_w$, by the  chain-rule and equation \eqref{eq: prox_lambda_q}, we have 
	\begin{equation*}
		{\rm Prox}'_{\lambda q_{\mu,c}}(x) = {\rm {\rm Diag}}(u) \big(I_n - \mu w'(x)\big).
	\end{equation*}
	According to \eqref{eq: w_derivative} in Proposition \ref{prop:dbbeta}, it holds that
	\begin{equation*}
		{\rm Prox}'_{\lambda q_{\mu,c}}(x) =  {\rm {\rm Diag}}(u) \left(I_n\! -\! \frac{1}{\sum_{j\in \alpha(x)}\mu_j^2} \mu \mu^\top {\rm Diag}(u) \right) = {\rm Diag}(u) \!-\! \frac{1}{\sum_{j\in \alpha(x)}\mu_j^2} \tilde{\mu} \tilde{\mu}^\top,
	\end{equation*}
	which complete the proof. 
\end{proof}

Based on the above established results, we characterize the B-subdifferential of the proximal mapping ${\rm Prox}_{\lambda q_{\mu,c}}(\cdot)$ in the next theorem.
\begin{theorem}\label{thm:partialBprox}
	Suppose $c\neq 0$. For any $x\in \mathbb{R}^n$, we have the following results.
	\begin{enumerate}[label=(\alph*),leftmargin=2em]
		\item It holds that
		\[
		\partial_B {\rm Prox}_{\lambda q_{\mu,c}}(x) := \left\{ \lim_{k\rightarrow \infty} {\rm Prox}'_{\lambda q_{\mu,c}}(x^k)\ \middle\vert \ x^k \rightarrow x, x^k \in D_{\mu,c}
		\right\} \subseteq {\cal N}(x),
		\]
		where 
		\begin{equation*}
			{\cal N}(x) =
			\left\{
			{\rm Diag}(u) - \frac{1}{s} \tilde{\mu} \tilde{\mu}^\top \middle\vert\
			\begin{aligned}
				&  u_i = 1 \text{ if } i\in \mathcal{S}(x), \text{and } 0 \text{ otherwise}, \ i\in [n]\\
				&\tilde{\mu}={\rm Diag}(u)\mu, \quad s=  \sum\nolimits_{j\in {\cal S}(x)}\mu_j^2,\\
				&\alpha(x) \subseteq {\cal S}(x)\subseteq [n]\backslash \gamma(x) \\
			\end{aligned}
			\right\}.
		\end{equation*}
		\item For any subsets $\beta_+'(x)\subseteq \beta_+(x)$ and $\beta_-'(x)\subseteq \beta_-(x)$,
		define $u^*\in \mathbb{R}^n$ as $(u^*)_i = 1$ if $i\in  \alpha(x) \cup \beta_+'(x)\cup\beta_-'(x)$, and $0$ otherwise, and let $s^* = \sum_{j\in\alpha(x)\cup \beta_+'(x)\cup \beta_-'(x) }\mu_j^2$, $\mu^* = {\rm Diag}(u^*) \mu$. Then, we have
		\[ 
		{\rm Diag}(u^*) - \frac{1}{s^*} \mu^* (\mu^*)^\top  \in \partial_B {\rm Prox}_{\lambda q_{\mu,c}}(x).
		\]
		\item We have that $\partial_B {\rm Prox}_{\lambda q_{\mu,c}}(x) = {\cal N}(x)$.
	\end{enumerate}
\end{theorem}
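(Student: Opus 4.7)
The plan is to mirror the three-part structure of Theorem \ref{thm:partialBw} by exploiting the explicit derivative formula \eqref{eq:ypx} together with the identity $D_{\mu,c}=D_w$ established in Proposition \ref{prop:d_prox}. The whole argument is essentially a lifting of the $w(\cdot)$-analysis through the chain rule, so the work reduces to carefully tracking index sets along approaching sequences.

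For part (a), I would take $V \in \partial_B {\rm Prox}_{\lambda q_{\mu,c}}(x)$ and a sequence $\{x^k\} \subseteq D_{\mu,c} = D_w$ with $x^k \to x$ and ${\rm Prox}'_{\lambda q_{\mu,c}}(x^k) \to V$. By Proposition \ref{prop:d_prox}, $\beta(x^k) = \emptyset$ and
\[
{\rm Prox}'_{\lambda q_{\mu,c}}(x^k) = {\rm Diag}(u^k) - \frac{1}{\sum_{j\in \alpha(x^k)}\mu_j^2}\, \tilde{\mu}^k (\tilde{\mu}^k)^\top,
\]
where $u^k_i = 1$ on $\alpha(x^k)$ and zero otherwise, and $\tilde{\mu}^k = {\rm Diag}(u^k)\mu$. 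By the same continuity argument already used in Theorem \ref{thm:partialBw}(a), based on the continuity of $w(\cdot)$ and the definitions in \eqref{def: alpha_beta_gamma}, for all sufficiently large $k$ we have $\alpha(x) \subseteq \alpha(x^k) \subseteq [n]\setminus\gamma(x)$. Since only finitely many index subsets arise, passing to a subsequence yields a fixed set ${\cal S}(x)$ with $\alpha(x^k) \equiv {\cal S}(x)$ and $\alpha(x) \subseteq {\cal S}(x) \subseteq [n]\setminus\gamma(x)$. The matrices above are then constant along this subsequence and the limit matches the form defining ${\cal N}(x)$, giving $V \in {\cal N}(x)$.

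For part (b), I would reuse the perturbation sequence $\{x^k\} = \{t^k + x\}$ constructed in the proof of Theorem \ref{thm:partialBw}(b), for which $w(x^k) = w(x)$, $\beta(x^k) = \emptyset$, and $\alpha(x^k) = \alpha(x) \cup \beta_+'(x) \cup \beta_-'(x)$ for all large $k$. In particular $x^k \in D_w = D_{\mu,c}$, so ${\rm Prox}_{\lambda q_{\mu,c}}(\cdot)$ is differentiable at each such $x^k$, and its derivative, via \eqref{eq:ypx}, equals the constant matrix ${\rm Diag}(u^*) - (1/s^*) \mu^*(\mu^*)^\top$ for all large $k$. Taking the limit places this matrix in $\partial_B {\rm Prox}_{\lambda q_{\mu,c}}(x)$.

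Part (c) then follows by combining the two inclusions: every element of ${\cal N}(x)$ corresponds to an admissible ${\cal S}(x)$ with $\alpha(x) \subseteq {\cal S}(x) \subseteq [n] \setminus \gamma(x)$, which is realized by part (b) via the choice $\beta_+'(x) = {\cal S}(x) \cap \beta_+(x)$ and $\beta_-'(x) = {\cal S}(x) \cap \beta_-(x)$, so ${\cal N}(x) \subseteq \partial_B {\rm Prox}_{\lambda q_{\mu,c}}(x)$. The main obstacle I anticipate is the bookkeeping in part (a): I need to confirm the subsequence extraction is benign, i.e., that the limiting denominator $\sum_{j \in {\cal S}(x)} \mu_j^2$ is nonzero (which it is, since $\alpha(x) \subseteq {\cal S}(x)$ and $\alpha(x) \neq \emptyset$ whenever $c \neq 0$), and that $u^k$ and $\tilde{\mu}^k$ stabilize consistently with the same set ${\cal S}(x)$. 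Beyond this, everything is a direct transcription of the reasoning already developed for $w(\cdot)$.
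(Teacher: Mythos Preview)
Your proposal is correct and follows essentially the same approach as the paper: both proofs lift the analysis of Theorem~\ref{thm:partialBw} via the identity $D_{\mu,c}=D_w$ and the explicit derivative formula \eqref{eq:ypx}, reuse the same perturbation sequence from Theorem~\ref{thm:partialBw}(b) for part~(b), and combine the two inclusions for part~(c). The only cosmetic difference is that in part~(a) you pass to a subsequence on which $\alpha(x^k)$ stabilizes, whereas the paper instead takes the limit $s=\lim_k\sum_{j\in\alpha(x^k)}\mu_j^2$ and identifies a matching set ${\cal S}(x)$; both are equivalent ways of handling the same finiteness argument.
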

\begin{proof}{Proof}
	(a) For any $Q\in \partial_B {\rm Prox}_{\lambda q_{\mu,c}}(x)$, we will show that $Q\in {\cal N}(x)$. According to Proposition \ref{prop:d_prox}, we know that $D_{\mu,c} = D_w$. From the definition of $\partial_B {\rm Prox}_{\lambda q_{\mu,c}}(x)$, there exists a sequence $\{x^k\} \subseteq D_{\mu,c} = D_w$ such that $x^k\to x$ and ${\rm Prox}'_{\lambda q_{\mu,c}}(x^k) \to Q$. From Proposition \ref{prop:d_prox}, we know that
	\begin{equation}
		{\rm Prox}'_{\lambda q_{\mu,c}}(x^k) ={\rm Diag}(u^k) - \frac{1}{\sum_{j\in \alpha(x^k)}\mu_j^2} \tilde{\mu}^k (\tilde{\mu}^k)^\top , \label{eq: prox_diff}
	\end{equation}
	where $(u^k)_i = 1$ for $i\in \alpha(x^k)$, $0$ for $i\in \gamma(x^k) = [n]\backslash \alpha(x^k)$, and $\tilde{\mu}^k={\rm Diag}(u^k)\mu$. Similarly as in Theorem \ref{thm:partialBw}, by defining $s = \lim_{k\to \infty} \sum_{j\in \alpha(x^k)}\mu_j^2$, we can see that there must exist a set ${\cal S}(x)$ such that $\alpha(x) \subseteq {\cal S}(x)\subseteq [n]\backslash \gamma(x)$ and $s=\sum_{j\in {\cal S}(x)}\mu_j^2$. Define $u\in \mathbb{R}^n$ as $u_i = 1$ if $i\in \mathcal{S}(x)$, and $0$ otherwise. Then we further have $Q = {\rm Diag}(u) - \frac{1}{s}\tilde{\mu} \tilde{\mu}^\top$ with $\tilde{\mu}={\rm Diag}(u)\mu$. That is, $Q\in {\cal N}(x)$.
	
	Part (b) can be obtained via the same construction as in part (b) of Theorem \ref{thm:partialBw}, and Part (c) follows directly by combining (a) and (b).  
\end{proof}

\subsection{Discussion of the case $c = 0$}
\label{sec: bdiff_c0}
In this subsection, we focus on the case $c =0$. Unlike the case $c\neq 0$, the dual multiplier $w$ may not be unique for a given $x\in \mathbb{R}^n$, as discussed in the following proposition.

\begin{proposition}
	\label{prop: c0_prox}
	Suppose $c=0$. For any $x\in \mathbb{R}^n$, define 
	\begin{equation}
		E_{L}(x) = \max_{i\in [n]}\ \left(\frac{x_i}{\mu_i} - \frac{\lambda}{|\mu_i|}\right),\quad 
		E_{R}(x) = \min_{i\in [n]}\ \left(\frac{x_i}{\mu_i} + \frac{\lambda}{|\mu_i|}\right). 
		\label{eq: def_endpoints}
	\end{equation}
	We have the following conclusions.
	\begin{itemize}[leftmargin=2em]
		\item[(i)] If $E_{L}(x) > E_{R}(x)$, then there exists a unique dual multiplier $w$ which satisfies $f(x,w)=0$, as defined in \eqref{eq:prox_qx_C_opt}.
		\item[(ii)] If $E_{L}(x) \leq E_{R}(x)$, then ${\rm Prox}_{\lambda q_{\mu,c}}(x) =0$.
	\end{itemize}
\end{proposition}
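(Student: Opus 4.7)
The strategy is to exploit the geometry of the single-coordinate soft-thresholding intervals. For each $i\in[n]$, the scalar map $w\mapsto s(x_i,w\mu_i)$ vanishes precisely on the closed interval
\[
I_i := \left[\, \tfrac{x_i}{\mu_i}-\tfrac{\lambda}{|\mu_i|},\ \tfrac{x_i}{\mu_i}+\tfrac{\lambda}{|\mu_i|}\,\right],
\]
and is nonzero strictly outside $I_i$. A direct inspection yields $\bigcap_{i\in[n]} I_i = [E_L(x),E_R(x)]$ when $E_L(x)\leq E_R(x)$, and $\bigcap_{i\in[n]} I_i = \emptyset$ otherwise. The two cases in the statement correspond exactly to these two scenarios, and I would treat them in turn.

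For part (ii), I would pick any $w^\star \in [E_L(x),E_R(x)]$; then $w^\star \in I_i$ for every $i$, so every coordinate of ${\rm Prox}_{\lambda\|\cdot\|_1}(x-w^\star\mu)$ is zero, giving $f(x,w^\star)=0=c$. Applying formula \eqref{eq: prox_qx_C} then produces ${\rm Prox}_{\lambda q_{\mu,c}}(x)=0$ immediately.

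For part (i), since $\bigcap_i I_i = \emptyset$, for every $w\in\mathbb{R}$ there exists some index $i$ with $w\notin I_i$, equivalently $|x_i-w\mu_i|>\lambda$, so $\alpha(x,w)\neq\emptyset$. By \eqref{eq: f_in _s}, the slope $-\sum_{i\in\alpha(x,w)}\mu_i^2$ of the continuous piecewise-affine function $f(x,\cdot)$ is then strictly negative on every open linear piece, which makes $f(x,\cdot)$ strictly decreasing on all of $\mathbb{R}$. Existence of some $w$ with $f(x,w)=0$ is already supplied by Proposition \ref{prop:wx-exist}, and strict monotonicity immediately rules out a second root.

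The hard part, if any, is the translation from \emph{slope nonzero on every open piece} to \emph{strict global monotonicity} across the finitely many breakpoints; but this is routine for continuous piecewise-affine functions and follows from the fact that the slope cannot vanish on any open interval, which is exactly what the emptiness of $\bigcap_i I_i$ encodes. Everything else is bookkeeping.
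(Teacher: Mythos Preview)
Your proof of part (ii) is essentially identical to the paper's: pick any $w$ in the nonempty intersection $\bigcap_i I_i$, observe that every coordinate of ${\rm Prox}_{\lambda\|\cdot\|_1}(x-w\mu)$ vanishes, and apply \eqref{eq: prox_qx_C}.

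For part (i), your approach is correct but differs from the paper's. You argue \emph{directly} that $f(x,\cdot)$ is strictly decreasing on all of $\mathbb{R}$: since $\bigcap_i I_i=\emptyset$, every $w$ lies outside some closed $I_i$, so $\alpha(x,w)\neq\emptyset$ and the slope $-\sum_{i\in\alpha(x,w)}\mu_i^2$ is strictly negative on each open linear piece; continuity then forces strict monotonicity globally, and uniqueness of the root follows. The paper instead argues \emph{by contradiction}, recycling the computation from the proof of Proposition~\ref{prop:wx-unique}: if $w_1<w_2$ were both roots, monotonicity of $f(x,\cdot)$ forces $f(x,w)=0$ on $[w_1,w_2]$, which in turn forces $s(x_i,w_1\mu_i)=s(x_i,w_2\mu_i)$ for every $i$, hence $w_1,w_2\in I_i$ for all $i$, contradicting $E_L(x)>E_R(x)$. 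Your route is more self-contained and gives the slightly stronger conclusion that $f(x,\cdot)$ is strictly decreasing everywhere; the paper's route is shorter because it reuses earlier work verbatim. Both are valid.
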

\begin{proof}{Proof}
	(i) The existence is guaranteed by Proposition \ref{prop:wx-exist}, it remains to prove the uniqueness, which we establish via contradiction. Suppose there exist $w_1<w_2$ both satisfying \eqref{eq:prox_qx_C_opt}. Then, by the argument in the proof of Proposition \ref{prop:wx-unique}, we have
	\begin{equation}
		w_1, w_2 \subseteq J_i:=\left[\frac{x_i}{\mu_i}-\frac{\lambda}{|\mu_i|}, \frac{x_i}{\mu_i}+\frac{\lambda}{|\mu_i|}\right],\quad \mbox{for } i\in [n],  \label{eq: def_Ji}
	\end{equation}
	which contradicts $E_{L}(x) > E_{R}(x)$. Hence, the dual multiplier $w$ is unique. 
	
	(ii) If $E_{L}(x) \leq E_{R}(x)$, then $\bigcap_{i=1}^n J_i$ is non-empty, where the set $J_i$ is defined in \eqref{eq: def_Ji}. For any $w$ in this intersection set, we have $ {\rm Prox}_{\lambda |\cdot|}(x_i-w \mu_i) = 0$ for all $i\in[n]$. This, together with \eqref{eq: prox_qx_C}, implies ${\rm Prox}_{\lambda q_{\mu,c}}(x) = {\rm Prox}_{\lambda \|\cdot\|_1}(x-w \mu)=0$. 
\end{proof}

Based on the above results, we state the following theorem on the B-subdifferential of ${\rm Prox}_{\lambda q_{\mu,c}}(\cdot)$.
\begin{theorem}\label{thm: c0Bdiff_prox}
	Suppose $c=0$. For any $x\in \mathbb{R}^n$, we have
	\begin{equation*}
		\partial_B \mathrm{Prox}_{\lambda q_{\mu,c}}(x) 
		\begin{cases}
			=\mathcal{N}(x), & \text{if } E_L(x) > E_R(x), \\
			=\{0_{n\times n}\} , & \text{if } E_L(x) < E_R(x), \\
			\ni \{0_{n\times n}\} , & \text{otherwise}.
		\end{cases}
	\end{equation*}
	where ${\cal N}(\cdot)$ is defined in Theorem \ref{thm:partialBprox}.
\end{theorem}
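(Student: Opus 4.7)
The plan is to split the argument along the three cases of Proposition \ref{prop: c0_prox} and exploit that both $E_L(\cdot)$ and $E_R(\cdot)$, being pointwise max/min of finitely many affine functions, are continuous in $x$. In the case $E_L(x) > E_R(x)$, I will show that the entire analysis developed for $c \neq 0$ transfers verbatim to an open neighborhood $U$ of $x$ on which the strict inequality persists. On $U$, Proposition \ref{prop: c0_prox}(i) provides uniqueness of the dual multiplier $w(x')$, and $\alpha(x') \neq \emptyset$ follows since $w(x')$ cannot lie in the empty intersection $\bigcap_i J_i(x')$. These are precisely the two ingredients for which the hypothesis $c \neq 0$ was invoked in Propositions \ref{prop:wx_continuous}--\ref{prop:d_prox} and Theorems \ref{thm:partialBw}--\ref{thm:partialBprox}; all subsequent expressions for $w(x')$, ${\rm Prox}_{\lambda q_{\mu,c}}(x')$, ${\rm Prox}'_{\lambda q_{\mu,c}}(x')$, and their B-subdifferentials depend only on the local index sets $\alpha,\beta,\gamma$. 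Hence the proof of Theorem \ref{thm:partialBprox} applies verbatim to yield $\partial_B {\rm Prox}_{\lambda q_{\mu,c}}(x) = \mathcal{N}(x)$.

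In the case $E_L(x) < E_R(x)$, continuity again provides a neighborhood on which the strict inequality is preserved, and Proposition \ref{prop: c0_prox}(ii) then forces ${\rm Prox}_{\lambda q_{\mu,c}}(\cdot) \equiv 0$ on this neighborhood. The proximal mapping is therefore trivially differentiable at $x$ with vanishing derivative, so $\partial_B {\rm Prox}_{\lambda q_{\mu,c}}(x) = \{0_{n\times n}\}$.

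In the boundary case $E_L(x) = E_R(x) =: t$, I would construct a sequence $x^k \to x$ with $E_L(x^k) < E_R(x^k)$; the previous case applied at each $x^k$ then gives $x^k \in D_{\mu,c}$ and ${\rm Prox}'_{\lambda q_{\mu,c}}(x^k) = 0$, whence $0_{n\times n} \in \partial_B {\rm Prox}_{\lambda q_{\mu,c}}(x)$. Letting $I_L := \{i : x_i/\mu_i - \lambda/|\mu_i| = t\}$ and $I_R := \{i : x_i/\mu_i + \lambda/|\mu_i| = t\}$, the disjointness $I_L \cap I_R = \emptyset$ is immediate, since an overlap would force $\lambda/|\mu_i| = 0$. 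Taking $d \in \mathbb{R}^n$ with $d_i = -\mu_i$ on $I_L$, $d_i = +\mu_i$ on $I_R$, and $d_i = 0$ elsewhere, and $x^k := x + d/k$, a routine computation separating the indices in $I_L$, $I_R$, and their complement yields $E_L(x^k) = t - 1/k$ and $E_R(x^k) = t + 1/k$ for sufficiently large $k$, as required.

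The main technical obstacle is the bookkeeping in the first case: tracking where in each earlier proof the assumption $c \neq 0$ was actually used, to confirm that only local uniqueness of $w(\cdot)$ and local non-emptiness of $\alpha(\cdot)$ are truly required. Once this accounting is clean, the remaining two cases follow quickly from Proposition \ref{prop: c0_prox} and the simple disjointness construction above.
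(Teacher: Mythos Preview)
Your proposal is correct and follows essentially the same approach as the paper: the paper likewise treats the three cases separately, invokes openness of $\{E_L > E_R\}$ to transplant the $c\neq 0$ analysis, uses local constancy on $\{E_L < E_R\}$, and in the boundary case perturbs $x$ into the strict-inequality region. The only minor difference is cosmetic: in the boundary case the paper perturbs only the indices in your $I_L$ (by $-\lambda\,\mathrm{sign}(\mu_i)/k$) and verifies $E_L(x^k) < E_R(x) \le E_R(x^k)$, whereas you perturb both $I_L$ and $I_R$ symmetrically; both constructions work.
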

\begin{proof}{Proof}
	When $E_{L}(x) > E_{R}(x)$, from Proposition \ref{prop: c0_prox}, we know that there exists a unique multiplier $w$ such that \eqref{eq:prox_qx_C_opt} holds. Since the set $\{x\in \mathbb{R}^n \mid E_{L}(x) > E_{R}(x)\}$ is open, we can apply the same reasoning as in Sections \ref{sec: bdiff_wx} and \ref{sec: bdiff_prox} to conclude that $\partial_B {\rm Prox}_{\lambda q_{\mu,c}}(x) = {\cal N}(x)$. The details are analogous and omitted for brevity.
	
	If $E_{L}(x) < E_{R}(x)$, we know from Proposition \ref{prop: c0_prox} that ${\rm Prox}_{\lambda q_{\mu,c}}(x) =0$. This means that ${\rm Prox}_{\lambda q_{\mu,c}}(\cdot)$ is locally constant in the open set $\{x\in \mathbb{R}^n \mid E_{L}(x) < E_{R}(x)\}$, thus it is differentiable with  ${\rm Prox}'_{\lambda q_{\mu,c}}(x) = 0_{n\times n}$.
	
	Lastly, we consider the case when $E_{L}(x) = E_{R}(x)$. Denote the set 
	$$
	\Omega = \underset{i\in [n]}{\arg\max} \left(x_i/\mu_i - \lambda/|\mu_i|\right),
	$$
	and define a sequence $\{t^k\}\subseteq \mathbb{R}^n$ as
	\begin{equation*}
		(t^k)_i = 
		\begin{cases}
			- \lambda{\rm sign}(\mu_i)/k & \mbox{ if $ i\in \Omega $} \\
			0  & \mbox{ if $ i\in  [n]\backslash \Omega $} \\
		\end{cases}.
	\end{equation*}
	Then for $x^k:=x+t^k$, we have $x^k\rightarrow x$. Moreover, for any $k\geq 1$ and $i\in G$,
	\begin{equation*}
		\begin{aligned}
		&\frac{(x^k)_i}{\mu_i} - \frac{\lambda}{|\mu_i|}  = \frac{x_i}{\mu_i} - \frac{\lambda}{|\mu_i|} - \frac{\lambda}{k|\mu_i|} < \frac{x_i}{\mu_i} - \frac{\lambda}{|\mu_i|}  = E_L(x),\\
		&\frac{(x^k)_i}{\mu_i} + \frac{\lambda}{|\mu_i|}  = \frac{x_i}{\mu_i} + \frac{\lambda}{|\mu_i|} - \frac{\lambda}{k|\mu_i|}\geq \frac{x_i}{\mu_i}-\frac{\lambda}{|\mu_i|} = E_L(x)  = E_R(x).
		\end{aligned}
	\end{equation*}
	This means for any $k\geq 1$, we have $
	E_{L}(x^k) <E_L(x) =E_R(x)\leq  E_{R}(x^k)$,
	and thus ${\rm Prox}'_{\lambda q_{\mu,c}}(x^k) = 0_{n\times n}$. Therefore, we can see that
	$ 0_{n\times n} \in \partial_B {\rm Prox}_{\lambda q_{\mu,c}}(x) $. 
\end{proof}

\section{Double-loop algorithm for affine-constrained sparse optimization}
\label{sec: alg}
In this section, we apply the preconditioned proximal point algorithm (PPA) to solve the optimization problem \eqref{eq: l1_linear_constraint_obj}, which combines a general loss function with an affine-constrained $\ell_1$-regularization term. The success of PPA in large-scale nonsmooth optimization depends crucially on the efficient solutions of a sequence of subproblems. Building on the insights into the B-subdifferential of ${\rm Prox}_{\lambda q_{\mu,c}}(\cdot)$ established in the previous section, we develop a semismooth Newton-type method tailored to the subproblems. This yields a double-loop algorithm capable of tackling \eqref{eq: l1_linear_constraint_obj} with high efficiency and accuracy.

\subsection{Outer loop: preconditioned PPA}
For the problem \eqref{eq: l1_linear_constraint_obj}, the preconditioned PPA generates a sequence \( \{x^k\} \) by solving:
\begin{equation}
	x^{k+1} \approx \underset{x \in \mathbb{R}^n}{\arg\min} \left\{F_k(x):= F(x) + \frac{1}{2\sigma_k} \|x - x^k\|^2 +  \frac{\tau}{2\sigma_k} \|Ax - Ax^k\|^2 \right\}, \label{eq: ppa_subproblem}
\end{equation}
where $\tau > 0$ is a preset constant and $\{\sigma_k\}$ is a nondecreasing sequence of positive real numbers. Any such choice guarantees the convergence of iterates \eqref {eq: ppa_subproblem} established in Theorem \ref{thm:convergence_PPA}; in our implementation, we take $\tau = 1/\lambda_{\max}(A A^\top)$ and $\sigma_k = 3^{\lfloor k/2 \rfloor}$.

To design an efficient algorithm to solve the affine-constrained nonsmooth PPA subproblem \eqref{eq: ppa_subproblem}, the following results \cite{moreau1965proximite,rockafellar1976monotone} regarding the proximal mapping and the Moreau envelope will be useful. 
\begin{lemma}\label{lemma:prox-known-facts}
	For a closed proper convex function $h(\cdot)$, its Moreau envelope is 
	$$
	{\rm E}_{\alpha h}(x) = \min_u \left\{ \alpha h(u)+\frac{1}{2}\|u-x\|^2\right\},\quad \mbox{for any }\alpha >0.
	$$
	The strong convexity of the objective ensures that it is well defined and admits a unique minimizer, denoted as ${\rm Prox}_{\alpha h}(x)$. 
	Moreover, ${\rm E}_{\alpha h}$ is smooth with 
	\[
	\nabla {\rm E}_{\alpha h}(x) = x-{\rm Prox}_{\alpha h}(x) = \alpha {\rm Prox}_{h^*/\alpha}(x/\alpha).
	\] 
	and ${\rm Prox}_{\alpha h}(\cdot)$ is Lipschitz with modulus $1$.
\end{lemma}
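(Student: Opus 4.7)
The plan is to verify the three non-trivial assertions in the lemma—smoothness together with the primal gradient formula, the dual expression via Moreau's identity, and $1$-Lipschitzness of the prox—since well-posedness of the minimizer is an immediate consequence of the $1$-strong convexity and coercivity of $u\mapsto \alpha h(u)+\tfrac{1}{2}\|u-x\|^2$, which is also explicitly asserted in the statement.

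First, I would obtain smoothness of $\mathrm{E}_{\alpha h}$ together with the identity $\nabla \mathrm{E}_{\alpha h}(x)=x-\mathrm{Prox}_{\alpha h}(x)$ by rewriting
\[
\mathrm{E}_{\alpha h}(x) \;=\; \tfrac{1}{2}\|x\|^2 \;-\; \bigl(\alpha h + \tfrac{1}{2}\|\cdot\|^2\bigr)^{\!*}(x),
\]
and invoking the standard fact that the conjugate of a $1$-strongly convex proper closed convex function is finite and continuously differentiable on $\mathbb{R}^n$, with gradient equal to the unique maximizer in the conjugate's definition. That maximizer is precisely $\mathrm{Prox}_{\alpha h}(x)$, and a subtraction yields the desired formula. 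An equally valid route is a direct envelope-theorem argument combined with the optimality condition $x-\mathrm{Prox}_{\alpha h}(x)\in \alpha\partial h(\mathrm{Prox}_{\alpha h}(x))$, which certifies that $x\mapsto x-\mathrm{Prox}_{\alpha h}(x)$ is a continuous selection from the subdifferential of the convex function $\mathrm{E}_{\alpha h}$, forcing differentiability.

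Next, for the equivalent expression $\nabla \mathrm{E}_{\alpha h}(x) = \alpha\mathrm{Prox}_{h^*/\alpha}(x/\alpha)$, I would invoke Moreau's decomposition $x = \mathrm{Prox}_{f}(x) + \mathrm{Prox}_{f^*}(x)$ applied to $f=\alpha h$, together with the conjugate-scaling identity $(\alpha h)^*(y) = \alpha h^*(y/\alpha)$ and the corresponding reparametrization of the prox. Finally, the $1$-Lipschitz property of $\mathrm{Prox}_{\alpha h}$ follows from firm nonexpansiveness: for any $x_1,x_2$ with $u_i := \mathrm{Prox}_{\alpha h}(x_i)$, the inclusions $x_i-u_i\in \alpha\partial h(u_i)$ together with monotonicity of $\partial h$ give $\|u_1-u_2\|^2 \le \langle x_1-x_2,\,u_1-u_2\rangle$, after which Cauchy--Schwarz closes the bound. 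The only subtle bookkeeping lies in keeping the $\alpha$'s straight in the conjugate-scaling step; there is no serious conceptual obstacle, as everything needed is classical and could alternatively be quoted verbatim from the cited references of Moreau and Rockafellar.
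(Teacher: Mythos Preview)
Your proposal is correct; each of the three pieces---the conjugate-of-strongly-convex argument for smoothness and the primal gradient formula, the Moreau decomposition plus the scaling identity $(\alpha h)^*(y)=\alpha h^*(y/\alpha)$ for the dual expression, and firm nonexpansiveness for the $1$-Lipschitz bound---is a standard and valid route, and you have kept the $\alpha$'s straight in the reparametrization $\mathrm{Prox}_{(\alpha h)^*}(x)=\alpha\,\mathrm{Prox}_{h^*/\alpha}(x/\alpha)$.

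The paper, however, does not supply a proof of this lemma at all: it simply records these facts as classical and cites Moreau (1965) and Rockafellar (1976). So there is no ``paper's approach'' to compare against beyond the citations. Your write-up is therefore more than what the paper provides; if anything, the final sentence of your proposal (that everything could be quoted verbatim from the cited references) is exactly how the paper handles it.
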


Following \cite[Example 11.46]{rockafellar2009variational}, the Lagrangian function associated with \eqref{eq: ppa_subproblem} is 
\begin{equation*}
	\begin{aligned}
	& \ell(x;y) \!=\! \inf_{z\in \mathbb{R}^m} \left\{  
	f(Ax\!-\!z) + \lambda q_{\mu,c}(x) + \frac{1}{2\sigma_k} \|x \!- \!x^k\|^2 +  \frac{\tau}{2\sigma_k} \|Ax\!-\!z \!-\! Ax^k\|^2 + \langle y,z\rangle
	\right\}\\
	& = \inf_{\tilde{z}\in \mathbb{R}^m}\ \left\{  
	f(\tilde{z}) + \lambda q_{\mu,c}(x) + \frac{1}{2\sigma_k} \|x - x^k\|^2 +  \frac{\tau}{2\sigma_k} \|\tilde{z} - Ax^k\|^2 + \langle y,Ax\rangle - \langle \tilde{z},y\rangle
	\right\}\\
	& = \frac{\tau}{\sigma_k} {\rm E}_{\sigma_k f/\tau}\left(Ax^k + \frac{\sigma_k}{\tau} y\right) - \frac{\tau}{2\sigma_k} \|Ax^k + \frac{\sigma_k}{\tau} y\|^2 +\frac{\tau}{2\sigma_k} \|Ax^k\|^2  \\
	&\quad \quad + \lambda q_{\mu,c}(x) + \frac{1}{2\sigma_k} \|x - x^k\|^2 + \langle y,Ax\rangle ,
	\end{aligned}
\end{equation*}
for any $(x,y)\in \mathbb{R}^n\times \mathbb{R}^m$. Then, the dual problem of \eqref{eq: ppa_subproblem} takes the form of 
\begin{equation}
	\max_{y\in \mathbb{R}^m} \left\{ G_k(y):=\min_{x\in \mathbb{R}^n}\ \ell(x;y)\right\} \label{eq: ppa_sub_dual} 
\end{equation}
where 
\begin{equation*}
	\begin{aligned}
	G_k(y)&=\frac{\tau}{\sigma_k} {\rm E}_{\sigma_k f/\tau}\left(Ax^k + \frac{\sigma_k}{\tau} y\right) +
	\frac{1}{\sigma_k} {\rm E}_{\sigma_k \lambda q_{\mu,c}} (x^k - \sigma^k A^\top y)\\
	& - \frac{1}{2\sigma_k} \|x^k - \sigma^k A^\top y\|^2 + \frac{1}{2\sigma_k } \|x^k\|^2 - \frac{\tau}{2\sigma_k} \|Ax^k + \frac{\sigma_k}{\tau} y\|^2+\frac{\tau}{2\sigma_k} \|Ax^k\|^2 .
	\end{aligned}
\end{equation*}
And the Karush–Kuhn–Tucker(KKT) conditions associated with \eqref{eq: ppa_subproblem} and \eqref{eq: ppa_sub_dual} are: 
\begin{equation}
	\left\{\begin{aligned}
		&x = {\rm Prox}_{\sigma_k \lambda q_{\mu,c}} (x^k - \sigma_k A^\top y),  \\
		&Ax = {\rm Prox}_{\sigma_k f/\tau} \left(Ax^k + \frac{\sigma_k}{\tau} y\right) . 
	\end{aligned}
	\right. \label{eq: sub_kkt}
\end{equation}
Based on the relationship \eqref{eq: sub_kkt}, in order to solve each PPA subproblem \eqref{eq: ppa_subproblem}, we only need to solve its dual \eqref{eq: ppa_sub_dual}. 

The next theorem shows the convergence result of the preconditioned PPA iterations with dual-based subproblem solutions, following similar augment as  in \cite{lin2024highly}.
\begin{theorem}\label{thm:convergence_PPA}
	Let $\{(x^k,y^k)\}$ be generated by the preconditioned PPA, where at the $k$-th iteration the subproblem is solved via its dual as:
	\begin{equation}
		\left\{
		\begin{aligned}
			y^{k+1}&\approx  \max_{y\in \mathbb{R}^m}\ G_k(y),\\
			x^{k+1} &= {\rm Prox}_{\sigma_k \lambda q_{\mu,c}} (x^k - \sigma_k A^\top y^{k+1}),
		\end{aligned}
		\right.\label{eq: ppa_iter}
	\end{equation}
	subject to the primal–dual gap condition
	\begin{equation}
		F_k(x^{k+1})-G_k(y^{k+1})\leq \frac{\epsilon_k^2}{2\sigma_k}\min
		\left\{
		1, \|x^{k+1}-x^k\|^2+\tau \|Ax^{k+1}-Ax^k\|^2
		\right\},
	\end{equation}
	where $\{\epsilon_k\}$ is a preset summable nonnegative sequence with $\epsilon_k<1$; in our implementation, we take $\epsilon_k = 0.5/1.06^k$. Denote the optimal solution set to \eqref{eq: l1_linear_constraint_obj} as ${\cal X}^*$. Then we have the following conclusions.
	\begin{enumerate}[label=(\alph*),leftmargin=2em]
		\item The sequence $\{x^k\}$ converges to some point in ${\cal X}^*$.
		\item Denote ${\cal M}:=I_n+\tau A^\top A$. Suppose there exists $\kappa>0$, such that for any $x\in \mathbb{R}^n$ with ${\rm dist}(x,{\cal X}^*)\leq \sum_{i=0}^{\infty}\epsilon_k+{\rm dist}_{{\cal M}}(x^0,{\cal X}^*)$, we have
		\begin{equation*}
			{\rm dist}(x,{\cal X}^*)\leq \kappa {\rm dist}(0,\partial F(x)),
		\end{equation*}
		Then there exists a sequence $\{\theta_k\}$ with $0\leq \theta_k < 1$, such that for all sufficiently large $k$, we have
		\begin{equation*}
			{\rm dist}_{{\cal M}}(x^{k+1},{\cal X}^*)\leq \theta_k {\rm dist}_{{\cal M}}(x^{k},{\cal X}^*).
		\end{equation*}
	\end{enumerate}
\end{theorem}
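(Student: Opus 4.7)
The plan is to reduce the primal--dual gap criterion to a standard inexact PPA criterion and then invoke (a preconditioned variant of) Rockafellar's classical convergence analysis. Write $\mathcal{M}=I_n+\tau A^\top A\succ 0$ and $\|v\|_\mathcal{M}^2=\langle v,\mathcal{M} v\rangle$; the subproblem objective rewrites as $F_k(x)=F(x)+\frac{1}{2\sigma_k}\|x-x^k\|_\mathcal{M}^2$, which is $(1/\sigma_k)$-strongly convex in $\|\cdot\|_\mathcal{M}$. Hence $F_k$ admits a unique exact minimizer $\bar x^{k+1}$, and strong duality gives $F_k(\bar x^{k+1})=\max_y G_k(y)$. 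Combining strong convexity with weak duality,
\begin{equation*}
\frac{1}{2\sigma_k}\|x^{k+1}-\bar x^{k+1}\|_\mathcal{M}^2 \le F_k(x^{k+1})-F_k(\bar x^{k+1}) \le F_k(x^{k+1})-G_k(y^{k+1}),
\end{equation*}
and the prescribed gap condition then yields
\begin{equation*}
\|x^{k+1}-\bar x^{k+1}\|_\mathcal{M}\le \epsilon_k\min\{1,\|x^{k+1}-x^k\|_\mathcal{M}\},
\end{equation*}
which is the preconditioned analogue of Rockafellar's inexact criterion (B).

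For part (a), I would carry out a Fej\'er-type analysis in the $\mathcal{M}$-norm. By monotonicity of $\partial F$, the exact resolvent of $F_k$ is firmly nonexpansive in $\|\cdot\|_\mathcal{M}$: for every $x^*\in \mathcal{X}^*$,
\begin{equation*}
\|\bar x^{k+1}-x^*\|_\mathcal{M}^2 \le \|x^k-x^*\|_\mathcal{M}^2 - \|\bar x^{k+1}-x^k\|_\mathcal{M}^2.
\end{equation*}
Combining with the inexactness bound and the summability of $\{\epsilon_k\}$ gives a quasi-Fej\'er estimate $\|x^{k+1}-x^*\|_\mathcal{M}\le \|x^k-x^*\|_\mathcal{M}+\epsilon_k$, which forces boundedness of $\{x^k\}$ and summability of $\{\|x^{k+1}-x^k\|_\mathcal{M}^2\}$ after absorbing the $\epsilon_k$ slack. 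Outer semicontinuity of $\partial F$ then shows every cluster point lies in $\mathcal{X}^*$, and an Opial-type argument forces convergence of the entire sequence to a single element of $\mathcal{X}^*$.

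For part (b), I would exploit the error bound. From the exact optimality condition for $\bar x^{k+1}$ one has $-\sigma_k^{-1}\mathcal{M}(\bar x^{k+1}-x^k)\in \partial F(\bar x^{k+1})$; using the inexactness bound together with the explicit form of the KKT residual of \eqref{eq: sub_kkt}, I would construct $v^{k+1}\in \partial F(x^{k+1})$ satisfying $\|v^{k+1}\|=O\bigl(\sigma_k^{-1}(\|x^{k+1}-x^k\|_\mathcal{M}+\epsilon_k)\bigr)$. The hypothesized error bound then gives, for some constant $C>0$,
\begin{equation*}
{\rm dist}(x^{k+1},\mathcal{X}^*)\le \frac{\kappa C}{\sigma_k}\bigl(\|x^{k+1}-x^k\|_\mathcal{M}+\epsilon_k\bigr),
\end{equation*}
which, inserted into the refined Fej\'er inequality $\|x^{k+1}-x^*\|_\mathcal{M}^2\le \|x^k-x^*\|_\mathcal{M}^2-\|x^{k+1}-x^k\|_\mathcal{M}^2+O(\epsilon_k\|x^{k+1}-x^k\|_\mathcal{M})$ and optimized over $x^*\in \mathcal{X}^*$, produces the contraction ${\rm dist}_\mathcal{M}(x^{k+1},\mathcal{X}^*)\le \theta_k\,{\rm dist}_\mathcal{M}(x^k,\mathcal{X}^*)$ with $\theta_k<1$ for large $k$.

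The main obstacle is ensuring that every estimate is performed in the $\mathcal{M}$-norm rather than the Euclidean norm: firm nonexpansiveness of the exact resolvent must be verified under this non-Euclidean metric, and, more delicately, one must extract a subgradient of $F$ (not merely of $F_k$) of the right order from the inexact KKT residuals in \eqref{eq: sub_kkt}, where only the dual $y^{k+1}$ is inexact while the primal update is by definition a proximal step. Once these two preconditioned-norm estimates are in hand, the remainder of the argument parallels \cite{lin2024highly}.
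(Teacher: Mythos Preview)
Your proposal is correct and follows essentially the same route the paper indicates: the paper does not give its own proof of this theorem but states that it ``follows similar argument as in \cite{lin2024highly}'', and your reduction of the primal--dual gap to Rockafellar's inexact criterion via strong convexity in the $\mathcal{M}$-norm, followed by a Fej\'er/error-bound analysis, is precisely that argument.
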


\begin{remark}
	Note that we allow the PPA parameters $\{\sigma_k\}$ to be dynamically adjusted, potentially based on all past iterates $\{(x^\ell, y^\ell)\}_{l=1}^k$, past parameter values $\{\sigma_\ell\}_{l=1}^k$, as well as running statistics like primal/dual infeasibility norms and duality gap. The above convergence guarantee allows sufficient flexibility for various update schemes. In particular, if $\sigma_k\rightarrow \infty$, the sequence $\{x^k\}$ attains superlinear convergence, meaning that $\{\mu_k\}$ in Theorem \ref{thm:convergence_PPA} tends to zero.
\end{remark}

\subsection{Inner loop: semismooth Newton method}
Note that $G_k(\cdot)$ is concave and smooth, with its optimality condition given by  
\begin{equation}
	\nabla G_k(y) = -{\rm Prox}_{\sigma_k f/\tau} \left(Ax^k + \frac{\sigma_k}{\tau} y\right) + A\, {\rm Prox}_{\sigma_k \lambda q_{\mu,c}}(x^k - \sigma_k A^\top y) = 0.
	\label{eq:opt-condition-subprob}
\end{equation}
In practice, for many commonly used loss functions, such as least squares, logistic, or square-root loss, the proximal mapping ${\rm Prox}_{f} \left(\cdot \right)$ and its Clarke generalized Jacobian $\partial {\rm Prox}_{ f}(\cdot)$ are explicitly computable. Moreover, by our established Algorithm \ref{alg: prox}, Theorems \ref{thm:partialBprox} and \ref{thm: c0Bdiff_prox}, the proximal mapping ${\rm Prox}_{q_{\mu,c}} \left(\cdot \right)$ and its B-subdifferential are also available. Consequently, the optimality condition \eqref{eq:opt-condition-subprob} can be efficiently solved using a semismooth Newton method.

Define the following operator from $\mathbb{R}^m$ to $\mathbb{R}^m$: for any $y\in \mathbb{R}^m$,
\begin{equation*}
	\hat{\partial}^2 G_k(y):= - \frac{\sigma_k}{\tau} \partial {\rm Prox}_{\sigma_k f/\tau} \left(Ax^k + \frac{\sigma_k}{\tau} y\right)  - \sigma_k A\, \partial {\rm Prox}_{\sigma_k \lambda q_{\mu,c}}(x^k - \sigma_k A^\top y) \, A^\top. 
\end{equation*}
Based on the characterization of $\partial_B {\rm Prox}_{q_{\mu,c}}(\cdot)$ in Section \ref{sec: Bdiff}, we can readily construct an element $U_k(y) \in \partial_B {\rm Prox}_{\sigma_k q_{\mu,c}}(x^k - \sigma_k A^\top y)$, which lies within the Clarke subdifferential $\partial {\rm Prox}_{\sigma_k q_{\mu,c}}(x^k - \sigma_k A^\top y)= {\rm conv} \partial_B {\rm Prox}_{\sigma_k q_{\mu,c}}(x^k - \sigma_k A^\top y)$, where ${\rm conv}$ denotes the convex hull. Moreover, if one can select some $H_k(y)\in  \partial {\rm Prox}_{\sigma_k f/\tau} \left(Ax^k + \frac{\sigma_k}{\tau} y\right) $, constructing an element of $\hat{\partial}^2 G_k (y)$ is mathematically straightforward as: 
\begin{equation}
	V_k(y):= - \frac{\sigma_k}{\tau}  H_k(y) - \sigma_k A U_k(y) A^\top \in \hat{\partial}^2 G_k(y). \label{eq: one_hessian}
\end{equation}

With the above construction, we can solve \eqref{eq:opt-condition-subprob} using a semismooth Newton method. The following theorem presents a key result that supports the implementation of this method and establishes its convergence properties.

\begin{theorem}\label{thm:ssn}
	Suppose the equation \eqref{eq:opt-condition-subprob} admits a unique solution, denoted as $\bar{y}$. Assume ${\rm Prox}_{f}(\cdot)$ is strongly semismooth with respect to $\partial {\rm Prox}_{ f}(\cdot)$, and each element in $\hat{\partial}^2 G_k(\bar{y})$ is negative definite. Let $\{y^j\}$ be generated by the semismooth Newton method as follows: at iteration $j$, compute
	\begin{equation*}
		y^{j+1} = y^{j} + \alpha_j d^j,
	\end{equation*}
	where
	\begin{itemize}[leftmargin=2em]
		\item $d^j$ approximately solves 
		\[V_k(y^j)[d^j]-\varepsilon_j d^j \approx - \nabla G_k(y^j) \mbox{ with } \varepsilon_j = 0.1 \min (0.1, \|\nabla G_k(y^{j})\|),\] such that the residual satisfies
		$$
		\|V_k(y^j)[d^j]   -\varepsilon_j d^j + \nabla G_k(y^{j})\| \leq \min(0.005, \|\nabla G_k(y^j)\|^{1+\delta}).
		$$
		\item $\alpha_j = 1/2^{m_j}$, with $m_j$ being the smallest nonnegative integer such that 
		$$
		G_k(y^j +  d^j/2^m) \geq G_k(y^j) + (10^{-4}/2^m) \langle \nabla G_k(y^j), d^j \rangle,
		$$
	\end{itemize}
	here $\delta \in (0,1]$ is a predefined parameter (in our implementation, we set $\delta= 0.5$). Then, we have that $\{y^{j}\}$ converges to $\bar{y}$. Meanwhile, for all $j\geq 1$,
	\[ 
	\|y^{j+1} - \bar{y}\| = {\cal O}
	(\|y^{j} - \bar{y}\|^{1+\delta}). 
	\]
\end{theorem}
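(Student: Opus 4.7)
My plan is to follow the classical two-stage template for inexact regularized semismooth Newton methods: first establish global convergence via the Armijo line search, then upgrade to local superlinear convergence via strong semismoothness. I would first verify that $\nabla G_k$ is strongly semismooth with respect to $\hat{\partial}^2 G_k$. Proposition~\ref{prop:wx_continuous} (and the analogous discussion in Section~\ref{sec: bdiff_c0}) shows that ${\rm Prox}_{\sigma_k \lambda q_{\mu,c}}(\cdot)$ is Lipschitz and piecewise affine, hence strongly semismooth with respect to its Clarke subdifferential. Combined with the assumed strong semismoothness of ${\rm Prox}_{f}(\cdot)$ and the sum/composition rule, the gradient $\nabla G_k$ in \eqref{eq:opt-condition-subprob} is strongly semismooth. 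Moreover, every $H_k \in \partial {\rm Prox}_{\sigma_k f/\tau}(\cdot)$ and every $U_k \in \partial {\rm Prox}_{\sigma_k \lambda q_{\mu,c}}(\cdot)$ is symmetric positive semidefinite (a standard property of Clarke Jacobians of proximal mappings of proper closed convex functions), so every $V_k(y) \in \hat{\partial}^2 G_k(y)$ satisfies $V_k(y) \preceq 0$, and hence $V_k(y^j) - \varepsilon_j I_m \prec 0$ whenever $\varepsilon_j > 0$, making the regularized Newton system uniquely solvable.

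For global convergence, the identity $(V_k(y^j) - \varepsilon_j I_m) d^j = -\nabla G_k(y^j) + r^j$ together with $\|r^j\| \le \min(0.005, \|\nabla G_k(y^j)\|^{1+\delta})$ and the spectral bounds on $(V_k(y^j) - \varepsilon_j I_m)^{-1}$ yield $\langle \nabla G_k(y^j), d^j \rangle > 0$ whenever $\nabla G_k(y^j) \ne 0$, so $d^j$ is an ascent direction and the Armijo line search terminates in finitely many steps. The sequence $\{G_k(y^j)\}$ is monotone and bounded above by the primal value $F_k(x^{k+1})$, and standard inexact-Newton arguments then give $\nabla G_k(y^j) \to 0$. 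Combined with the assumed uniqueness of $\bar y$ and the level boundedness of $G_k$ (inherited from the strong convexity of the primal subproblem), $\{y^j\}$ is bounded and must converge to $\bar y$.

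For the local superlinear rate, the negative-definiteness assumption on $\hat{\partial}^2 G_k(\bar y)$ together with the outer semicontinuity of the Clarke subdifferential imply that, for $y^j$ close to $\bar y$, every $V_k(y^j) - \varepsilon_j I_m$ is uniformly negative definite with uniformly bounded inverse. A standard Taylor-type argument for semismooth Newton methods then shows that the unit step $\alpha_j = 1$ is accepted by the Armijo condition for all sufficiently large $j$. Writing
\[
y^{j+1} - \bar y = (V_k(y^j) - \varepsilon_j I_m)^{-1}\bigl[V_k(y^j)(y^j - \bar y) - \nabla G_k(y^j) - \varepsilon_j (y^j - \bar y) + r^j \bigr],
\]
the strong semismoothness of $\nabla G_k$ gives $\|V_k(y^j)(y^j - \bar y) - \nabla G_k(y^j)\| = O(\|y^j - \bar y\|^2)$ (using $\nabla G_k(\bar y) = 0$), the Lipschitzness of $\nabla G_k$ yields $\varepsilon_j = O(\|y^j - \bar y\|)$ and $\|r^j\| = O(\|y^j - \bar y\|^{1+\delta})$, and the uniform invertibility produces the desired estimate $\|y^{j+1} - \bar y\| = O(\|y^j - \bar y\|^{1+\delta})$.

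The main technical obstacle is the balancing in this final step: three error sources — the semismoothness remainder $O(\|y^j - \bar y\|^2)$, the Tikhonov perturbation $O(\varepsilon_j \|y^j - \bar y\|)$, and the inexact-Newton residual $O(\|y^j - \bar y\|^{1+\delta})$ — must all be no worse than $O(\|y^j - \bar y\|^{1+\delta})$ simultaneously. The prescribed schedule $\varepsilon_j = 0.1 \min(0.1, \|\nabla G_k(y^j)\|)$ is tailored precisely so the regularization error becomes $O(\|y^j - \bar y\|^2)$ and therefore cannot spoil the rate; a parallel care is needed in verifying unit-step acceptance of the Armijo line search under this perturbed Newton scheme.
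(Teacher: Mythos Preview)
Your proposal is correct and follows essentially the same approach as the paper: both first argue that ${\rm Prox}_{\sigma_k\lambda q_{\mu,c}}(\cdot)$ is piecewise affine (hence strongly semismooth), deduce that $\nabla G_k$ is strongly semismooth with respect to $\hat\partial^2 G_k$, and then invoke the standard convergence theory for inexact regularized semismooth Newton methods with Armijo line search. The paper simply cites \cite[Theorem~3.5]{zhao2010newton} for the global and local convergence part that you spell out in detail; your expansion of the descent/ascent argument, the eventual acceptance of the unit step, and the three-term error decomposition is exactly what that reference contains.
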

\begin{proof}{Proof}
	According to Proposition \ref{prop:wx-exist}, for any $\nu>0$, the operator ${\rm Prox}_{\nu q_{\mu,c}}(\cdot)$ is piecewise affine. By \cite{sun2008lowner}, it is strongly semismooth with respect to $\partial {\rm Prox}_{\nu q_{\mu,c}}(\cdot)$. Then it can be seen that $\nabla G_k(\cdot)$ is strongly semismooth with respect to $\hat{\partial}^2 G_k(\cdot)$. The remaining result follows by arguments similar to those in \cite[Theorem 3.5]{zhao2010newton}. 
\end{proof}

As a side note, if all elements of $\hat{\partial}^2 G_k(y)$ are negative definite for every $y \in \mathbb{R}^m$, then $\varepsilon_j$ can be set to zero for all $j$.

\begin{remark}
	A broad class of standard loss functions satisfies the assumptions in Theorem~\ref{thm:ssn}. Two representative examples are as follows. First, if the loss function $f(\cdot)$ is twice continuously differentiable (e.g., the least squares or logistic loss), the proximal mapping ${\rm Prox}_{f}(\cdot)$ is smooth with a positive-definite gradient, as established in \cite[Proposition 4.1]{lin2024highly}. In this case, the assumptions in Theorem~\ref{thm:ssn} are satisfied. Second, for the square-root loss $f(z) = \|z - b\|$, suppose the regularity condition $A\bar{x} - b \neq 0$ holds, where $\bar{x}$ denotes the unique solution to the PPA subproblem \eqref{eq: ppa_subproblem}. From the KKT system \eqref{eq: sub_kkt}, for any optimal solution $\tilde{y}$ to \eqref{eq:opt-condition-subprob}, we have
	\begin{equation*}
		\begin{aligned}
		A\bar{x} - b &= {\rm Prox}_{\frac{\sigma_k}{\tau} \|\cdot - b\|} \left( Ax^k + \frac{\sigma_k}{\tau}\tilde{y} \right) - b \\
		&= \left( Ax^k + \frac{\sigma_k}{\tau} \tilde{y} - b \right) - \Pi_{\{\|\cdot\| \leq \sigma_k/\tau\}}\left( Ax^k + \frac{\sigma_k}{\tau} \tilde{y} - b \right).
		\end{aligned}
	\end{equation*}
	Since $A\bar{x} - b \neq 0$, it follows that $\|Ax^k + \frac{\sigma_k}{\tau} \tilde{y}  - b\| > \frac{\sigma_k}{\tau}$, which implies that the proximal mapping ${\rm Prox}_{\sigma_k f/\tau}(\cdot)$ is differentiable at $Ax^k + \frac{\sigma_k}{\tau}\tilde{y} $. This differentiability holds for all dual optimal solutions, which implies strong concavity of the dual objective at these points and forces the solutions to coincide. Thus, the dual optimal solution is unique, and the remaining assumptions in Theorem~\ref{thm:ssn} are also satisfied.
\end{remark}

\subsection{Implementation details}
In this subsection, we design a fast and memory-efficient implementation for solving the Newton system, the most computationally demanding component of the proposed double-loop algorithm. 

To illustrate the key ideas more clearly, we consider the least squares loss function, i.e., $f(z) = \|z - b\|^2/2$ with given $b \in \mathbb{R}^m$, as a representative example. In this case, by \cite[Proposition 4.1]{lin2024highly}, we have that for any $y\in \mathbb{R}^m$,
\begin{equation*}
	\partial {\rm Prox}_{\sigma_k f/\tau} \left( y\right) = \left\{ \nabla {\rm Prox}_{\sigma_k f/\tau} \left(y\right) 
	\right\} = \frac{1}{1+\sigma_k/\tau} I_m.
\end{equation*}
Substituting into \eqref{eq: one_hessian} and choosing $\bar{U} \in \partial {\rm Prox}_{\sigma_k q_{\mu,c}}(x^k \!-\! \sigma_k A^\top y)$, we have that
$
- \frac{1}{1+\tau/\sigma_k} I_m - \sigma_k A \bar{U} A^\top \in \hat{\partial}^2 G_k (y).$
The Newton system in Theorem \ref{thm:ssn} then becomes
\begin{equation}
	\left[ \left( \frac{1}{1+\tau/\sigma_k} +\varepsilon_j\right)I_m + \sigma_k A \bar{U} A^\top
	\right] d = \nabla G_k(y^j). \label{eq: newton}
\end{equation}

Based on the fact that $\partial_B {\rm Prox}_{\sigma_k q_{\mu,c}}(x^k \!-\! \sigma_k A^\top y)\subseteq \partial {\rm Prox}_{\sigma_k q_{\mu,c}}(x^k \!-\! \sigma_k A^\top y)$, and Theorems \ref{thm:partialBprox} and \ref{thm: c0Bdiff_prox}, it suffices to consider the case where either $c \neq 0$, or $c = 0$ and $E_L(x) > E_R(x)$; otherwise, setting $\bar{U} = 0_{n \times n}$ makes the Newton system \eqref{eq: newton} trivial to solve. When $c \neq 0$, or $c = 0$ with $E_L(x) > E_R(x)$, we can take
\[
\bar{U} = \operatorname{Diag}(\bar{u}) - \frac{1}{\bar{s}} \bar{\mu} \bar{\mu}^\top\in \partial_B {\rm Prox}_{\sigma_k q_{\mu,c}}(x^k - \sigma_k A^\top y^j) ,
\]
where $\bar{s} = \sum_{i \in \alpha(x^k - \sigma_k A^\top y^j)} \mu_i^2$, and 
\begin{equation*}
	\bar{u}_i = \left\{
	\begin{aligned}
		&1 && \mbox{if } i \in \alpha(x^k - \sigma_k A^\top y^j)\\
		&0 && \mbox{otherwise}
	\end{aligned}
	\right.,\ i \in [n],\qquad \bar{\mu} = \operatorname{Diag}(\bar{u}) \mu,
\end{equation*}
where the index set $\alpha(\cdot)$ is defined in \eqref{def: alpha_beta_gamma}.
Let $K = \alpha(x^k - \sigma_k A^\top y^j)$. Then, the matrix product $A \bar{U} A^\top$ can be computed as
\begin{equation}
	A \bar{U} A^\top = A_K A_K^\top - \frac{1}{\bar{s}} A_K \mu \mu^\top A_K^\top = A_K A_K^\top - \frac{1}{\bar{s}} (A_K \mu)(A_K \mu)^\top, \label{eq: compute_AUAT}
\end{equation}
where $A_K$ denotes the submatrix of $A$ formed by the columns indexed by $K$. Note that $|K|$ is typically much smaller than $n$ due to the sparsity-inducing property of the regularizer $q_{\mu,c}(\cdot)$. Hence, equation \eqref{eq: compute_AUAT} then implies that solving the linear system \eqref{eq: newton} requires ${\cal O}(m^2 |K|)$ operations. Moreover, when $|K|<m$, the cost can be further reduced to ${\cal O}(m |K|^2)$ suing the Sherman–Morrison-Woodbury formula.

\section{Numerical experiments}
\label{sec: exp}
In this section, we demonstrate the effectiveness and scalability of our proposed double-loop algorithm, which leverages the characterization on the B-subdifferential of the affine-constrained $\ell_1$ regularizer
in Section~\ref{sec: Bdiff}. We evaluate the algorithm on two representative applications: microbiome compositional data analysis and sparse subspace clustering. Our experiments also include comparisons with state-of-the-art solvers, highlighting the advantages of the proposed approach.

Our algorithm is implemented in {\sc Matlab}. All experiments were conducted on an Apple M3 system running macOS (version 15.3.1) with 24 GB of RAM.

\subsection{Microbiome compositional data analysis}
We apply our double-loop algorithm to identify key bacterial taxa in the human oral microbiome, and benchmark its performance against existing solvers. 

We downloaded the dataset corresponding to Study ID 14375 from the ORIGINS study  (\url{https://qiita.ucsd.edu/study/description/14375}). The dataset contains microbiome profiles represented as Operational Taxonomic Units (OTUs), which we use as features to predict each sample’s BMI. Each OTU  corresponds to a distinct bacterial species, with counts reflecting its observed abundance within a sample, thereby capturing the microbial composition. After excluding samples with missing BMI data, the final dataset comprises 932 samples and 209,356 OTUs. To model the compositional microbiome data using a log-contrast approach, we first replace zero counts with a small pseudo-count of 0.5. Each sample’s OTU counts are then normalized by its total count and log-transformed for analysis.

To demonstrate the flexibility and effectiveness of our algorithm, we consider two tasks: (1) predicting continuous BMI values via model \eqref{eq: micro_ls}, and (2) classifying samples as above or below the mean BMI using model \eqref{eq: micro_logistic}.

\subsubsection{Regression analysis}
To test the performance of our proposed algorithm for solving \eqref{eq: micro_ls}, we benchmark it against \texttt{SparseReg} (\url{https://github.com/Hua-Zhou/SparseReg}), a state-of-the-art {\sc Matlab} solver for $\ell_1$-regularized least squares problems with linear constraints \cite{gaines2018algorithms}. \texttt{SparseReg} offers three algorithmic options: a quadratic programming approach, an ADMM-based solver, and a path-following algorithm. According to \cite{gaines2018algorithms}, the quadratic programming method yields the poorest performance and is therefore excluded from our comparison. Instead, we compare our algorithm with both ADMM and path-following algorithms, under experimental settings tailored to each method.

\begin{figure}[H]
	{\begin{subfigure}[b]{0.5\textwidth}
			\centering
			\includegraphics[width=\textwidth]{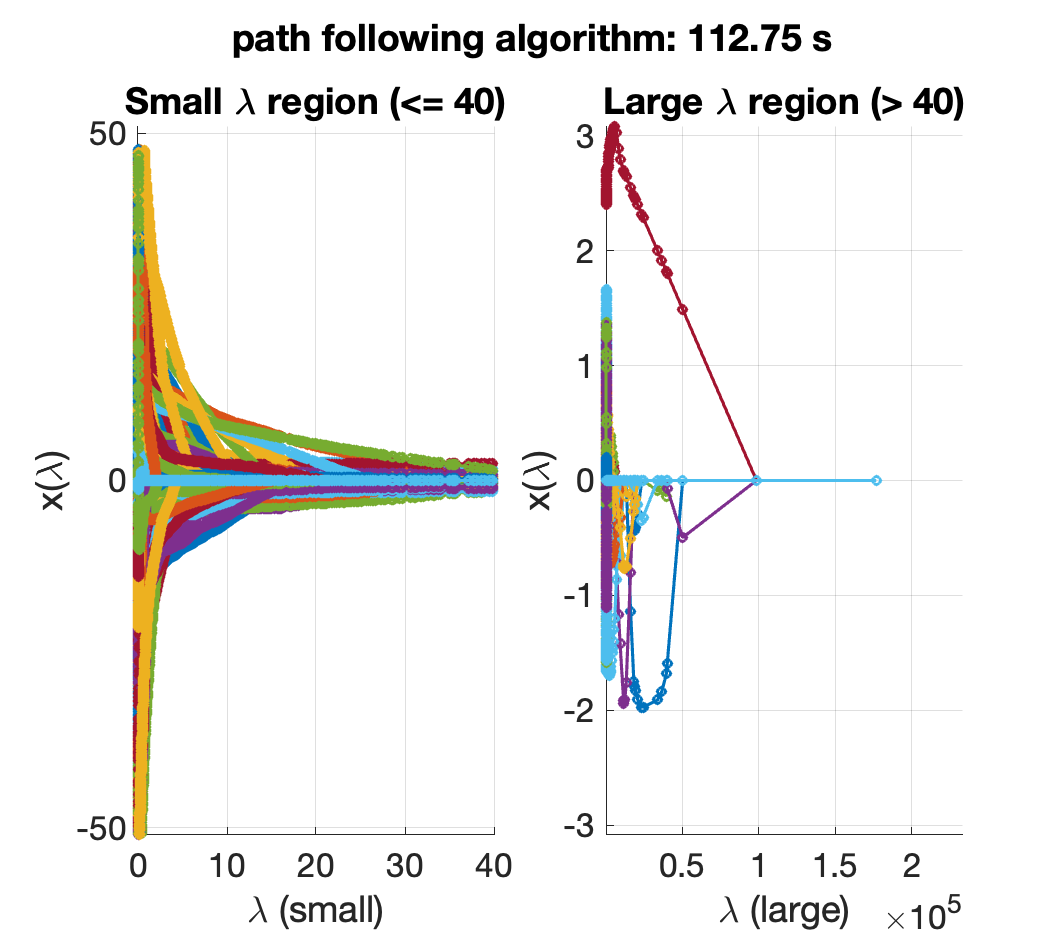}
		\end{subfigure}
		\begin{subfigure}[b]{0.5\textwidth}
			\centering
			\includegraphics[width=\textwidth]{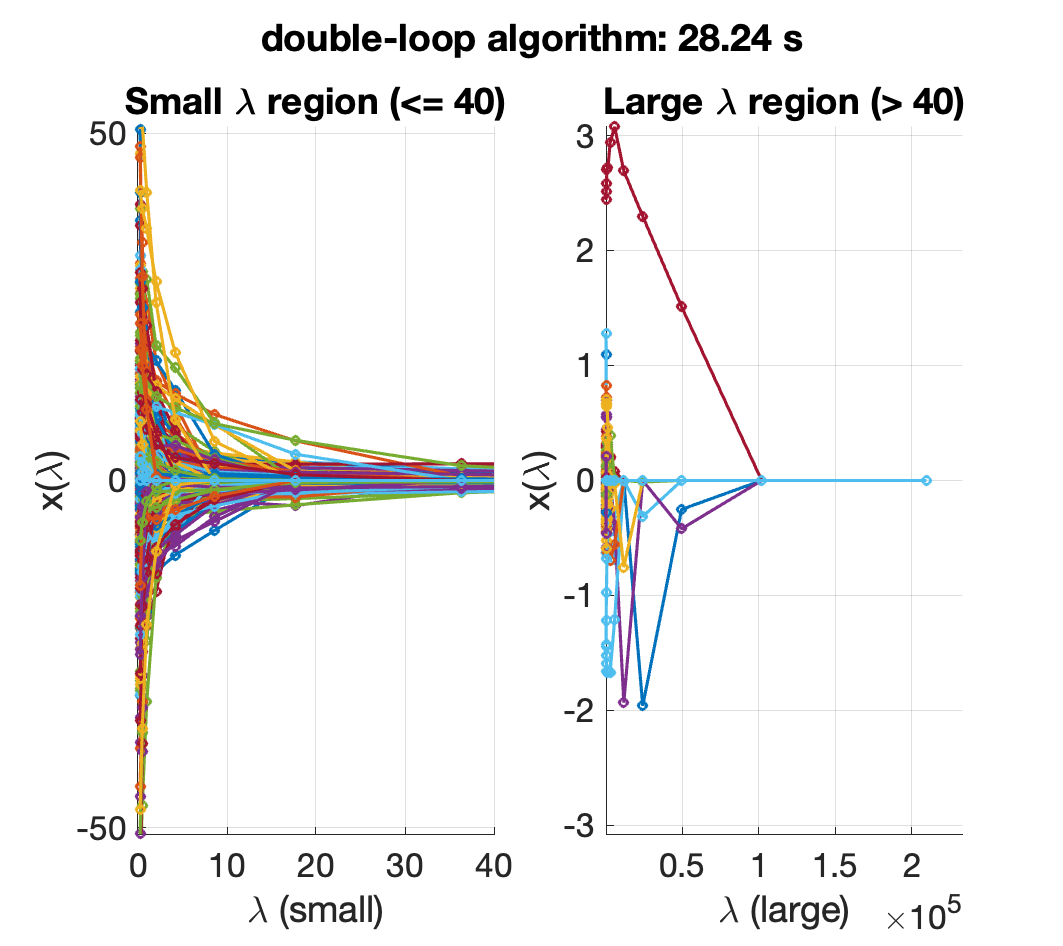}
	\end{subfigure}}
	\caption{Comparison of path generation between our algorithm and \texttt{SparseReg}'s path following algorithm on microbiome compositional regression $(m=932,n=1000)$.}
	\label{fig: micro_path_ppa_1000}
\end{figure}

We first compare our algorithm with the path-following solver, which was reported to outperform the other two methods in \cite{gaines2018algorithms}. This solver automatically generates a sequence of $\lambda$ values for \eqref{eq: micro_ls} and computes solutions by tracking  solution path events. In contrast, our algorithm constructs the solution path by using an explicitly specified $\lambda$ sequence. In our experiments, we set $\lambda = \varrho \|A^\top b\|$ with $\varrho$ decreasing from $0.9$ to $10^{-6}$ over 20 points equally spaced on the $\log_{10}$ scale, roughly matching the range of $\lambda$ values generated by the path-following solver. As is standard in path generation, we initialize each problem using the previous solution at the larger $\lambda$. Notably, our algorithm allows flexible user-defined $\lambda$ sequences, whereas \texttt{SparseReg}'s path-following solver does not. Preliminary experiments show that the path-following solver in \texttt{SparseReg} scales poorly on large instances, so we restrict the experiments to datasets with 1,000 and 3,000 OTUs; see Figures~\ref{fig: micro_path_ppa_1000} and~\ref{fig: micro_path_ppa_3000}. For both methods, we plot the coefficient trajectories along the paths. We split the display into a small $\lambda$ regime (with dense solutions) and a large $\lambda$ regime (with sparse solutions), each with its own axis scaling to keep both regimes clearly illustrated.
As shown in the two figures, our algorithm achieves a nearly identical solution path to that of \texttt{SparseReg}'s path-following algorithm, while requiring significantly less computation time.

\begin{figure}[H]
	{\begin{subfigure}[b]{0.5\textwidth}
		\centering
		\includegraphics[width=\textwidth]{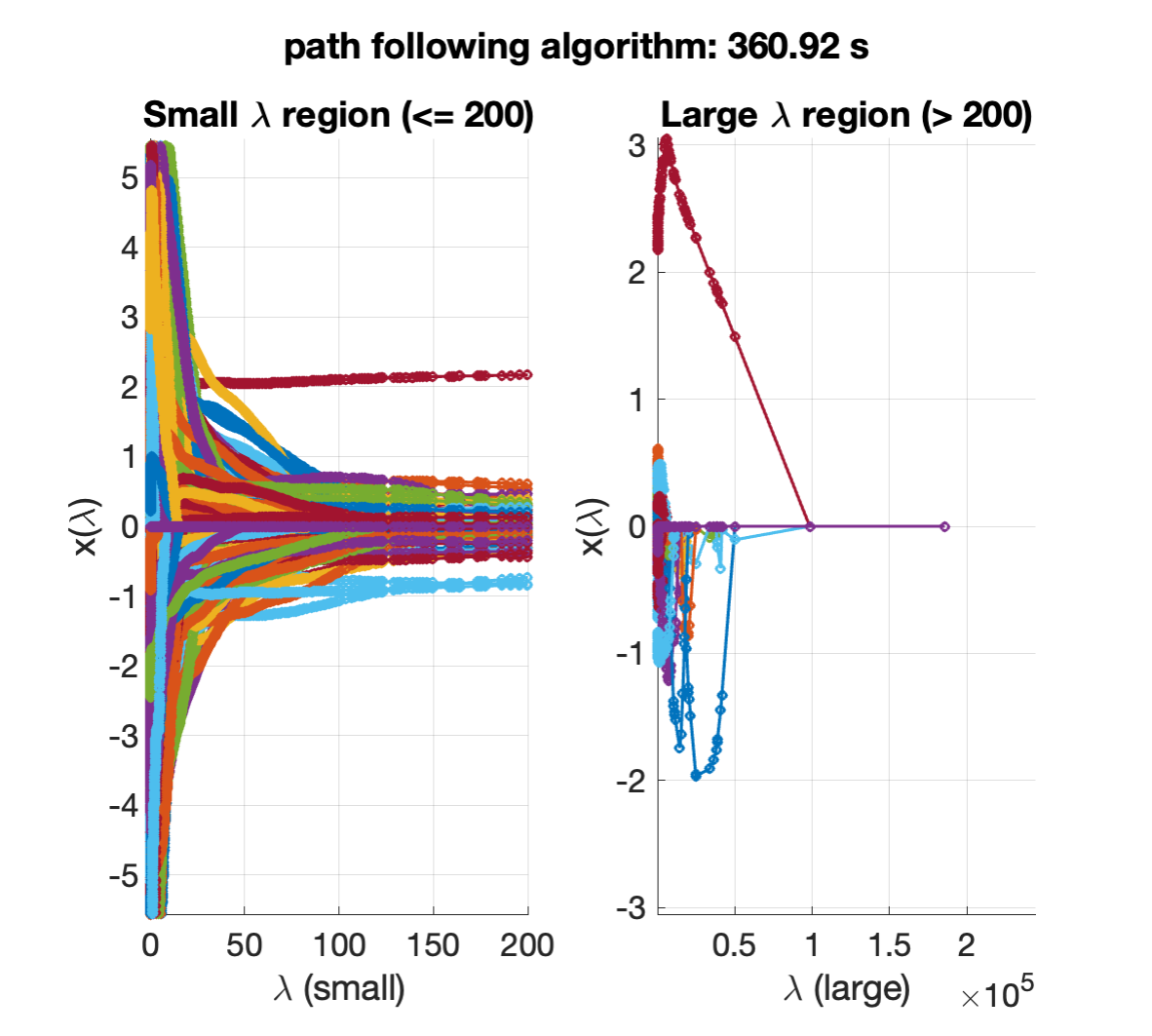}
	\end{subfigure}
	\begin{subfigure}[b]{0.5\textwidth}
		\centering
		\includegraphics[width=\textwidth]{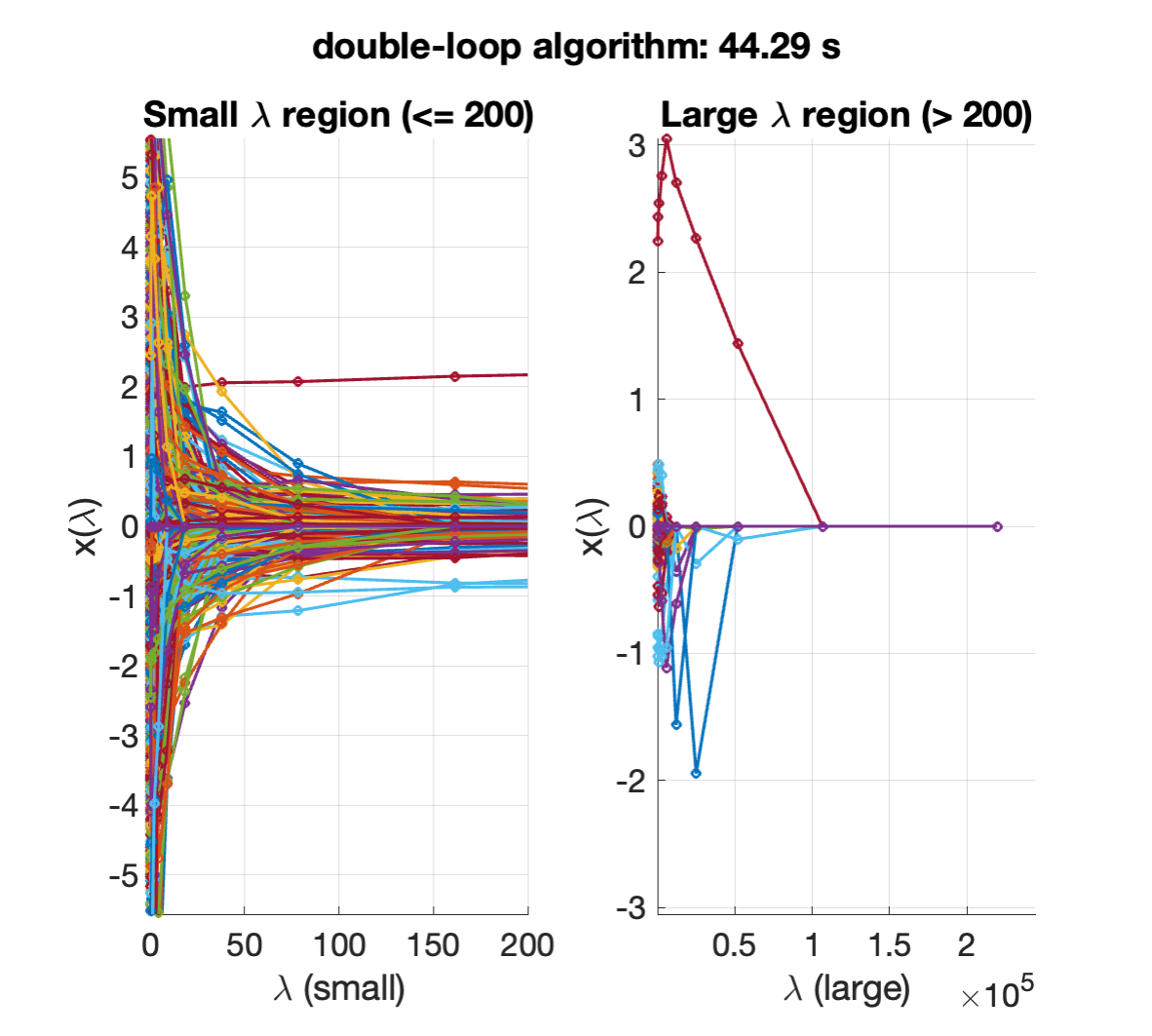}
	\end{subfigure}}
	\caption{Comparison of  path generation between our algorithm and \texttt{SparseReg}'s path following algorithm on microbiome compositional regression $(m=932,n=3000)$.}
	\label{fig: micro_path_ppa_3000}
\end{figure}

Next, we compare our algorithm with the ADMM solver in \texttt{SparseReg}. Similar to our approach, ADMM requires a pre-specified sequence of $\lambda$ values to generate solution paths for the microbiome compositional regression problem \eqref{eq: micro_ls}. Based on our experiments, \texttt{SparseReg}'s ADMM solver fails to solve problem \eqref{eq: micro_ls} for small $\lambda$ values and does not scale to large problem sizes. To better visualize and compare the performance of both methods, we restrict the range of $\lambda$ to $\lambda = \varrho \|A^\top b\|$, where $\varrho$ decreases from $0.9$ down to $10^{-4}$ for 1,000 OTUs case and down to $10^{-3}$ for 3,000 OTUs case, using 10 logarithmically spaced grid points. The runtime comparison is shown in Figure~\ref{fig: micro_admm_ppa}. In both cases, our algorithm runs significantly faster than ADMM. Specifically, on each dataset, it computes the full solution path within 10 seconds, whereas ADMM takes at least 20 seconds to solve a single subproblem.

\begin{figure}[H]
	{\begin{subfigure}[b]{0.5\textwidth}
			\centering
			\includegraphics[width=\textwidth]{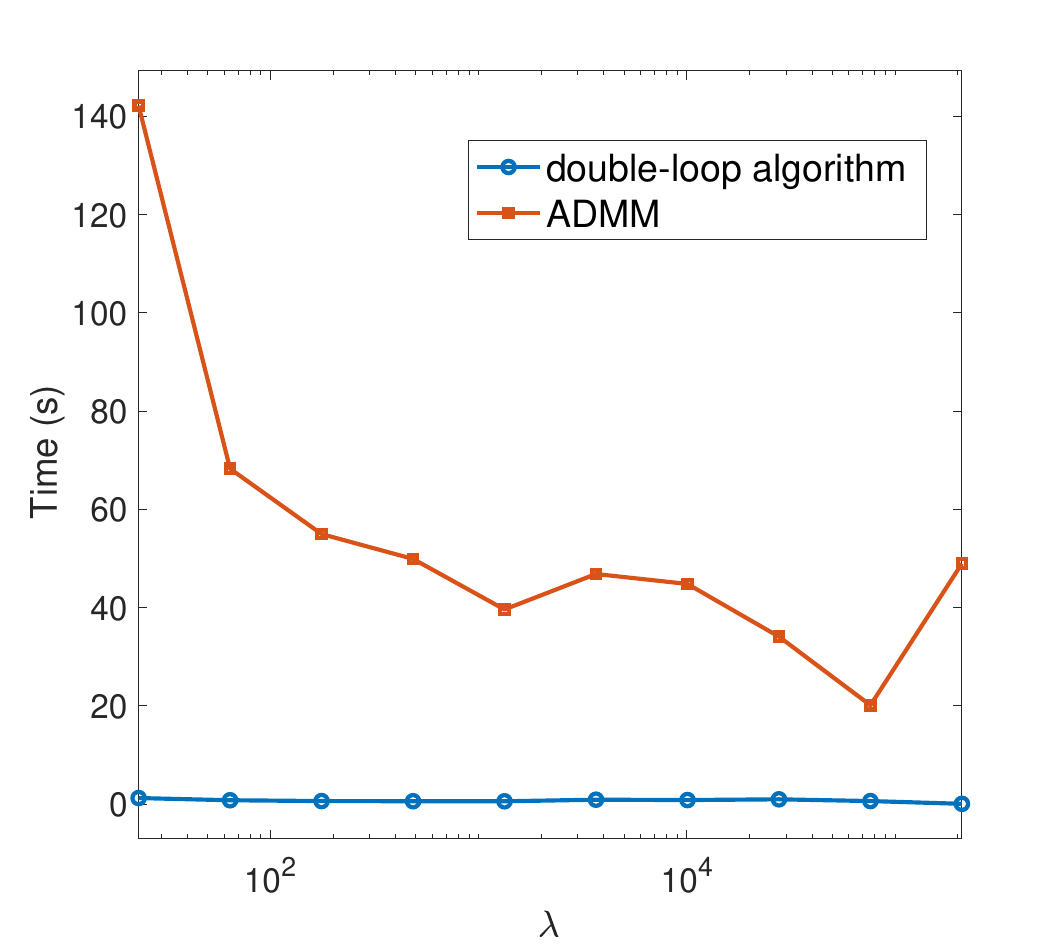}
			\caption{ $m=932,n=1000$.}
		\end{subfigure}
		\begin{subfigure}[b]{0.5\textwidth}
			\centering
			\includegraphics[width=\textwidth]{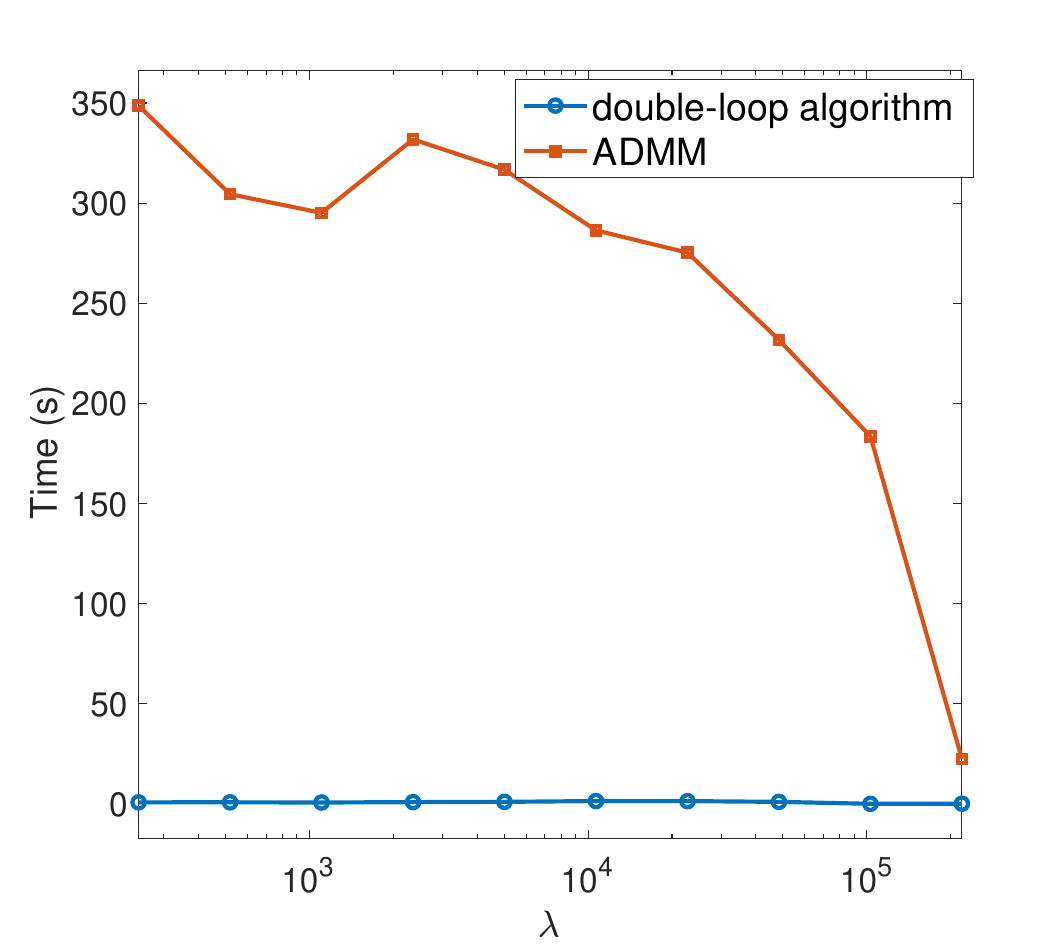}
			\caption{ $m=932,n=3000$.}
	\end{subfigure}}
	\caption{Runtime comparison of path generation between our algorithm and \texttt{SparseReg}'s ADMM on microbiome compositional regression with varying sizes. }
	\label{fig: micro_admm_ppa}
\end{figure}

\begin{figure}[H]
	\centering
	{\includegraphics[width=0.5\textwidth]{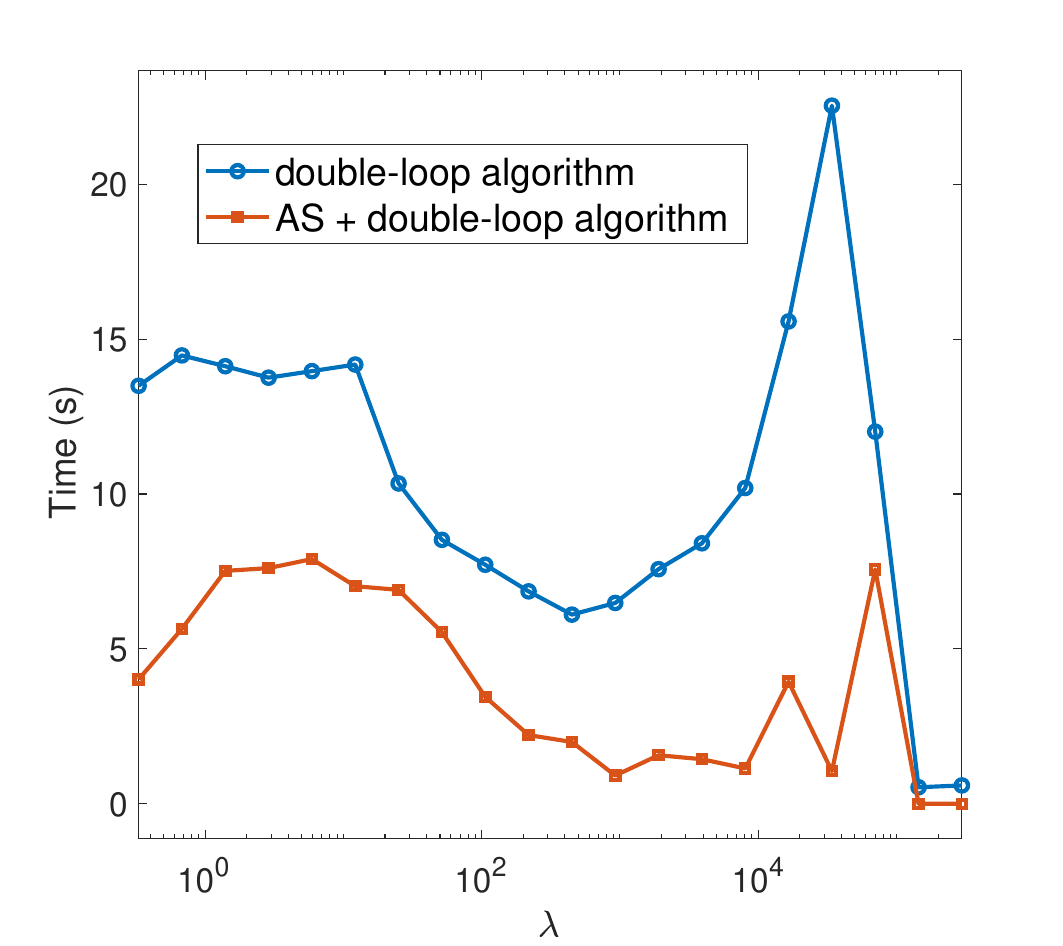}}
	\caption{Runtime of our algorithm with or without AS, for path generation on microbiome compositional regression $(m=932,n=209,356)$.}\label{fig: micro_AS_PPA}
\end{figure}

It should be noted that, due to the limitations of the path-following and ADMM solvers in \texttt{SparseReg}, the above experiments are restricted to relatively small-scale problems. In contrast, our proposed double-loop algorithm is capable of handling large-scale datasets efficiently. We evaluate the performance of our algorithm on the full dataset, which consists of 932 samples and 209,356 OTUs. We set $\lambda = \varrho \|A^\top b\|$, where $\varrho$ ranges from $0.9$ down to $10^{-6}$, using 20 grid points equally spaced on the $\log_{10}$ scale. To further enhance efficiency, our double-loop algorithm can be combined with the adaptive sieving (AS) strategy~\cite{yuan2025adaptive}, a powerful dimension reduction technique for sparse optimization problems. Figure~\ref{fig: micro_AS_PPA} shows the performance of our algorithm on the full set, both with and without AS. As illustrated, our algorithm successfully solves the full-scale problem within a reasonable time, and the AS strategy further accelerates computation. These results demonstrate that our algorithm not only scales to large datasets, but also benefits from the AS strategy for high computational efficiency.

\subsubsection{Classification analysis}
Beyond regression problems, we further examine the performance of our algorithm for solving the microbiome compositional classification problem \eqref{eq: micro_logistic}. To provide a meaningful benchmark, we compare our algorithm with \texttt{ECLasso} (\url{https://github.com/lamttran/ECLasso}), a recently proposed state-of-the-art R package that is specifically designed to fit logistic regression models with a lasso penalty while incorporating linear constraints, via candidate subsets identified from the unconstrained lasso \cite{tran2024fast}.

Our preliminary tests indicate that \texttt{ECLasso} exhibits limited efficiency on datasets with a relatively large number of samples or features for this problem. Consequently, we restrict the comparison between two methods to a small dataset consisting of 50 samples and 60 OTUs. Figure \ref{fig: micro_logistic} summarizes the results. Both methods are evaluated using 20 $\lambda$ values sampled on a logarithmic scale between 5 and 0.15. As shown in the figure, our algorithm significantly outperforms \texttt{ECLasso} in computational efficiency while providing comparable solutions along the path. In particular, our algorithm computes the entire solution path in just around one second, whereas \texttt{ECLasso} requires more than 120 seconds. Although the two methods are implemented in different environments, with our algorithm in {\sc Matlab} and \texttt{ECLasso} in R, the substantial performance gap underscores the practical advantage of our approach. 

\begin{figure}[H]
	{\begin{subfigure}[b]{0.5\textwidth}
		\centering
		\includegraphics[width=\textwidth]{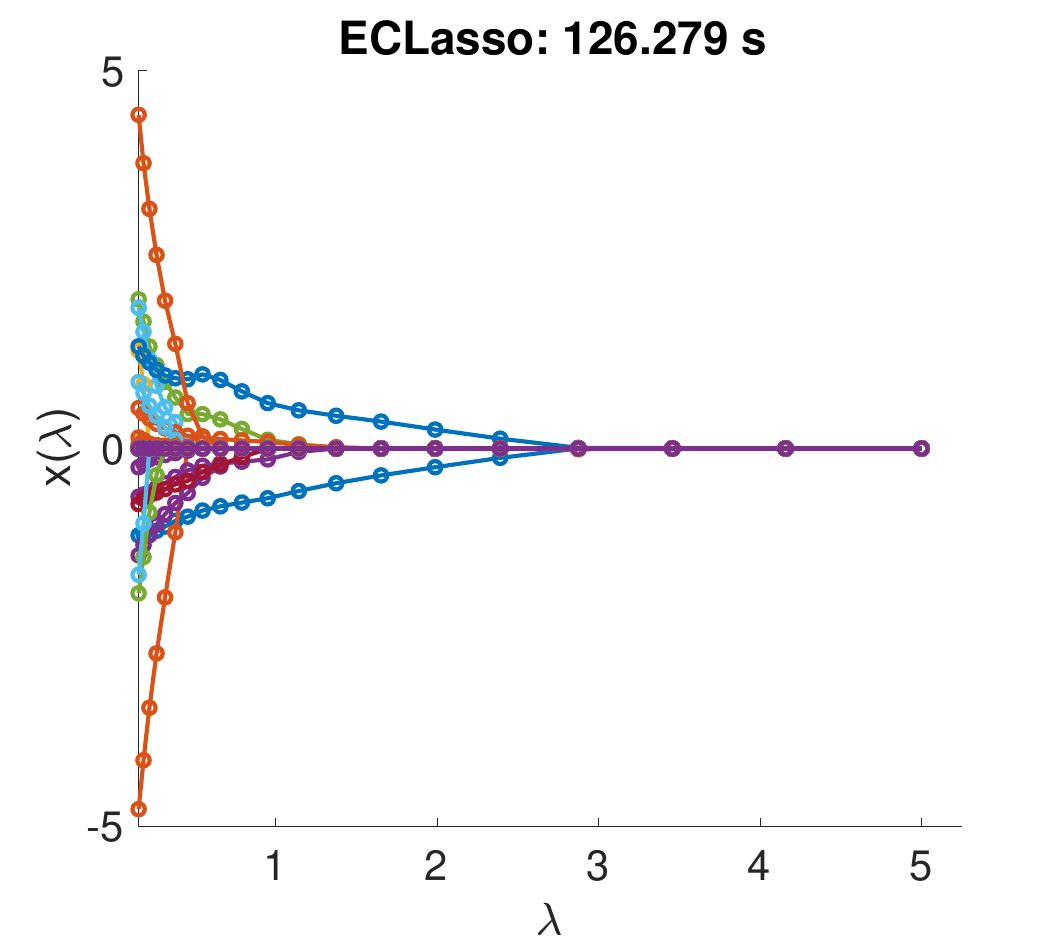}
	\end{subfigure}
	\begin{subfigure}[b]{0.5\textwidth}
		\centering
		\includegraphics[width=\textwidth]{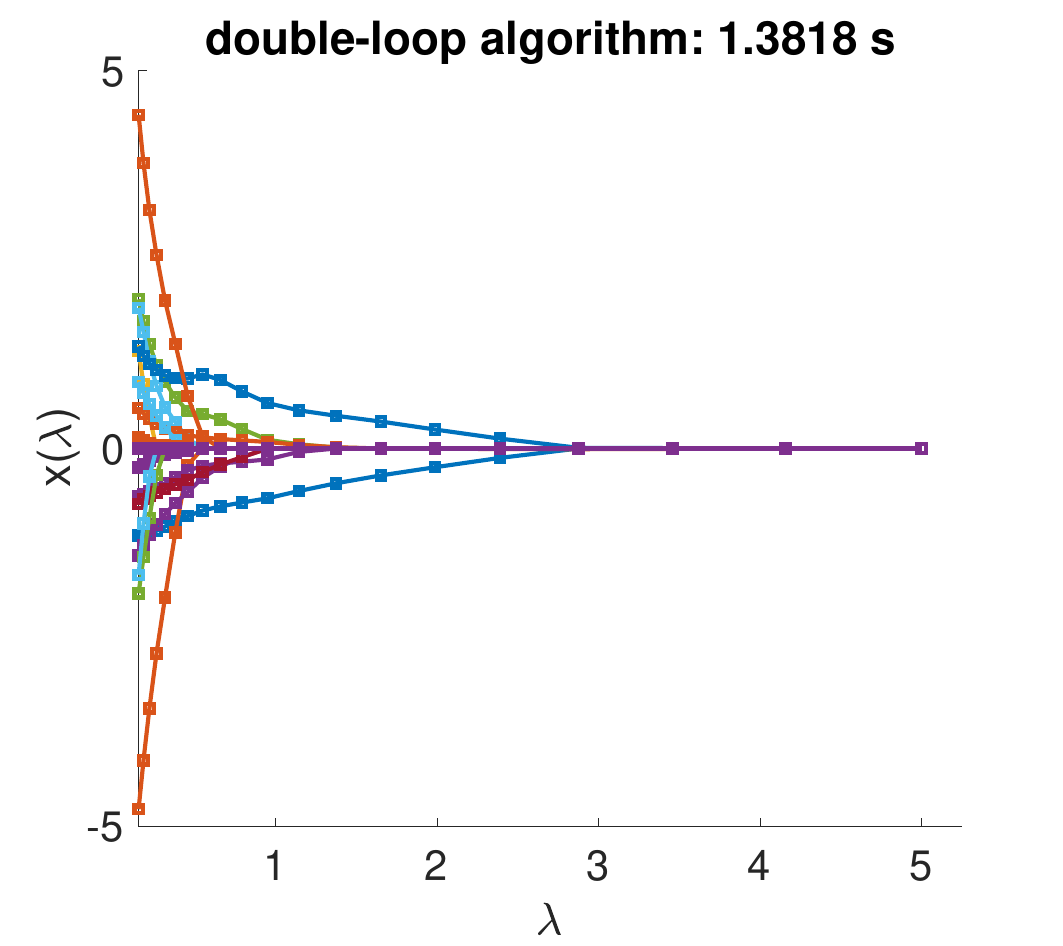}
	\end{subfigure}}
    \caption{Comparison of path generation between our algorithm and \texttt{ECLasso} on microbiome compositional classification $(m=50,n=60)$.}
    \label{fig: micro_logistic}
\end{figure}

To further demonstrate the scalability of our algorithm for solving problem \eqref{eq: micro_logistic} beyond the small-scale comparison with \texttt{ECLasso}, we evaluate its performance on larger datasets, including the full dataset with 932 samples and 209,356 OTUs. Table~\ref{tab:logistic_ppa_times} presents results under both the standard setting and its AS-enhanced variant. For each dataset, we generate a solution path by solving problem \eqref{eq: micro_logistic} at a sequence of 10 $\lambda$ values, where each $\lambda = \varrho \|A^\top b\|$ and the $\varrho$ values are logarithmically spaced from 0.5 down to $10^{-5}$. The results demonstrate that our algorithm scales effectively with both sample size and dimensionality. Even for the largest problem, which involves over 190 million parameters in the feature matrix, our method computes the full path in 19 minutes, and just 6 minutes when combined with AS.

\begin{table}[H]
	\caption{Performance of our algorithm for path generation on microbiome compositional classification across varying problem sizes, reported under the standard and AS-enhanced settings. Here, ``nnz" denotes the number of nonzeros of the solution at the smallest $\lambda$. Time is shown in (minutes:seconds).} \label{tab:logistic_ppa_times}
	\centering
	{\begin{tabular}{|c|c|c|cc|}
		\hline
		$m$ & $n$ & nnz & Standard Time & AS-Enhanced Time  \\
		\hline
		200 & 50000  & 128 & 00:54  & 00:23 \\
		200 & 100000 & 126 & 01:15  & 00:25 \\
		200 & 209356 & 134 & 02:16  & 00:35 \\
		500 & 50000  & 288 & 02:04  & 01:07 \\
		500 & 100000 & 309 & 03:05  & 01:17 \\
		500 & 209356 & 297 & 06:22  & 01:34 \\
		932 & 50000  & 565 & 07:03  & 05:13 \\
		932 & 100000 & 554 & 19:07  & 05:46 \\
		932 & 209356 & 573 & 18:57  & 05:49 \\
		\hline
	\end{tabular}}
\end{table}

\subsection{Sparse subspace clustering}
In this subsection, we evaluate the performance of our double-loop algorithm on sparse subspace clustering. As noted in the introduction, the original matrix formulation \eqref{eq: sub_cluster_matrix} can be decomposed into $n$ vectorized problems of the form \eqref{eq: sub_cluster_vector}. These can be solved individually or handled jointly by adapting our algorithm to the matrix form. We adopt the latter approach to avoid for-loops and improve implementation efficiency. Existing sparse subspace clustering methods often struggle with large sample sizes due to the need to solve the $n \times n$ optimization problem \eqref{eq: sub_cluster_matrix} and perform spectral clustering on large affinity matrices \cite{peng2013scalable,traganitis2017sketched,pourkamali2019large,abdolali2019scalable}. To address this issue, techniques such as random sketching \cite{traganitis2017sketched}, anchor point selection via hierarchical clustering \cite{abdolali2019scalable}, and landmark-based methods \cite{peng2013scalable,pourkamali2019large} have been proposed. A detailed discussion on these approaches is beyond the scope of this work. In our experiments, we follow the landmark-based approach \cite{peng2013scalable,pourkamali2019large}, solving \eqref{eq: sub_cluster_matrix} over a set of representative landmarks to effectively reduce the problem size. 

We conduct experiments on three real-world datasets  (\url{https://github.com/XLearning-SCU/2013-CVPR-SSSC/tree/master}): the Covertype dataset (581,012 samples, 54 features), the Pendigits dataset (10,992 samples, 16 features), and the Pokerhand dataset (1,000,000 samples, 10 features). We compare our double-loop algorithm against two existing sparse subspace clustering methods for solving \eqref{eq: sub_cluster_matrix}: an ADMM-based solver \cite{pourkamali2020efficient} and a proximal gradient method with Nesterov acceleration from the TFOCS package \cite{becker2011templates,pourkamali2020efficient}. Both baselines are publicly available with core routines implemented in {\sc Matlab} (\url{https://github.com/stephenbeckr/SSC}). 

Figure~\ref{fig:comparison_pokerhand} compares the three methods on Pokerhand dataset, using landmark sizes of 300 and 500, with $\lambda = 10^{-4}$ as recommended in \cite{peng2013scalable}. Both our double-loop algorithm and TFOCS are theoretically guaranteed to maintain feasibility, and in practice exhibit near-feasibility throughout the iterations. In contrast, ADMM begins with significant infeasibility, which diminishes slowly over iterations but remains non-negligible. To ensure a fair comparison, we report not only the objective values against computational time but also the feasibility of ADMM. As can be seen in the figure, our algorithm consistently achieves lower objective values in less time across both landmark sizes. While ADMM produces comparable objective values in the 500-landmark case, it suffers from poor constraint satisfaction, failing to meet $X^\top e = e$.

\begin{figure}[H]
	{\begin{subfigure}[b]{0.5\textwidth}
		\centering
		\includegraphics[width=\textwidth]{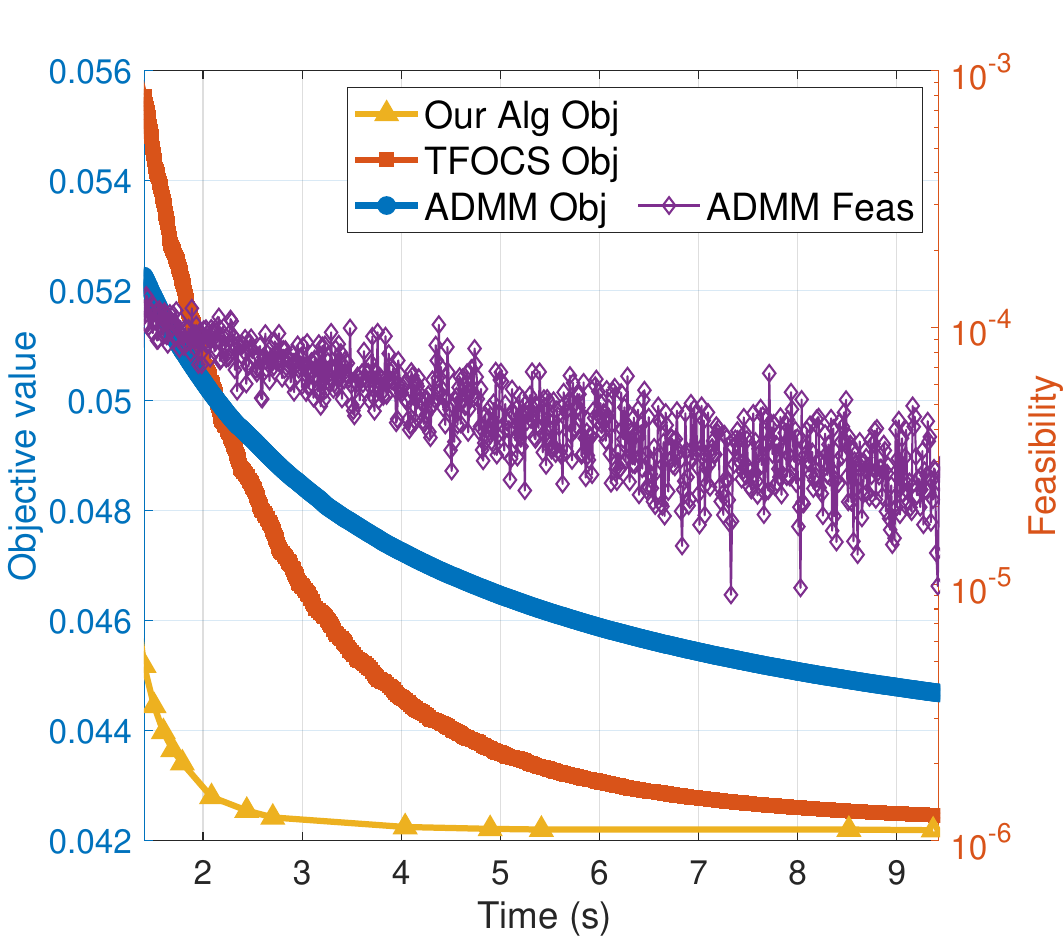}
	\end{subfigure}
	\begin{subfigure}[b]{0.5\textwidth}
		\centering
		\includegraphics[width=\textwidth]{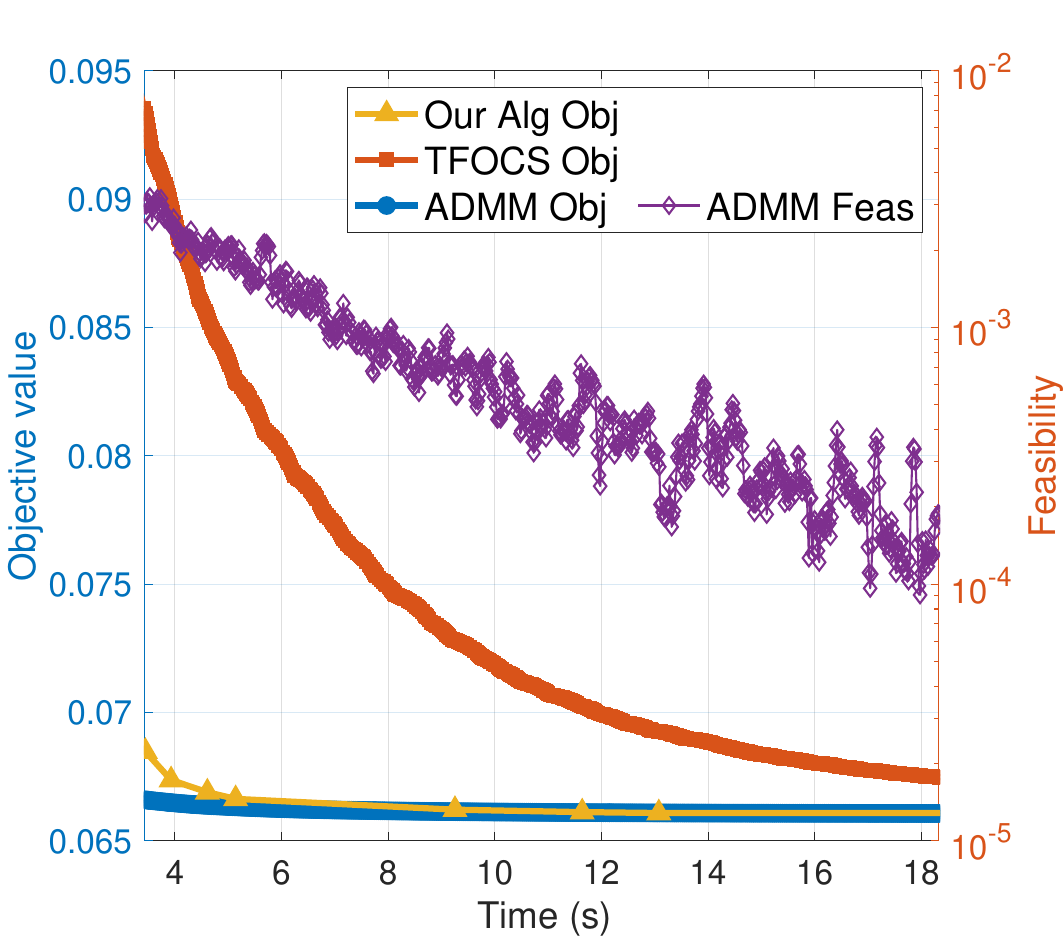}
	\end{subfigure}}
\caption{Comparison among our double-loop algorithm, TFOCS, and ADMM for sparse subspace clustering on Pokerhand dataset, with landmark size $300$ (Left) and $500$ (Right). Objective values over time are shown for all methods; constraint violation is reported only for ADMM, as the others maintain near-feasibility throughout.}
\label{fig:comparison_pokerhand}
\end{figure}

\begin{table}[H]
	\caption{Comparison of (a) our double-loop algorithm, (b) ADMM, and (c) TFOCS for sparse subspace clustering on Covtype dataset, using varying landmark sizes and $\lambda$ values. ``Normalized Obj." denotes the relative objective difference, computed as $(\mbox{objective-ours})/\mbox{ours}$. ``Feasibility" measures constraint violation as $\|X^\top e - e\|_F$.}
	\label{table: covtype}
	{\begin{tabular}{|c|c|c @{\ $|$\ } c @{\ $|$\ } c|c @{\ $|$\ } c @{\ $|$\ } c|c @{\ $|$\ } c @{\ $|$\ } c|}
			\hline
			Prob. & & \multicolumn{3}{c|}{Time (mm:ss)} & \multicolumn{3}{c|}{Normalized Obj.} & \multicolumn{3}{c|}{Feasibility} \\
			\hline
			$(m,n)$ & $\lambda$ & a & b & c & a & b & c & a & b & c \\
			\hline
			\multirow{3}{*}{(54, 200)} 
			& 1e-3 & 00:06 & 02:00 & 00:59 & 0 & 2.01e-5 & 3.06e-6 & 7.37e-12 & 7.28e-7 & 3.38e-14 \\
			& 1e-4 & 00:08 & 02:02 & 01:01 & 0 & 1.24e-2 & 4.40e-5 & 4.94e-12 & 1.37e-6 & 2.61e-14 \\
			& 1e-5 & 00:14 & 02:33 & 01:15 & 0 & 1.09e-1 & 5.86e-4 & 5.74e-13 & 8.53e-7 & 2.42e-14 \\
			\hline
			\multirow{3}{*}{(54, 400)} 
			& 1e-3 & 00:42 & 04:15 & 03:35 & 0 & 7.42e-4 & 3.31e-6 & 1.14e-11 & 4.88e-6 & 3.79e-14 \\
			& 1e-4 & 00:41 & 03:28 & 03:09 &0  & 2.08e-5 & 5.61e-5 & 5.43e-12 & 7.67e-7 & 3.52e-14 \\
			& 1e-5 & 00:48 & 04:30 & 03:34 & 0 & 4.25e-3 & 7.82e-4 & 1.16e-12 & 4.06e-6 & 4.55e-14 \\
			\hline
			\multirow{3}{*}{(54, 600)} 
			& 1e-3 & 01:58 & 09:20 & 07:41 & 0 & 7.19e-5 & 3.99e-6 & 4.66e-12 & 8.01e-6 & 4.46e-14 \\
			& 1e-4 & 02:14 & 09:33 & 07:34 & 0& 3.83e-5 & 6.67e-5 & 1.07e-11 & 3.00e-5 & 6.35e-14 \\
			& 1e-5 & 03:27 & 09:41 & 06:57 & 0 & 1.68e-5 & 1.03e-3 & 2.15e-12 & 1.04e-5 & 5.09e-14 \\
			\hline
			\multirow{3}{*}{(54, 800)} 
			& 1e-3 & 03:57 & 14:28 & 13:32 & 0 & 1.13e-4 & 4.28e-6 & 3.96e-12 & 1.41e-5 & 5.68e-14 \\
			& 1e-4 & 03:55 & 15:07 & 13:33 & 0 & 4.90e-5 & 7.32e-5 & 7.20e-12 & 2.50e-5 & 5.74e-14 \\
			& 1e-5 & 05:03 &16:28 & 12:39 & 0 & 1.83e-5 & 1.24e-3 & 2.20e-12 & 2.22e-5 & 4.81e-14 \\
			\hline
	\end{tabular}}
\end{table}

We further compare the performance of our double-loop algorithm, ADMM, and TFOCS on all three datasets for solving \eqref{eq: sub_cluster_matrix} under various landmark sizes and $\lambda$ values. The results are summarized in Tables~\ref{table: covtype}--\ref{table: pokerhand}. In the tables, $m$ denotes the feature dimension of the data, and $n$ refers to the number of selected landmarks used in the clustering formulation \eqref{eq: sub_cluster_matrix}; the normalized objective reflects the relative gap between each method’s objective value and that of our algorithm, while feasibility measures constraint violation as $\|X^\top e - e\|_F$. As shown, our algorithm consistently achieves the lowest objective values in the shortest time while maintaining acceptable feasibility, highlighting its efficiency and robustness compared to existing methods.

\begin{table}[H]
	\caption{Comparison of (a) our double-loop algorithm, (b) ADMM, and (c) TFOCS for sparse subspace clustering on Pendigits dataset.}
	\label{table: pendigits}
	{\begin{tabular}{|c|c|c @{\ $|$\ } c @{\ $|$\ } c|c @{\ $|$\ } c @{\ $|$\ } c|c @{\ $|$\ } c @{\ $|$\ } c|}
		\hline
		Prob.  & & \multicolumn{3}{c|}{Time (mm:ss)} & \multicolumn{3}{c|}{Normalized Obj.} & \multicolumn{3}{c|}{Feasibility} \\
		\hline
		$(m,n)$ & $\lambda$ & a & b & c & a & b & c & a & b & c \\
		\hline
		\multirow{3}{*}{(16, 200)} 
		& 1e-3 & 00:07 & 01:38 & 00:56 & 0 & 4.56e-6 & 5.30e-6 & 3.64e-12 & 2.13e-7 & 3.68e-14 \\
		& 1e-4 & 00:09 & 01:37 & 00:57 & 0 & 8.48e-7 & 6.69e-5 & 2.80e-12 & 2.88e-8 & 3.57e-14 \\
		& 1e-5 & 00:12 & 01:40 & 00:58 & 0 & 5.87e-2 & 7.54e-4 & 1.26e-12 & 8.10e-7 & 3.49e-14 \\
		\hline
		\multirow{3}{*}{(16, 400)} 
		& 1e-3 & 00:26 & 02:52 & 03:25 & 0 & 1.52e-5 & 6.86e-6 & 6.05e-12 & 3.94e-7 & 4.98e-14 \\
		& 1e-4 & 00:43 & 03:02 & 03:21 & 0 & 6.18e-6 & 8.41e-5 & 8.66e-12 & 6.06e-7 & 4.12e-14 \\
		& 1e-5 & 01:02 & 03:05 & 03:21 & 0 & 7.00e-2 & 1.11e-3 & 8.75e-13 & 1.27e-6 & 3.04e-14 \\
		\hline
		\multirow{3}{*}{(16, 600)} 
		& 1e-3 & 01:36 & 05:33 & 06:29 & 0 & 7.29e-5 & 7.67e-6 & 6.05e-12 & 2.99e-6 & 4.54e-14 \\
		& 1e-4 & 02:18 & 06:11 & 07:06 & 0 & 6.62e-6 & 9.41e-5 & 8.86e-12 & 1.63e-6 & 6.65e-14 \\
		& 1e-5 & 03:30 & 05:05 & 06:11 & 0 & -8.46e-10 & 1.48e-3 & 1.66e-12 & 1.02e-5 & 3.71e-14 \\
		\hline
		\multirow{3}{*}{(16, 800)} 
		& 1e-3 & 02:09 & 07:05 & 10:29 & 0 & 6.54e-6 & 8.02e-6 & 1.32e-11 & 5.91e-6 & 5.30e-14 \\
		& 1e-4 & 04:23 & 09:51 & 10:59 & 0 & 6.67e-6 & 1.04e-4 & 3.79e-12 & 1.23e-5 & 5.00e-14 \\
		& 1e-5 & 06:37 & 08:55 & 10:59 & 0 & 4.45e-6 & 1.66e-3 & 1.23e-12 & 2.28e-5 & 5.18e-14 \\
		\hline
	\end{tabular}}
\end{table}

\begin{table}[H]
	\caption{Comparison of (a) our double-loop algorithm, (b) ADMM, and (c) TFOCS for sparse subspace clustering on Pokerhand dataset. }
	\label{table: pokerhand}
	{\begin{tabular}{|c|c|c @{\ $|$\ } c @{\ $|$\ } c|c @{\ $|$\ } c @{\ $|$\ } c|c @{\ $|$\ } c @{\ $|$\ } c|}
		\hline
		Prob. & & \multicolumn{3}{c|}{Time (mm:ss)} & \multicolumn{3}{c|}{Normalized Obj.} & \multicolumn{3}{c|}{Feasibility} \\
		\hline
		$(m,n)$ & $\lambda$ & a & b & c & a & b & c & a & b & c \\
		\hline
		\multirow{3}{*}{(10, 200)} 
		& 1e-3 & 00:04 & 01:47 & 00:53 & 0 & 5.73e-5 & 4.47e-6 & 9.01e-12 & 9.47e-8 & 2.73e-14 \\
		& 1e-4 & 00:07 & 01:43 & 00:55 & 0 & 2.41e-2 & 4.59e-5 & 4.90e-12 & 2.18e-6 & 2.54e-14 \\
		& 1e-5 & 00:08 & 01:40 & 00:54 & 0 & 1.02e-1 & 4.72e-4 & 5.16e-13 & 6.27e-8 & 2.66e-14 \\
		\hline
		\multirow{3}{*}{(10, 400)} 
		& 1e-3 & 00:15 & 02:20 & 02:42 & 0 & 1.02e-5 & 5.25e-6 & 8.43e-12 & 1.93e-6 & 3.11e-14 \\
		& 1e-4 & 00:26 & 02:22 & 02:44 & 0 & 1.33e-2 & 5.47e-5 & 1.93e-12 & 4.14e-6 & 3.31e-14 \\
		& 1e-5 & 00:32 & 02:25 & 02:47 & 0 & 1.87e-3 & 6.55e-4 & 1.07e-12 & 1.80e-5 & 4.66e-14 \\
		\hline
		\multirow{3}{*}{(10, 600)} 
		& 1e-3 & 00:50 & 04:50 & 05:52 & 0 & 1.08e-5 & 5.20e-6 & 8.37e-12 & 1.93e-6 & 4.87e-14 \\
		& 1e-4 & 01:33 & 04:52 & 05:54 & 0 & 1.26e-5 & 6.45e-5 & 9.80e-12 & 4.05e-6 & 3.79e-14 \\
		& 1e-5 & 02:07 & 04:03 & 05:56 & 0 & -8.38e-10 & 7.82e-4 & 1.99e-12 & 2.52e-5 & 6.57e-14 \\
		\hline
		\multirow{3}{*}{(10, 800)} 
		& 1e-3 & 01:38 & 07:58 & 11:35 & 0 & 1.25e-5 & 5.85e-6 & 8.36e-12 & 2.17e-6 & 4.48e-14 \\
		& 1e-4 & 02:44 & 08:49 & 11:16 & 0 & 1.41e-5 & 7.04e-5 & 8.26e-12 & 1.75e-5 & 4.30e-14 \\
		& 1e-5 & 04:14 & 15:31 & 15:13 & 0 & 8.83e-7 & 9.46e-4 & 1.82e-12 & 4.39e-5 & 8.40e-14 \\
		\hline
	\end{tabular}}
\end{table}

\section{Conclusion}\label{sec: conclusion}
This work offers a characterization of the B-subdifferential of the proximal operator associated with affine-constrained $\ell_1$ regularizers, which enables the design of efficient second-order methods for optimization problems involving such regularizers. These results provide new insights into the variational behavior of nonsmooth constrained regularizers, and lead to algorithms that outperform existing solvers in both efficiency and solution quality across real-world applications
including affine-constrained lasso problem for microbiome compositional
data analysis.

\end{document}